\documentclass[11pt]{article}        

\usepackage{etoolbox}

\usepackage{amsthm,amssymb}
\usepackage[shortlabels]{enumitem}
\usepackage{graphicx}

\usepackage{amscd,amssymb}
\usepackage{amsmath}
\usepackage{tikz}
\usepackage{tikz-3dplot}
\usepackage{tikz-cd}
\usepackage{faktor}

\newtheorem  {theorem}                  {Theorem}
\newtheorem* {theorem*}                   {Theorem}

\newtheorem {lemma}[theorem] {Lemma}
\newtheorem {prop}[theorem]      {Proposition}
\newtheorem* {prop*}     {Proposition}

\newtheorem {corollary}[theorem]      {Corollary}

\theoremstyle{definition}
\newtheorem {defi}[theorem] {Definition}
\newtheorem {Remark} [theorem]         {Remark}

\newtheorem* {Example*}    {Example}

\def\R{\mathbb{R}}

\newcommand{\norm}[1]{\|#1\|}

\newcommand{\T}{\mathbb{T}}

\newcommand{\BP}{\mathbb{P}}
\newcommand{\BR}{\mathbb{R}}

\title{KAM splittings and equidistributed periodic orbits for stable hypersurfaces}
\author{Robert Cardona, Oliver Edtmair and Rohil Prasad}
\date{}

\newcommand{\Addresses}{
{
  \bigskip

{\sc \noindent Robert Cardona}

\noindent Departament de Matem\`atiques i Inform\`atica, Universitat de Barcelona, Gran Via de Les Corts Catalanes 585, 08007 Barcelona, Spain; Centre de Recerca Matemàtica, Campus de Bellaterra, Edifici C, 08193, Barcelona, Spain.

{\noindent  \em robert.cardona@ub.edu\/}

\bigskip
{\sc \noindent Oliver Edtmair}

\noindent Institute for Advanced Study, 1 Einstein Drive, Princeton, NJ 08540, USA.

{\noindent \em oedtmair@ias.edu\/}

\bigskip
{\sc \noindent Rohil Prasad}

{\noindent \em r0hil.pras4d@gmail.com\/}
}

  \medskip

}

\begin{document}
\maketitle

\begin{abstract}

We show that any stable hypersurface of a symplectic $4$-manifold, on which the cohomology class of the symplectic form restricts to a multiple of a rational class, can be $C^\infty$-approximated by (possibly unstable) hypersurfaces whose closed characteristics equidistribute. The cohomological condition is necessary due to a famous example of Herman. The proof combines KAM theory with recent quantitative closing lemmas for Reeb flows and area-preserving maps. As a further application, we prove that every geodesible volume-preserving vector field on a closed three-manifold can be $C^\infty$-approximated by volume-preserving vector fields with equidistributed periodic orbits.

\end{abstract}

\section{Introduction}

The existence and distribution of periodic orbits are among the central questions in Hamiltonian dynamics. A particularly influential problem in this direction is the $C^\infty$-closing lemma, along the lines of the tenth problem of
Smale's list \cite{Smale}, which asks whether periodic trajectories can be created by arbitrarily small smooth perturbations of a given dynamical system. For Hamiltonian flows on hypersurfaces of symplectic manifolds, Fish and Hofer introduced in \cite{FH} a $C^\infty$-closing property asserting that periodic characteristics become dense after a (certain type of) generic perturbation of the hypersurface. While this property concerns the density of closed characteristics, recent progress on these questions has shown that one often has a quantitative ergodic refinement of density, namely, generic equidistribution of periodic orbits. 

For instance, in dimension three, generic density and equidistribution of periodic orbits were established for Reeb flows on contact manifolds by Irie \cite{Ir,Ir2}. Generic density of orbits was proven for Hamiltonian diffeomorphisms of closed surfaces in \cite{AI} and more generally for symplectomorphisms of closed surfaces in \cite{EH,CGPZ,CGPPZ}. This was followed by equidistribution results for closed surfaces \cite{P}, compact surfaces with boundary \cite{PP}, and punctured surfaces \cite{Z}. These results show that, after a generic perturbation, one can find collections of positively weighted periodic orbits that approximate the ambient measure (either the contact volume or the area form). The purpose of this paper is to investigate this phenomenon, from Fish--Hofer's perspective of hypersurfaces in four-dimensional symplectic manifolds, for the general class of \textit{stable hypersurfaces}, introduced by Hofer and Zehnder \cite{HZ} (see also Cieliebak--Volkov \cite{CV}). For other results on the Hamiltonian dynamics of stable hypersurfaces, see for example \cite{HT,CR}.

\paragraph{Equidistributed orbits near rational stable hypersurfaces.} Let $(W,\Omega)$ be a symplectic four-manifold, and let $\mathcal{HS}(W)$ denote the space of smooth, closed, cooriented embedded hypersurfaces in $W$, equipped with the $C^\infty$ topology. A hypersurface $M\in \mathcal{HS}(W)$ has an induced Hamiltonian structure $\omega$ (a closed two-form of maximal rank), obtained by the pullback of $\Omega$ by the inclusion map of $M$ in $W$. The line field $\ker \omega$ induces a one-dimensional foliation on $M$ called the \emph{characteristic foliation}.

We say that $M$ has \emph{dense closed characteristics} if the union of the closed leaves of its characteristic foliation is dense in $M$. A quantitative refinement is to ask for \emph{equidistributed closed characteristics}: roughly speaking, finite collections of positively weighted closed characteristics that converge, in the sense of currents, to the Hamiltonian structure $\omega$. We refer to Section~\ref{ss:currents} for a precise definition.

\begin{defi}\label{def:localclosing}
A hypersurface $M$ satisfies the \emph{neighboring $C^\infty$-closing property} if there is a neighborhood $\mathcal{V}\subset \mathcal{HS}(W)$ of $M$ and a residual subset $\mathcal{A}\subset \mathcal{V}$ such that every $\tilde M\in\mathcal{A}$ has dense closed characteristics. It satisfies the \emph{neighboring equidistribution property} if there are such $\mathcal{V}$ and $\mathcal{A}$ for which every $\tilde M\in\mathcal{A}$ has equidistributed closed characteristics.
\end{defi}

These properties are intrinsic to the Hamiltonian structure induced on $M$. Indeed, Gotay's coisotropic neighborhood theorem \cite[Local Uniqueness Theorem]{Gotay} implies that the symplectic neighborhood of $M$ is determined, up to symplectomorphism, by $\omega$. Thus the properties do not depend on the ambient symplectic manifold, and we may also speak of a Hamiltonian structure satisfying them.

\begin{Remark}
Fish and Hofer's $C^\infty$-closing property \cite[Definition 1.10]{FH} requires that, for some symplectic form on $M\times\mathbb{R}$ restricting to $\omega$ along the zero section, generic graphs of functions $f\in C^\infty(M)$ have dense closed characteristics. The neighboring $C^\infty$-closing property is the corresponding local formulation, requiring this only for generic hypersurfaces in a neighborhood of $M$. We use the term ``neighboring'' because Fish and Hofer use ``local $C^\infty$-closing property'' for the closing property of the standard symplectic space.
\end{Remark}

A Hamiltonian structure $\omega$ on $M$ is called \emph{rational} if $[\omega]\in H^2(M;\mathbb{R})$ is a real multiple of a rational cohomology class. In this language, the neighboring equidistribution property is satisfied by Hamiltonian structures of contact type \cite{Ir2} and by rational Hamiltonian structures arising from symplectic mapping tori \cite{P}. Both are examples of stable hypersurfaces. A hypersurface is \emph{stable} if its induced Hamiltonian structure admits a stabilizing one-form; see Section~\ref{ss:HamSt}. The main result of this work is the following simultaneous extension of these two results.
\begin{theorem}\label{thm:main}
Let $M$ be a stable hypersurface in $(W,\Omega)$ such that the induced Hamiltonian structure is rational. Then, arbitrarily $C^\infty$-close to $M$, there exists a stable hypersurface that satisfies the neighboring equidistribution property.
\end{theorem}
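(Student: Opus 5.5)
We plan to reduce to two model situations where equidistribution is already known, contact type \cite{Ir2} and rational symplectic mapping tori \cite{P}, glued along invariant tori produced by KAM theory. The first step uses the structure theory of stable Hamiltonian structures in dimension three (Cieliebak--Volkov). After a $C^\infty$-small perturbation of $M$ within stable hypersurfaces, we arrange that the stabilizing pair $(\lambda,\omega)$ is \emph{nondegenerate} in the following sense: $M$ splits along finitely many invariant $2$-tori into regions $U_i$ of two kinds. On the first kind, $d\lambda = f\omega$ with $f$ nonzero, so the region is of contact type. On the second kind, $d\lambda=0$, and the characteristic flow is, after a small perturbation of $\lambda$ making it rational, a suspension of an area-preserving map of a compact surface with boundary. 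On the interface tori $d\lambda = f\omega$ with $f$ constant, and the flow is integrable (linear) on a neighborhood $T^2\times(-\epsilon,\epsilon)$. The rationality of $[\omega]$ is used in the flat regions: after the perturbation the closed one-form $\lambda$ has rational periods up to scaling, so it defines a fibration over $S^1$, and the return map is a rational symplectic map of the fiber.

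The second step is the \emph{KAM splitting}. On each integrable collar $T^2\times(-\epsilon,\epsilon)$ we perturb the hypersurface so that the frequency map (rotation number as a function of the transverse coordinate) is twisted, i.e.\ has nonvanishing derivative. KAM theory then says that for every sufficiently $C^\infty$-small further perturbation $\tilde M$ of this stable hypersurface, a positive-measure family of Diophantine invariant tori persists in each collar. These tori cut $\tilde M$ into pieces, each $C^\infty$-close to a piece of the unperturbed decomposition. Crucially, this holds for an entire open neighborhood $\mathcal{V}$, which is exactly what the neighboring property requires.

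The third step is to prove equidistribution on each piece for generic $\tilde M\in\mathcal V$, and then to glue.
\begin{itemize}
\item On a piece near a contact region bounded by KAM tori, we apply the quantitative closing lemma for Reeb flows on manifolds with torus boundary (ECH-type volume asymptotics, as in \cite{Ir2}). Although $\tilde M$ need not be stable globally, it is still a hypersurface, and we can use the $C^\infty$-closeness to find a local contact form.
\item On a piece near a flat region, we take a global surface of section transverse to the perturbed flow. Its return map is area-preserving on a compact surface with boundary and has rational flux. The equidistribution theorem for surfaces with boundary \cite{PP} (PFH spectral invariants) then gives equidistributed orbits for generic perturbations.
\item On the KAM tori themselves there are no periodic orbits, but the tori have measure that can be made small, or can be covered by periodic orbits of nearby rational tori.
\end{itemize}
A Baire argument combines these. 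For each piece and each rational test form, the set of $\tilde M$ admitting weighted orbit sets that approximate $\omega$ within $1/n$ is open and dense in $\mathcal V$. Intersecting over $n$ and over a countable dense set of test forms gives a residual $\mathcal A$. Weighted sums over the pieces, with weights proportional to their $\omega$-volumes, then converge to $\omega$ as currents.

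I expect the main obstacle to be the third step: showing that the perturbations used in the local closing lemmas are localized in a piece and do not destroy the splitting. More precisely, the difficulty is that the KAM tori bounding each piece move with $\tilde M$, so the closing arguments must be made uniform over the moving boundary. I would first try to handle this by choosing perturbations supported away from a fixed shrunken collar.
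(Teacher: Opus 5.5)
There is a genuine gap at exactly the step you flag as the main obstacle, and your suggested remedy (perturbations supported away from a fixed shrunken collar) does not close it. The theorems of Irie and Pirnapasov--Prasad, as stated, only give \emph{some} $C^\infty$-small perturbation of the whole piece, with no control over its support. For a contact piece this is a form $e^h\lambda$ on a closed manifold obtained by capping off the boundary tori; for a mapping torus it is a Hamiltonian that is merely locally constant on $\partial\Sigma$. Cutting such a perturbation off near the splitting tori can destroy the closed characteristics that make up the approximating current.

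The missing idea is to use stability and robustness of twist KAM tori together.
By Theorem~\ref{thm:KAMstable}, choose for each boundary torus $B$ of a piece $N_j$ an inner parallel twist KAM torus $S_B$. \emph{Before} perturbing, fix a cutoff $\psi_j$ supported in $\operatorname{Int}(N_j)$ and equal to $1$ on the region bounded by the $S_B$. Write the local perturbation as $\omega_j=\omega|_{N_j}+d\kappa_j$ with $\kappa_j$ small, and glue $\omega'=\omega+\sum_j d(\psi_j\kappa_j)$. By Theorem~\ref{thm:KAMrobust}, $\omega_j$ has invariant twist KAM tori near the $S_B$, bounding a region $\widetilde N_j$ on which $\omega'=\omega_j$. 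The orbits of the local current inside $\widetilde N_j$ therefore remain closed characteristics of $\omega'$, and the others are discarded.

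Discarding them, and cutting the test forms off near the splitting tori, is harmless because of a mass estimate. Include a framing $\lambda$ with $\int_M\lambda\wedge\omega=1$ as a test form, with the one-sided bound $\int_{c_j}\lambda<\int_{N_j}\lambda\wedge\omega_j+\varepsilon_1$ on each piece. Combined with the lower bound for $\int_{c'}\varphi\lambda$, where $\varphi$ vanishes near the splitting tori, this shows the glued current carries little mass there. Your remark that the tori have small measure or can be covered by orbits of nearby rational tori misplaces the issue: they have measure zero, and what must be ruled out is concentration of the approximating orbit sets near them.

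For the same reason the Baire argument should be global rather than piecewise, since the pieces move with $\tilde M$. Take the sets of hypersurfaces admitting a global current of \emph{nondegenerate} closed characteristics approximating $\omega$ on the first $k$ forms of a dense sequence. These are open by nondegeneracy. They are dense by the glued near-equidistribution statement, a further perturbation making the orbits nondegenerate, and Cardona's realization of exact perturbations of $\omega$ by nearby hypersurfaces.

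Two smaller points also need filling in. First, your Step~2 does not ensure that the complementary pieces are of contact type or admit global cross sections, which the local theorems require. In an integrable region $f$ is non-constant, so $\omega$ is not a priori the differential of a contact form there. The paper chooses finitely many twist tori in each integrable region so that on every intermediate slab the slope lies in an open semicircle; this gives a closed one-form positive on the characteristic direction, hence a cross section after making it rational. The outermost tori are placed inside the adjacent contact or fibered component. When the slope is constant on the overlap, that component is first enlarged, by $T^2$-averaging a contact primitive or by extending a closed one-form respectively. Second, Irie's theorem is for closed contact manifolds. For a contact piece with torus boundary, the paper caps off with solid tori using Giroux's result, applies Irie there, and then uses KAM robustness to isotope the perturbed structure so that the original boundary tori are invariant again.
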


A particular consequence of this is that any rational stable hypersurface can be $C^\infty$-approximated by hypersurfaces with equidistributed closed characteristics. The rationality condition cannot be removed, as shown by Herman's counterexample to the $C^\infty$-closing lemma \cite{Herm1,Herm2}, which is a symplectic mapping torus (and thus a stable hypersurface). The notion of rationality has played a key role in several recent results on area-preserving diffeomorphisms \cite{CGPZ, EH, Pr, GLP}.

\paragraph{KAM splittings of Hamiltonian structures.} The proof of Theorem~\ref{thm:main} relies on the notion of a \textit{KAM splitting} of a Hamiltonian structure. Given a Hamiltonian structure $\omega$ on $M$, a KAM splitting is, roughly speaking, a finite collection of invariant KAM tori $T_1,\ldots,T_r$ that divide $M$ into domains that are either of contact type or symplectic mapping tori. We refer to Definition~\ref{def:KAMsplit} for details. We call a KAM splitting \textit{twist} if the Birkhoff normal form of the Hamiltonian structure along each splitting torus is non-degenerate in the KAM sense; see Section~\ref{ss:Birkhoff}. An essential step in proving Theorem~\ref{thm:main} is the following result.

\begin{theorem}\label{thm:KAMequi}
Let $M$ be an embedded hypersurface in a symplectic manifold $(W,\Omega)$ of dimension four, and let $\omega$ denote the Hamiltonian structure induced on $M$. If $(M,\omega)$ admits a twist KAM splitting and $\omega$ is rational, then it satisfies the neighboring equidistribution property.
\end{theorem}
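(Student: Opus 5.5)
The plan is to reduce the statement to the two known equidistribution mechanisms, Irie's result for contact-type regions and the results of \cite{P,PP} for rational area-preserving maps, with KAM stability of the splitting tori used to isolate the pieces from one another. The first step is to choose the neighborhood $\mathcal{V}$. The splitting tori $T_1,\dots,T_r$ are twist, meaning the Birkhoff normal form along each $T_i$ is KAM non-degenerate. By KAM theory, for every $\tilde M$ sufficiently $C^\infty$-close to $M$ there persist invariant tori $\tilde T_i$ with Diophantine frequencies close to $T_i$, and these can be arranged to depend continuously on $\tilde M$. I would take $\mathcal{V}$ small enough that two further properties hold for all $\tilde M\in\mathcal{V}$. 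First, the tori $\tilde T_1,\dots,\tilde T_r$ cut $\tilde M$ into domains $\tilde U_j$ isotopic to the original domains $U_j$. Second, on each $\tilde U_j$ the contact-type or mapping-torus structure persists, since these are open conditions: a stabilizing contact form perturbs to a contact form, and a transverse section perturbs to a transverse section, up to a collar near the tori. Each $\tilde T_i$ is invariant under the characteristic flow, so every closed characteristic lies in a single domain.

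The second step equidistributes on each domain with boundary, for a residual set $\mathcal{A}_j\subset\mathcal{V}$. On a contact-type piece I would adapt Irie's argument: perturbations of $\tilde M$ supported in the interior of $\tilde U_j$ are, up to Gotay's theorem, perturbations of the Reeb flow. The boundary consists of invariant tori on which the Reeb flow has irrational slope. On such a piece I would run Irie's embedded contact homology (ECH) spectral-invariant argument with boundary, in the spirit of the quantitative closing lemmas for Reeb flows alluded to in the abstract. This produces, for a residual set, weighted closed orbits whose currents converge to $\omega|_{\tilde U_j}$. On a mapping-torus piece the return map is an area-preserving map of a compact surface with boundary; the boundary components are invariant circles carrying irrational rotation. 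Rationality of $\omega$ makes the flux condition for this map rational, so the PFH-based results of \cite{PP} apply. They give generic equidistribution for maps, which I would then transfer to the flow by the standard suspension correspondence for currents. The perturbation of the map is realized by a compactly supported graph-type perturbation of the hypersurface.

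The third step assembles the pieces. I would set $\mathcal{A}=\bigcap_j\mathcal{A}_j$, which is residual. Given $\tilde M\in\mathcal{A}$, I take weighted orbit collections on each $\tilde U_j$ approximating $\omega|_{\tilde U_j}$ and add them. The tori have measure zero, so the sum approximates $\omega$ as a current on all of $\tilde M$.

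I expect the main obstacle to be genericity in step two. The tori $\tilde T_i$, and hence the domains, move with $\tilde M$, so the residual set must be built from open dense conditions stated invariantly. One candidate condition is ``there exist finitely many nondegenerate weighted orbits within $\varepsilon$ of $\omega$ on the region bounded by tori near $T_i$, tested against a countable dense set of forms.'' Openness then follows from nondegeneracy. Density would follow from the closing lemmas on domains with KAM boundary, where I would use the twist condition to get nearby invariant tori of rational slope or good approximating regions, so that the spectral-gap argument localizes. Checking that the rationality hypothesis gives exactly the monotonicity needed for the PFH and ECH spectral invariants on each piece is the delicate point.
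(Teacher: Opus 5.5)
Your proposal has two genuine gaps, and they sit exactly where the paper's argument does its work.

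\textbf{The contact pieces.} You give no actual mechanism for the contact-type pieces. ``Rerunning Irie's ECH argument with boundary'' on a domain bounded by KAM tori is not something you can cite. Irie's proof rests on the volume property of ECH spectral invariants of closed contact manifolds, and an analogue for domains with invariant-torus boundary would be a substantial project in its own right. Your plan to use the twist condition to find ``nearby invariant tori of rational slope'' so that the spectral argument localizes does not match anything true. Twist KAM tori are accumulated by Diophantine twist tori, and rational tori are generally destroyed under perturbation. In the paper, the twist condition serves only to give robustness (R\"ussmann) and two-sided accumulation (Moser).

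The missing idea is that no closing lemma with boundary is needed. Glue solid tori to the contact piece $N$ and extend the contact structure using Giroux's result, obtaining a closed contact manifold $\widetilde N$. Apply \cite{Ir2} on $\widetilde N$. The resulting perturbation is exact on $N$, so by Theorem~\ref{thm:KAMrobust} the boundary tori persist and can be moved back by a small isotopy. Each orbit then lies entirely inside or entirely outside $N$, and the current can be restricted to $N$. On the cross-section pieces, \cite{PP} also does not apply verbatim, because its rationality notion is stronger for surfaces with boundary. That mismatch is what Lemma~\ref{lem:rational_extension_to_closed_surface} is for.

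\textbf{The assembly.} The assembly step fails as stated, for two reasons.
\begin{itemize}
\item[(i)] The perturbations are not compactly supported in the interior of the pieces, contrary to what you assert. Irie's is a global conformal factor on $\widetilde N$. Those of \cite{PP} are Hamiltonians that are only locally constant on $\partial\Sigma$, so they can rotate the boundary circles and change the rotation of the boundary torus. Hence they cannot simply be added, and the splitting tori with the prescribed rotation direction need not bound the domains on which you produced orbits. Using domain-by-domain residual sets does not avoid this, since density of each $\mathcal{A}_j$ still requires such a localized perturbation.
\item[(ii)] ``The tori have measure zero'' only controls $\int\beta\wedge\omega$. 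A current that approximates $\omega$ on forms supported away from $\mathcal{T}$ can still carry large mass in thin collars around the tori.
\end{itemize}

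The paper resolves both with the two KAM properties:
\begin{itemize}
\item[(a)] By Theorem~\ref{thm:KAMstable}, choose inner twist KAM tori $S_B$ near each boundary torus $B$.
\item[(b)] Cut off each local perturbation, via a primitive of $\omega_j-\omega|_{N_j}$, in the collar between $S_B$ and $B$.
\item[(c)] By Theorem~\ref{thm:KAMrobust}, the glued form has invariant tori near $S_B$ bounding domains $\widetilde N_j$ on which it equals $\omega_j$. Keep only the orbits inside the $\widetilde N_j$.
\item[(d)] Include a framing $\lambda$ among the test forms. Proposition~\ref{prop:eqCompSupp} gives the one-sided bound $\int_{c}\lambda<\int\lambda\wedge\omega'+\varepsilon$, which shows the glued current carries little mass near $\mathcal{T}$.
\end{itemize}
Only this global near-equidistribution statement, Theorem~\ref{thm:nearEqRobSplit}, is fed into the Baire argument on all of $M$. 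There, nondegeneracy gives openness, and \cite[Lemma 49]{C} realizes the perturbation by a hypersurface.
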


The splitting allows us to apply the equidistribution results for Reeb flows \cite{Ir2} and symplectic mapping tori of surfaces with boundary \cite{PP} separately on each domain. The difficulty is to combine the resulting perturbations into a smooth perturbation on all of $M$: they need not vanish near the splitting tori, and cutting them off can destroy the closed characteristics used to approximate the Hamiltonian structure.

Two properties of twist KAM tori resolve this difficulty. Stability (Theorem~\ref{thm:KAMstable}) provides additional invariant tori arbitrarily close to the splitting tori, and robustness (Theorem~\ref{thm:KAMrobust}) ensures that these persist under sufficiently small cohomologous perturbations. These tori separate the closed characteristics needed for approximation away from the splitting tori from the regions where we cut off the perturbations. We can therefore combine the perturbations while retaining those characteristics, and control the remaining approximation error near the splitting tori.
\medskip

To deduce Theorem \ref{thm:main} from Theorem~\ref{thm:KAMequi}, we then give a sufficient condition for a stable hypersurface to admit a KAM splitting (Theorem \ref{thm:StRobust}). This step requires the use of tools in stable Hamiltonian topology \cite{CV}. Finally, one can show that this condition can be achieved by a $C^\infty$-perturbation starting from an arbitrary stable hypersurface. 

\begin{Remark}
From the main results in \cite{CR}, one can deduce that any stable hypersurface can be $C^\infty$-perturbed to another stable hypersurface whose characteristic foliation admits a Birkhoff section. More precisely, it follows from \cite[Theorem B and Remark 6.5]{CR} that any stabilizable Hamiltonian structure can be $C^\infty$-perturbed to admit a Birkhoff section. By \cite[Remark 6.3]{CR} this perturbation can be taken exact, which further implies that it can be realized by a corresponding hypersurface perturbation \cite[Lemma 49]{C}.

One might therefore ask whether Theorem \ref{thm:main} can be deduced by applying the generic equidistribution result for area-preserving diffeomorphisms of compact surfaces with boundary \cite{PP} to the first-return map of the Birkhoff section. However, there are a few important difficulties with this approach. For instance, the first-return map of a Birkhoff section is in general only an area-preserving diffeomorphism of the interior of the Birkhoff section. While the first-return map does extend to a diffeomorphism of the surface with boundary if the Birkhoff section is $\partial$-strong (see e.g. \cite{FlHr25}), the area form vanishes on the boundary of the surface, so the results of \cite{PP} are not directly applicable. Another difficulty is the slightly subtle question of which perturbations of the first-return map (not supported in the interior of the Birkhoff section) can be realized by a perturbation of the Hamiltonian structure (see e.g. \cite{AGZ22}).
\end{Remark}

\paragraph{Conservative 3D flows.} Let $M$ be a closed three-manifold equipped with a volume form $\mu$, and let $\mathfrak{X}_\mu(M)$ denote the space of smooth nowhere-vanishing vector fields preserving $\mu$, equipped with the $C^\infty$ topology. A vector field $X\in\mathfrak{X}_\mu(M)$ has \emph{equidistributed periodic orbits} if the Hamiltonian structure $\omega=\iota_X\mu$ has equidistributed closed characteristics in the sense of Definition~\ref{def:equiHam}.

Let $\mathcal{SR}_\mu(M)\subset\mathfrak{X}_\mu(M)$ denote the subset of vector fields parallel to the Reeb field of a stable Hamiltonian structure. Equivalently, $\mathcal{SR}_\mu(M)$ consists of the vector fields $X\in\mathfrak{X}_\mu(M)$ for which $\iota_X\mu$ is stabilizable. These are precisely the $\mu$-preserving geodesible vector fields, where geodesibility means that the orbits can be made geodesics of a Riemannian metric after reparameterization \cite{Re}.

\begin{theorem}\label{thm:main2}
    Let $M$ be a closed three-manifold and $\mu$ a volume form. For any vector field $X\in \mathcal{SR}_\mu(M)$, there is an arbitrarily $C^\infty$-close vector field $Y\in \mathfrak{X}_\mu(M)$ with equidistributed periodic orbits.
\end{theorem}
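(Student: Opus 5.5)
We deduce Theorem~\ref{thm:main2} from the hypersurface results, using that the problem depends only on the Hamiltonian structure $\omega=\iota_X\mu$. Fix $X\in\mathcal{SR}_\mu(M)$, so $\omega$ is stabilizable with stabilizing one-form $\lambda$. Two reductions are needed: make the structure rational, and realize it as a hypersurface so that Theorem~\ref{thm:main} applies.

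\textbf{Step 1: rationality.} Choose a closed two-form $\eta$ that is $C^\infty$-small and such that $[\omega+\eta]$ is a multiple of a rational class; this is possible because rational classes are dense in $H^2(M;\mathbb{R})$. Stability is an open condition on the pair $(\omega,\lambda)$ in the sense of Cieliebak--Volkov \cite{CV}, since we need $\lambda\wedge\omega>0$ and $d\lambda=0$ on $\ker\omega$, i.e.\ $\ker\omega\subset\ker d\lambda$. The second condition is not open, so instead of keeping $\lambda$ fixed I would use the standard fact that the stabilizable structures in a given cohomology class near $\omega$ are handled by a perturbation to a nearby stable pair. Concretely, I would first perturb within the stable class to a structure $\omega'$ of the form $\omega+d\beta+\eta$ with $\eta$ supported, for instance, as a pullback along a map to a torus, and then re-stabilize. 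If this is awkward, an alternative is to take $\eta=c\,d\lambda+\sigma$, where $\sigma$ is a small closed form adapted to $\lambda$ so that $\lambda$ still stabilizes. I expect this step to be the main technical point. In the case $d\lambda=0$ (mapping-torus type), adding $\sigma$ with $\lambda\wedge\sigma=0$ pointwise-compatible needs care, so I would fall back on the \cite{CV} classification into contact and flat pieces.

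\textbf{Step 2: embedding.} Realize $(M,\omega')$ as a hypersurface in $W=M\times(-\epsilon,\epsilon)$ with $\Omega=\omega'+d(t\lambda)$, which is symplectic near $t=0$ because $\lambda\wedge\omega'>0$. Theorem~\ref{thm:main} then gives a $C^\infty$-close stable hypersurface $M_1$ with the neighboring equidistribution property, and in particular a hypersurface $M_2$, arbitrarily close to $M_1$, with equidistributed closed characteristics. We write $M_2$ as the graph of a small function $f$, and pull back $\Omega$ by the graph map to obtain a closed maximal-rank form $\omega_2=\omega'+d(f\lambda)$ on $M$. This form is $C^\infty$-close to $\omega$, and its characteristic leaves correspond to those of $M_2$.

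\textbf{Step 3: back to vector fields.} Define $Y$ by $\iota_Y\mu=\omega_2$; this is possible and unique because $\mu$ is a volume form. Then $Y$ is $\mu$-preserving, since $d\omega_2=0$, and nowhere vanishing, and it is $C^\infty$-close to $X$ because $\omega_2$ is close to $\omega$. Its orbits are the characteristic leaves of $\omega_2$, so $Y$ has equidistributed periodic orbits by Definition~\ref{def:equiHam}. Since equidistribution is defined through currents of $\omega_2$, this is independent of the parametrization of the orbits.
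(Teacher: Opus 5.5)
Your Steps 2 and 3 are fine. When $[\omega]$ is already a multiple of a rational class they give a complete argument, which is Case 1 of the paper's proof.

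\textbf{The gap is Step 1.} Theorem~\ref{thm:main} needs a perturbation of $\omega$ that is \emph{simultaneously} rational and stabilizable, and you do not produce one.
\begin{itemize}
\item Stability is not an open condition (see \cite{C}), so there is no standard fact that lets you ``re-stabilize'' a nearby structure in a given cohomology class.
\item The term $c\,d\lambda$ is exact and does not move $[\omega]$ at all, so everything rests on the unspecified $\sigma$.
\item Your worry about the region $\{d\lambda=0\}$ is backwards. There the condition $\ker(\omega+\sigma)\subset\ker d\lambda$ is vacuous. Wherever $d\lambda\neq 0$, keeping $\lambda$ stabilizing forces $\iota_R\sigma=0$.
\item The set $\{d\lambda=0\}$ can even be empty while $[\omega]$ is irrational. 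For example, take $\lambda=\cos(2\pi t)\,dx+\sin(2\pi t)\,dy$ and $\omega=d\lambda/f(t)$ on $T^3$, with $f>0$ suitably non-constant.
\end{itemize}
The paper says explicitly that it is not clear whether $\iota_X\mu$ can be perturbed to a rational stabilizable structure. So this step is the actual difficulty, not a technicality.

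\textbf{The missing idea} is that stability and global rationality are never needed at the same time.
\begin{itemize}
\item Stability is used only to produce a twist KAM splitting. Apply Theorem~\ref{thm:struc}, then Lemma~\ref{lem:slope} in $M\times(-\varepsilon,\varepsilon)$ with $\Omega=\omega+d(t\lambda)$, then Theorem~\ref{thm:StRobust}. This gives a nearby $\tilde\omega$ with a twist KAM splitting.
\item By Remark~\ref{rem:rational}, Theorem~\ref{thm:KAMequi} only needs rationality on the pieces that admit a global cross section, since contact pieces are exact.
\item On each such piece $N\cong Y_\phi$, the paper uses \cite[Proposition 3.3]{PP} to replace $\phi$ by a $C^\infty$-close rational $\hat\phi$ that agrees with $\phi$ near $\partial\Sigma$.
\item This yields $\hat\omega$, $C^\infty$-close to $\tilde\omega$, with the same twist KAM splitting and rational on every cross-section piece. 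It may be neither globally rational nor stabilizable.
\item Theorem~\ref{thm:KAMequi} is then applied to $M\times\{0\}$ in $(M\times(-\varepsilon,\varepsilon),\hat\omega+d(t\eta))$, where $\eta$ is merely a framing. Finally $Y$ is defined by setting $\iota_Y\mu$ equal to the pushed-forward structure, as in your Step 3.
\end{itemize}
This is also why the resulting $Y$ need not lie in $\mathcal{SR}_\mu(M)$.
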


Theorem~\ref{thm:main2} does not follow directly from Theorem~\ref{thm:main} because $\iota_X\mu$ need not be rational. We adapt its proof by allowing perturbations that change the cohomology class in the regions where rationality is needed; see Section~\ref{ss:app3D}. The approximating vector field $Y$ need not belong to $\mathcal{SR}_\mu(M)$.

The conclusion was previously known for vector fields of contact type, meaning that $\iota_X\mu$ admits a contact primitive \cite{Ir2}, and for vector fields admitting a global cross section \cite{AI, CGPZ, EH, CGPPZ}; see Section~\ref{ss:app3D}. However, the class $\mathcal{SR}_\mu(M)$ is notably larger: for instance, there exist vector fields in $\mathcal{SR}_\mu(M)$ with a $C^1$-neighborhood (and even $C^0$-neighborhood) in $\mathfrak{X}_\mu(M)$ containing neither vector fields of contact type nor vector fields admitting a global cross section. For the $C^1$-topology, this follows from \cite[Propositions 22 and 27 and Corollary 32]{C}. For the $C^0$-topology, it is not difficult to construct a vector field $X$ in $\mathcal{SR}_\mu(M)$ such that $\iota_X\mu$ is exact, say with primitive $\beta$, and such that there is a pair of null-homologous periodic orbits with positive and negative $\beta$-integrals. This property obstructs $X$ from being contact type and persists under $C^0$-perturbations \cite[Theorem 15 and its proof]{CTdL}, and also obstructs the existence of a global cross-section by Schwartzman's characterization \cite{Sch}. 

\paragraph{Organization of the paper.}
Section~\ref{sec:prelimi} collects preliminary definitions and results. In Section~\ref{sec:equi_KAM}, we introduce KAM splittings and prove Theorem~\ref{thm:KAMequi} using the KAM results established in Section~\ref{sec:KAM_proofs}. Section~\ref{sec:equi_stable} contains the proofs of Theorems~\ref{thm:main} and~\ref{thm:main2}. Finally, Section~\ref{sec:KAM_proofs} develops the KAM theory for Hamiltonian structures.

\paragraph{Acknowledgments.} RC acknowledges partial support from the AEI grant PID2023-147585NA-I00, the Departament de Recerca i Universitats de la Generalitat de Catalunya (2021 SGR 00697), and the Spanish State Research Agency, through the Severo Ochoa and María de Maeztu Program for Centers and Units of Excellence in R\&D (CEX2020-001084-M). OE is supported by Dr. Max R\"ossler, the Walter Haefner
Foundation, and the ETH Z\"urich Foundation. This research was partially conducted during the period OE served as a Clay Research Fellow. RP acknowledges support from the Miller Institute for Basic Research at UC Berkeley, as well as the warm hospitality of the University of Barcelona, where part of this research was conducted. 

\paragraph{AI disclosure.} AI models were only used for proofreading assistance during the final stages of manuscript preparation.

\section{Preliminaries}\label{sec:prelimi}

In this section, we recall some preliminary notions needed for the proofs of our main results.

\subsection{Area-preserving surface maps} Let $(\Sigma, \tau)$ denote a compact symplectic surface, possibly with non-empty boundary. A diffeomorphism $\phi: \Sigma \to \Sigma$ is \emph{area-preserving} if $\phi^*\tau = \tau$. 

\subsubsection{Mapping tori}

Let $\phi$ be an area-preserving diffeomorphism. The \emph{mapping torus}
$$Y_\phi := [0,1] \times \Sigma/\{(1, x) \sim (0, \phi(x))\}$$
is obtained by gluing the right boundary component $\{1\}\times \Sigma$ of $[0,1] \times \Sigma$ onto the left boundary component $\{0\}\times \Sigma$ via $\phi$. This is a smooth, compact $3$-manifold fibering over the circle with fiber $\Sigma$.

Letting $t$ denote the coordinate on $[0,1]$, the $1$-form $dt$ and vector field $\partial_t$ descend to the manifold $Y_\phi$. The flow of $\partial_t$ on $Y_\phi$ can be thought of as a continuous-time version of the map $\phi$. The $2$-form $\tau$ descends to a closed $2$-form $\tau_\phi$ on $Y_\phi$. The line field $\operatorname{ker}\tau_\phi$ is spanned by $\partial_t$. In the terminology of Section~\ref{ss:HamSt} below, the $2$-form $\tau_\phi$ is a Hamiltonian structure on $Y_\phi$ and $dt$ is a stabilizing $1$-form.

\subsubsection{Hamiltonian isotopy} 

Let $\phi$ be an area-preserving diffeomorphism and let $Y_\phi$ be its mapping torus. Let $H: Y_\phi \to \mathbb{R}$ be a smooth function such that, for any fiber $F \subset Y_\phi$, the restriction $H|_F$ is locally constant on the boundary of $F$. For such $H$, a new area-preserving diffeomorphism $\phi^H$ can be constructed as follows. Pull back $H$ to view it as a time-dependent Hamiltonian $H: [0,1] \times \Sigma \to \mathbb{R}$. The Hamiltonian vector field $X_H$ is defined by the equation $\tau(X_{H_t}, -) = dH_t$. Let $\psi^t_H$ denote the time-$t$ flow of $X_H$. By the boundary condition for $H$, the vector field $X_H$ is tangent to $\partial\Sigma$ at all times, so $\psi^t_H$ is well-defined for all times. Define $\phi^H := \phi \circ \psi^1_H$.

Two area-preserving diffeomorphisms $\phi$ and $\phi'$ are \emph{Hamiltonian isotopic} if there exists $H: Y_\phi \to \mathbb{R}$ such that $\phi' = \phi^H$. It is not difficult to see that Hamiltonian isotopy is an equivalence relation. A single area-preserving diffeomorphism $\phi$ is called \emph{Hamiltonian} if it is Hamiltonian isotopic to the identity map. 

Suppose that $\phi$ and $\phi'$ are Hamiltonian isotopic. Then, any choice of $H$ such that $\phi' = \phi^H$ induces a diffeomorphism
\begin{equation}\label{eq:mt_bijection}
\begin{split}
f_H: Y_\phi \to Y_{\phi'}, \\
[(t,x)] \mapsto [(t, (\psi^t_H)^{-1}(x))].
\end{split}
\end{equation}
The following identity will be useful shortly:
\begin{equation}\label{eq:mt_pullback}f_H^*\tau_{\phi'} = \tau_\phi + dH \wedge dt.\end{equation}

\subsubsection{Rational area-preserving diffeomorphisms}
\label{subsubsec:rational_area_pres_diffeo}

An area-preserving diffeomorphism $\phi$ is called \emph{rational} if the cohomology class 
\begin{equation*}
[\tau_\phi] \in H^2(Y_\phi; \mathbb{R})
\end{equation*}
is a real multiple of a class in $H^2(Y_\phi; \mathbb{Q})$. Rationality is invariant under Hamiltonian isotopy. This claim follows from equation~\eqref{eq:mt_pullback}.

\begin{Remark}
\label{rem:different_notions_of_rationality}
    The notion of rationality we adopt in our paper is slightly weaker than the notion of rationality used in~\cite{PP}, whose results we are going to use. In that paper, it is required that $A^{-1}[\tau_\phi]$ is a rational cohomology class, where $A$ is the total area of $\Sigma$. For closed surfaces, these two notions of rationality agree, but for surfaces with non-empty boundary our notion is more general.
\end{Remark}

\begin{lemma}
\label{lem:rational_extension_to_closed_surface}
    Suppose that $\Sigma$ is a compact surface with non-empty boundary and that $\phi$ is a rational area-preserving diffeomorphism of $\Sigma$. Then there exist an area-preserving embedding of $\Sigma$ into a closed surface $\hat\Sigma$ and an extension $\hat\phi$ of $\phi$ to a rational area-preserving diffeomorphism of $\hat{\Sigma}$.
\end{lemma}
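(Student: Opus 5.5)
The plan is to cap off each boundary component of $\Sigma$: first glue an area-preserving collar, then glue a disk. We extend $\phi$ over the caps by maps that are rotations on the disks. The free parameters are the areas of the caps, and we choose them so that the new cohomology class $[\tau_{\hat\phi}]$ is rational. Throughout, fix $\lambda>0$ such that $[\tau_\phi]\in\lambda H^2(Y_\phi;\mathbb{Q})$.

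\emph{Step 1: extension over collars and caps.} The map $\phi$ permutes the boundary circles $C_1,\dots,C_m$ by a permutation $\sigma$, and it restricts to orientation-preserving diffeomorphisms $C_i\to C_{\sigma(i)}$. For each $\sigma$-orbit $O_j$ we choose one area $a_j>0$. We then glue to every $C_i$ with $i\in O_j$ an annulus $A_i\cong[0,1]\times S^1$ of area $\epsilon$, and to its outer end a disk $D_i$ of area $a_j$. Using a standard area-preserving collar near $\partial\Sigma$, we extend $\phi|_{C_i}$ to area-preserving diffeomorphisms $A_i\to A_{\sigma(i)}$ that are smooth across $C_i$ and are rigid rotations near the outer boundary. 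This is possible because every orientation-preserving circle diffeomorphism is isotopic to a rotation, and a Moser/flux argument in the annulus lets us realize such an isotopy by an area-preserving map. On the disks we use the corresponding rotations $D_i\to D_{\sigma(i)}$. This is consistent because the disks along one $\sigma$-orbit all have the same area. Since the rotation angles along an orbit compose to the rotation at the end of the cycle, the result $\hat\phi$ is a smooth area-preserving diffeomorphism of the closed surface $\hat\Sigma$ extending $\phi$.

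\emph{Step 2: computing the new periods.} The mapping torus $Y_{\hat\phi}$ is $Y_\phi$ with one solid torus $V_j$ glued along each boundary torus $T_j$ (the suspension of the orbit $O_j$). The form $\tau_\phi$ vanishes on $\partial Y_\phi$, because $\partial_t$ is tangent there and lies in the kernel. By Mayer--Vietoris, $H_2(Y_{\hat\phi})$ is generated by two kinds of classes. The first kind is the image of $H_2(Y_\phi)$, whose periods are unchanged and already lie in $\lambda\mathbb{Q}$. The second kind is represented by relative cycles $c\in H_2(Y_\phi,\partial Y_\phi)$ whose boundary is a combination $\sum_j n_j(c)[\mu_j]$ of meridians $\mu_j$ (the boundary circles over $O_j$), closed up by meridian disks in the $V_j$. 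The fiber $\hat\Sigma$ is one such class. The period of such a closed-up class equals
\begin{equation*}
P(c)+\sum_j n_j(c)\,(a_j+\epsilon),
\end{equation*}
where $P(c)=\int_c\tau_\phi$ is a fixed real number.

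\emph{Step 3: choosing the areas.} The boundary map $c\mapsto(n_j(c))_j$ sends these relative classes onto a sublattice $L\subset\mathbb{Z}^{\#\text{orbits}}$. Modulo absolute classes, whose periods are already in $\lambda\mathbb{Q}$, it suffices to impose the condition on a basis $c_1,\dots,c_r$ of $L$; this is well defined because two relative classes with the same boundary differ by an absolute class. The conditions are
\begin{equation*}
\sum_j n_j(c_k)(a_j+\epsilon)\in -P(c_k)+\lambda\mathbb{Q}, \qquad k=1,\dots,r.
\end{equation*}
The integer matrix $(n_j(c_k))$ has rank $r$, so this rational-affine system has a real solution. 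The solution set is invariant under adding $\lambda q$ for $q\in\mathbb{Q}^{\#\text{orbits}}$, so we can choose a solution with every $a_j>0$ by adding large positive rational multiples of $\lambda$. With these areas, $[\tau_{\hat\phi}]$ is a real multiple of a rational class.

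The main obstacle is Step 2. One has to verify carefully that the Mayer--Vietoris description really accounts for all of $H_2(Y_{\hat\phi})$, in particular that no other classes appear beyond absolute classes and relative classes closed up by meridian disks. One also has to make sure the period formula is independent of the choice of relative representative. I would first do the case where $\sigma$ is the identity, so each $T_j$ is a single boundary torus, and then reduce the general case to it by treating orbits of boundary components as single tori.
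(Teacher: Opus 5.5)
Your proof is correct, and it takes a more self-contained route than the paper.

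\textbf{The paper's route.} The paper works in two stages and relies on \cite{PP}. It first attaches annuli whose areas are chosen so that $(A')^{-1}[\tau_\phi]$ is rational, i.e.\ it upgrades $\phi$ to rationality in the stronger, area-normalized sense of \cite{PP}. It extends over the annuli and then over capping disks via \cite[Prop.~4.1]{PP}. Finally it invokes \cite[Lem.~4.5]{PP} to choose the disk areas so that $\hat\phi$ is rational. The annulus stage exists only to meet the hypothesis of that lemma.

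\textbf{Your route.} You cap off in one stage; your collars of area $\epsilon$ only help with the extension and are not free parameters. You then compute the periods of $\tau_{\hat\phi}$ directly, which in effect reproves the needed case of \cite[Lem.~4.5]{PP}. Because you keep the scale $\lambda$ fixed throughout, you can work with the paper's weaker notion of rationality and never need to normalize by total area.

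\textbf{Your flagged obstacle.} Step~2 is routine. Since $H_2(V_j)=0$ and the kernel of $H_1(T_j)\to H_1(V_j)$ is $\mathbb{Z}\mu_j$, Mayer--Vietoris gives an exact sequence
\[
H_2(Y_\phi)\to H_2(Y_{\hat\phi})\to \Big\{(n_j)_j : \textstyle\sum_j n_j[\mu_j]=0 \text{ in } H_1(Y_\phi)\Big\}\to 0 .
\]
By the exact sequence of the pair $(Y_\phi,\partial Y_\phi)$, the right-hand group consists exactly of the boundary vectors of those relative classes whose boundary is a combination of meridians. So your list of generators is complete. Independence of the representative follows from $\tau_\phi|_{\partial Y_\phi}=0$. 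No reduction to $\sigma=\mathrm{id}$ is needed, since each $\sigma$-orbit already contributes a single torus $T_j$ with meridian $C_{i}\times\{0\}$ for any $i\in O_j$.

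\textbf{One point to tighten.} In Step~1, smoothness across $C_i$ requires matching the full jet of $\phi$ along $C_i$, not just $\phi|_{C_i}$. You should first extend $\phi$ area-preservingly a little past $\partial\Sigma$, e.g.\ by a smooth extension corrected with Moser's method using a primitive that is flat along $\partial\Sigma$. Then interpolate to a rotation on the collar by a compactly supported Moser argument, which works because source and target collars have equal area. This is the content of \cite[Prop.~4.1]{PP}, which the paper also uses.
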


\begin{proof}
    We first build a surface $\Sigma'$ by attaching annuli to the boundary components of $\Sigma$. We choose the areas of these annuli such that $(A')^{-1}[\tau_\phi]$ is a rational cohomology class, where $A'$ is the total area of $\Sigma'$. It follows from~\cite[Prop. 4.1]{PP} that $\phi$ extends to an area-preserving diffeomorphism $\phi'$ of $\Sigma'$. Strictly speaking, the cited proposition attaches disks instead of annuli, but the arguments go through with only minor notational modifications. We observe that $(A')^{-1}[\tau_{\phi'}]$ is a rational cohomology class. In other words, the diffeomorphism $\phi'$ is rational in the stronger sense of~\cite{PP}, see Remark~\ref{rem:different_notions_of_rationality}.

    The closed surface $\hat\Sigma$ is obtained from $\Sigma'$ by capping off all boundary circles by disks. Again using~\cite[Prop. 4.1]{PP}, we can extend $\phi'$ to an area-preserving diffeomorphism $\hat\phi$ of $\hat\Sigma$. By~\cite[Lem. 4.5]{PP}, we can adjust the areas of the capping disks such that $\hat\phi$ is rational.
\end{proof}

\subsubsection{Periodic orbits and closed characteristics}

Let $\phi$ be an area-preserving diffeomorphism. A \emph{simple periodic orbit} is a finite ordered subset $\mathbf{x} = \{x_1, \ldots, x_k\} \subset \Sigma$ that is cyclically permuted by $\phi$. Two simple periodic orbits are considered equivalent if the underlying point sets agree. Let $\mathcal{P}(\phi)$ denote the set of equivalence classes of simple periodic orbits. 

A \emph{closed characteristic} is an oriented embedded loop $c$ in the mapping torus $Y_\phi$ that is everywhere positively tangent to the vector field $\partial_t$. Let $\mathcal{C}(\phi)$ denote the set of closed characteristics. 

Any $\mathbf{x} \in \mathcal{P}(\phi)$ induces a closed characteristic $c \in \mathcal{C}(\phi)$. Write $\mathbf{x}$ as a set $\{x_1, \ldots, x_k\} \subset \Sigma$. Then, the corresponding closed characteristic $c$ is given by the image in $Y_\phi$ of the collection of segments 
$$\bigcup_{i=1}^k [0,1] \times x_i \subset [0,1] \times \Sigma.$$

One can check that the map $\mathbf{x} \mapsto c$ is a bijection between $\mathcal{P}(\phi)$ and $\mathcal{C}(\phi)$. An inverse map $\mathcal{C}(\phi) \to \mathcal{P}(\phi)$ is constructed as follows. For each $c \in \mathcal{C}(\phi)$, the intersection of $c$ with $\{0\} \times \Sigma \subset Y_\phi$ is a simple periodic orbit $\mathbf{x}$. The cyclical ordering on $\mathbf{x}$ is determined by the orientation of $c$. 

\subsubsection{Currents}

Let $\mathcal{P}_{\mathbb{R}}(\phi)$ denote the set of finite formal sums of simple periodic orbits, with positive real weights. Elements of $\mathcal{P}_{\mathbb{R}}(\phi)$ are naturally \emph{$0$-dimensional currents}, i.e. continuous linear functionals on $C^\infty(\Sigma)$. For a single periodic orbit $\mathbf{x}$ and a smooth function $f$, define $\mathbf{x}(f)$ to be the sum of the values of $f$ over the points in $\mathbf{x}$. For a general $\mathbf{x} \in \mathcal{P}_{\mathbb{R}}(\phi)$, expand $\mathbf{x} = \sum a_i \mathbf{x}_i$ and define $\mathbf{x}(f) := \sum a_i \mathbf{x}_i(f)$. 

Let $\mathcal{C}_{\mathbb{R}}(\phi)$ denote the set of finite formal sums of closed characteristics, with positive real weights. Elements of $\mathcal{C}_{\mathbb{R}}(\phi)$ are called \emph{characteristic currents}. Indeed, they are naturally \emph{$1$-dimensional currents}, i.e. continuous linear functionals on the space of smooth $1$-forms on $Y_\phi$. For a single closed characteristic $c$ and a $1$-form $\alpha$, define $c(\alpha) := \int_c \alpha$. For a general $c \in \mathcal{C}_\R(\phi)$, expand $c = \sum a_i c_i$ and define $c(\alpha) := \sum a_i c_i(\alpha)$. 

It is clear that the bijection $\mathcal{P}(\phi) \to \mathcal{C}(\phi)$ described above extends to a bijection $\mathcal{P}_{\mathbb{R}}(\phi) \to \mathcal{C}_{\mathbb{R}}(\phi)$. Let us also record how the bijection acts on the level of currents. Consider $\mathbf{x} \in \mathcal{P}_{\mathbb{R}}(\phi)$ and $c \in \mathcal{C}_{\mathbb{R}}(\phi)$ corresponding to each other under the bijection. Given a smooth $1$-form $\alpha$ on $Y_\phi$, let $f_\alpha: \Sigma \to \mathbb{R}$ be the smooth function defined by lifting $\alpha$ to $[0,1] \times \Sigma$ and then integrating along the segments $[0,1] \times \{x\}$. Then, observe that
\begin{equation*} c(\alpha) = \mathbf{x}(f_\alpha).\end{equation*}
Note that every smooth function $f:\Sigma \rightarrow \mathbb{R}$ arises as $f_\alpha$ for some $1$-form $\alpha$ on $Y_\phi$.

\subsection{Reeb flows on $3$-manifolds} Let $M$ be a compact three-manifold, possibly with non-empty boundary. A \emph{contact form} on $M$ is a $1$-form $\alpha$ such that $\alpha \wedge d\alpha \neq 0$. If $M$ has non-empty boundary, we require that $\ker (d\alpha)$ is tangent to $\partial M$. The \emph{Reeb vector field} $R$ is implicitly defined by the equations
$$\alpha(R) \equiv 1,\qquad d\alpha(R, -) \equiv 0.$$
Observe that $R$ is a nowhere-vanishing section of $\ker d\alpha$.

\subsection{Hamiltonian structures on $3$-manifolds}\label{ss:HamSt}

Let $M$ be a compact oriented three-manifold, possibly with non-empty boundary.

\subsubsection{Basics} A \emph{Hamiltonian structure} on $M$ is a closed $2$-form $\omega$ such that $\ker\omega$ is $1$-dimensional at each point. If $M$ has non-empty boundary, we require that $\ker\omega$ is tangent to $\partial M$. The kernel of $\omega$ is a $1$-dimensional subbundle of $TM$ that integrates to a $1$-dimensional foliation on $M$, called the \emph{characteristic foliation}. The characteristic foliation carries a natural orientation induced by the ambient orientation of $M$ and the transverse orientation determined by $\omega$. The line field $\ker\omega$ is therefore always trivial as a line bundle, so it admits nowhere-vanishing sections. Any such section is a nowhere-vanishing vector field, and this vector field preserves some smooth volume form on $M$. There is a canonical choice of such a section after we fix a \emph{framing}. This is a $1$-form $\lambda$ such that $\lambda \wedge \omega > 0$. The pair $(\lambda, \omega)$ is called a \emph{framed Hamiltonian structure}, and its \emph{Reeb vector field} $R$ is implicitly defined by the equations
$$\lambda(R) \equiv 1,\qquad \omega(R, -) \equiv 0.$$
Its flow preserves the volume form $\lambda \wedge \omega$.  

Hamiltonian structures are naturally induced on hypersurfaces in symplectic manifolds. An embedded hypersurface $M$ in a symplectic manifold $(W,\Omega)$ inherits a Hamiltonian structure $\omega = \Omega|_M$. The characteristic foliation of $\omega$ encodes the dynamics of Hamiltonian flows on $M$. To elaborate, choose a smooth function $H: W \to \mathbb{R}$ such that $M$ is a regular component of the level set $H^{-1}(0)$, i.e. $H$ has no critical points on $M \subset H^{-1}(0)$. Then, the Hamiltonian vector field $X_H$ is tangent to $M$, and its restriction to $M$ is tangent to $\ker\omega$. Therefore, the integral curves of $X_H$ on $M$ coincide with the leaves of the characteristic foliation on $M$. \medskip

A Hamiltonian structure is \emph{stabilizable} if there exists a $1$-form $\lambda$ such that $\lambda \wedge \omega > 0$ and $\ker\omega \subseteq \ker d\lambda$. The $1$-form is called a \emph{stabilizing form} and the pair $(\lambda, \omega)$ is called a \emph{stable Hamiltonian structure}. Note that $\lambda$ is a particular kind of framing, and that a stable Hamiltonian structure is a particular kind of framed Hamiltonian structure. Also, observe that, in three dimensions, there exists a unique smooth function $f$ such that $d\lambda = f\omega$. The Reeb vector field $R$ of a stable Hamiltonian structure is defined as above.

Both three-dimensional Reeb flows and area-preserving diffeomorphisms can be viewed as Reeb flows of a stable Hamiltonian structure. Given a contact form $\alpha$ on a compact three-manifold $M$, oriented by $\alpha\wedge d\alpha$, the pair $(\alpha, d\alpha)$ is a stable Hamiltonian structure. The Reeb vector field of $\alpha$ coincides with the Reeb vector field of $(\alpha, d\alpha)$. Given an area-preserving diffeomorphism $\phi$ of a compact surface $\Sigma$, the pair $(dt, \tau_\phi)$ is a stable Hamiltonian structure on $Y_\phi$. The Reeb vector field is equal to $\partial_t$. 

From the viewpoint of Hamiltonian structures, these situations can be described by contact-type Hamiltonian structures and Hamiltonian structures with global cross section, respectively. Let $\omega$ be a Hamiltonian structure on $M$. First, $\omega$ is \emph{contact-type} if $\omega = d\alpha$ for some contact form $\alpha$. Next, a \emph{global cross section} of $\omega$ is a neatly embedded compact surface $\Sigma \subset M$ that intersects transversely every leaf of the characteristic foliation of $\omega$. In this case, there exists a symplectic form $\tau$ on $\Sigma$, an area-preserving diffeomorphism $\phi$, and a diffeomorphism $Y_\phi \to M$ that pulls back $\omega$ to $\tau_\phi$. The flow on $M$ generated by any nowhere-vanishing section of $\ker\omega$ is orbit equivalent to the suspension flow of $\phi$.

\medskip

A Hamiltonian structure $\omega$ on a $3$-manifold $M$ is called \emph{rational} if the cohomology class $[\omega] \in H^2(M;\R)$ is a real multiple of a rational cohomology class. This is compatible with the notion of a rational area-preserving surface diffeomorphism introduced in Section~\ref{subsubsec:rational_area_pres_diffeo}. Indeed, an area-preserving diffeomorphism $\phi$ is rational if and only if the Hamiltonian structure $\tau_\phi$ on the mapping torus $Y_\phi$ is rational. Note that a Hamiltonian structure of contact type is automatically rational.

\subsubsection{Closed characteristics, characteristic currents and equidistribution}\label{ss:currents}

Let $\omega$ be a Hamiltonian structure on $M$. A \emph{closed characteristic} is an oriented embedded loop $c \subset M$ that is positively tangent to $\ker \omega$. Let $\mathcal{C}(\omega)$ denote the set of closed characteristics. Let $\mathcal{C}_{\mathbb{R}}(\omega)$ denote the set of finite formal sums of closed characteristics, with positive real weights. Elements of $\mathcal{C}_{\mathbb{R}}(\omega)$ are called \emph{characteristic currents}. They are naturally \emph{$1$-dimensional currents}, i.e. continuous linear functionals on the space of smooth $1$-forms on $M$. For a single closed characteristic $c$ and a $1$-form $\alpha$, define $c(\alpha) := \int_c \alpha$. For a general $c \in \mathcal{C}_\R(\omega)$, expand $c = \sum a_i c_i$ and define $c(\alpha) := \sum a_i c_i(\alpha)$. 

\begin{defi}\label{def:equiHam}
    A Hamiltonian structure $\omega$ on $M$ has equidistributed closed characteristics if there is a sequence $c_k \in \mathcal{C}_{\mathbb{R}}(\omega)$ such that $c_k \longrightarrow \omega$ as one-dimensional currents in the weak$^*$ topology. 
\end{defi}

Convergence in the weak$^*$ topology means that for any one-form $\beta\in \Omega^1(M)$, one has $c_k(\beta) \longrightarrow \int_M \beta \wedge \omega$.
Similarly, a hypersurface $M\subset (W,\Omega)$ has equidistributed closed characteristics if the induced Hamiltonian structure does.

\subsubsection{Structure of stable Hamiltonian $3$-manifolds}

Let $(\lambda,\omega)$ be a stable Hamiltonian structure on a closed $3$-manifold $M$. Let $f \in C^\infty(M)$ denote the unique smooth function satisfying $d\lambda = f\omega$. Let $R$ denote the Reeb vector field. We recall here the notion of a \emph{structural decomposition}, following \cite{CR}, and introduce the notion of an \emph{improved structural decomposition}. To prepare, we define the different pieces of the decomposition, starting with contact and fibered regions. 

\begin{defi}\label{def:contact_and_fibered}
    Let $N \subset M$ be a compact embedded $3$-dimensional submanifold, possibly disconnected and possibly with non-empty boundary. Assume further that if $\partial N$ is non-empty, then $\ker\omega$ is tangent to $\partial N$. Call $N$ a \emph{contact region} if $f$ is nowhere-vanishing on $N$. Call $N$ a \emph{fibered region} if there exists a smooth, closed $1$-form $\beta$ on $N$ such that $\beta \wedge \omega$ is nowhere vanishing.  
\end{defi}

The following pair of lemmas is immediate from Definition~\ref{def:contact_and_fibered}.

\begin{lemma}\label{lem:lambda_contact_region}
    Let $N \subset M$ be a contact region. Then $\lambda$ restricts to a contact form on $N$, and the Reeb vector field of $\lambda$ is equal to $R$. 
\end{lemma}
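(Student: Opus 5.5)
The plan is a direct computation from the definitions; no earlier result is needed. Restrict everything to $N$. There $d\lambda = f\omega$ with $f$ nowhere vanishing. The first step is to show that $\lambda|_N$ is a contact form. Compute
$$\lambda\wedge d\lambda = f\,\lambda\wedge\omega .$$
Since $\lambda\wedge\omega>0$ everywhere and $f\neq 0$ on $N$, this $3$-form is nowhere vanishing on $N$. If $N$ is disconnected, $f$ has a constant sign on each component, so on components where $f<0$ the form is contact for the opposite orientation. This is harmless, since the definition of a contact form only asks that $\lambda\wedge d\lambda\neq 0$.

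The second step is the boundary condition in the definition of a contact form on a manifold with boundary, namely that $\ker d\lambda$ is tangent to $\partial N$. Because $f\neq 0$, we have $\ker d\lambda = \ker(f\omega) = \ker\omega$ pointwise on $N$. By the definition of a contact region, $\ker\omega$ is tangent to $\partial N$ whenever $\partial N\neq\emptyset$, so the boundary requirement holds.

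The third step identifies the Reeb field. The Reeb field $R_\lambda$ of $\lambda|_N$ is determined by $\lambda(R_\lambda)=1$ and $d\lambda(R_\lambda,-)=0$. The stable Reeb field $R$ satisfies $\lambda(R)=1$ and $\omega(R,-)=0$, so $d\lambda(R,-)=f\,\omega(R,-)=0$. Hence $R$ satisfies the defining equations of $R_\lambda$. These equations have a unique solution because $\ker d\lambda$ is a line field transverse to $\ker\lambda$, which is exactly the contact condition just verified. Therefore $R_\lambda=R$ on $N$.

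There is no real obstacle in this argument. The only point needing care is the sign of $f$ relative to the orientation, which only affects which orientation $\lambda$ is positive contact for, and not whether it is a contact form.
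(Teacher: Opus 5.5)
Your proof is correct and is the direct verification the paper intends; the paper gives no written proof and calls this lemma immediate from Definition~\ref{def:contact_and_fibered}. You check that $d\lambda=f\omega$ with $f\neq 0$ gives $\lambda\wedge d\lambda=f\,\lambda\wedge\omega\neq 0$, that $\ker d\lambda=\ker\omega$ is tangent to $\partial N$, and that $R$ solves the defining equations of the Reeb field of $\lambda$.
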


\begin{lemma} 
Let $N \subset M$ be a fibered region. Then there exists a smooth fibration of $N$ over the circle such that each fiber is compact, transverse to $\partial N$, and is a global cross section of $N$. 
\end{lemma}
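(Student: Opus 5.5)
The plan is to use the closed $1$-form $\beta$ directly, in the style of Tischler's theorem. Since $\beta\wedge\omega$ is nowhere vanishing and $\omega$ has rank $2$, the restriction of $\beta$ to $\ker\omega$ is nonzero everywhere. In particular $\beta$ is nowhere zero. After replacing $\beta$ by $-\beta$ on components where necessary, we may assume $\beta(R)>0$. Note that $\beta$ vanishes on no nonzero vector of $\ker\omega$, and $\ker\omega$ is tangent to $\partial N$.

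First, I would perturb $\beta$ to a closed form with rational periods. Fix closed $1$-forms $\beta_1,\dots,\beta_m$ on $N$ whose classes form a basis of $H^1(N;\mathbb{R})$. Writing $[\beta]=\sum a_i[\beta_i]$ with $\beta=\sum a_i\beta_i+dg$, choose rationals $q_i$ close to $a_i$ and set $\beta'=\sum q_i\beta_i+dg$. The condition $\beta'(R)>0$ is open in the $C^0$ topology and $N$ is compact, so it survives for $q_i$ close enough to $a_i$. Multiplying by a common denominator gives a closed form $\beta'$ with integral periods and $\beta'(R)>0$.

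Second, define $p:N\to\mathbb{R}/\mathbb{Z}$ by $p(x)=\int_{x_0}^x\beta'$ on each component; this is well defined because the periods are integers. Then $dp=\beta'$ is nowhere zero, so $p$ is a submersion. Its restriction to $\partial N$ is also a submersion, since $R$ is tangent to $\partial N$ and $dp(R)>0$. By Ehresmann's theorem for manifolds with boundary, applied to the proper map $p$ (which is proper because $N$ is compact), $p$ is a locally trivial fibration. Its fibers $\Sigma_\theta=p^{-1}(\theta)$ are compact surfaces, and they are transverse to $\partial N$ because $R\in T\partial N$ satisfies $dp(R)\neq0$.

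Finally, I would check the cross-section property. Each fiber is transverse to $\ker\omega=\langle R\rangle$ because $dp(R)>0$. Every leaf of the characteristic foliation meets $\Sigma_\theta$: along a flow line of $R$, $p$ increases at rate $\beta'(R)\ge c>0$ by compactness. Hence the lift of $p$ to $\mathbb{R}$ is strictly increasing without bound and must hit every value mod $1$, both forward and backward in time. The flow is complete because $R$ is tangent to $\partial N$ and $N$ is compact.

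The only mildly delicate step is the rational approximation combined with the boundary version of Ehresmann. Both are standard, and the tangency of $R$ to $\partial N$ is what makes the boundary case work.
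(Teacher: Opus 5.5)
Your proof is correct and is essentially the Tischler-type argument the paper relies on. The paper calls the lemma immediate, and in Case~2 of the proof of Lemma~\ref{lem:finitetori} it runs exactly this reasoning: approximate the closed form by a rational one, integrate it to a circle-valued fibration, and observe that the fibers are cross sections because the form is nonzero on the characteristic direction.

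One small fix is needed. The basis forms $\beta_1,\dots,\beta_m$ must be chosen to represent integral classes. This is possible because the image of $H^1(N;\mathbb{Z})$ is a full-rank lattice in $H^1(N;\mathbb{R})$. With an arbitrary real basis, rational coefficients $q_i$ do not produce rational periods, so clearing denominators would not give integral periods.
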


Next, we define integrable regions. For the remainder of the paper, let $T^2 := \mathbb{C}/\left(\mathbb{Z}+\sqrt{-1}\mathbb{Z}\right)$ denote the standard $2$-torus, and let $z = x + y\sqrt{-1}$ denote the standard coordinate. 

\begin{defi}
    An \emph{integrable region} of $(\lambda,\omega)$ is the datum of an embedded compact and connected $3$-manifold $U \subset M$ with non-empty boundary, and an orientation-preserving diffeomorphism $\chi: U \to T^2 \times I$, where $I=[0,1]$, such that if $(x, y, t)$ denote the coordinates on $U$ induced by $\chi$, we have 
        $$\omega = h_1(t)dt \wedge dx + h_2(t)dt \wedge dy \quad \text{and} \quad \lambda = g_1(t)dx + g_2(t)dy + g_3(t)dt.$$
\end{defi}

For notational convenience, we often omit the coordinate map $\chi$ when discussing an integrable region. It follows from the definition that each integrable region smoothly fibers into Reeb-invariant tori. The dynamics on each torus may be described as follows. Let $S^1 \subset \mathbb{C}$ denote the unit circle. Given an integrable region $U$, let its \emph{slope} be the circle-valued function 
$$k_U := \frac{-h_2 +  h_1\sqrt{-1}}{|-h_2 + h_1\sqrt{-1}|}.$$ 
Note that, since $\omega$ is a Hamiltonian structure, at least one of $h_1$ and $h_2$ is nonzero at each point, so the function $k_U$ is well-defined. Now, we state a precise lemma describing the dynamics on $U$. 

\begin{lemma}\label{lem:integrable_regions}
    Let $U \subset M$ be an integrable region. For each $t \in I$, the torus $T^2 \times \{t\}$ is invariant under the Reeb flow. Moreover, the slope function $k_U$ is constant on $T^2 \times \{t\}$, and the Reeb vector field on $T^2 \times \{t\}$ is orbit-equivalent to the constant vector field $k_U|_{T^2\times \{t\}}$. 
\end{lemma}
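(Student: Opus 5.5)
The computation takes place in the coordinates $(x,y,t)$ on $U \cong T^2\times I$ given by $\chi$. We find an explicit nowhere-vanishing section of $\ker\omega$ and read off both claims from its form. Since $\omega = h_1(t)\,dt\wedge dx + h_2(t)\,dt\wedge dy = dt\wedge(h_1\,dx + h_2\,dy)$, consider the vector field
$$V := -h_2(t)\,\partial_x + h_1(t)\,\partial_y .$$
We check $\iota_V\omega = 0$. Indeed, $dt(V)=0$ and $(h_1dx+h_2dy)(V) = -h_1h_2+h_2h_1 = 0$, so $\iota_V(dt\wedge\theta) = dt(V)\theta - \theta(V)dt = 0$ with $\theta = h_1dx+h_2dy$. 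Because $(h_1,h_2)$ never vanishes, $V$ is nowhere zero. Since $\ker\omega$ is one-dimensional, it follows that $\ker\omega = \mathrm{span}(V)$ pointwise.

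Next we compare $V$ with the Reeb vector field. $R$ is also a nowhere-vanishing section of $\ker\omega$, so $R = \rho V$ for a smooth nowhere-vanishing function $\rho$. Explicitly, $\rho = 1/\lambda(V) = 1/(-g_1h_2+g_2h_1)$, which is a function of $t$ alone. The sign of $\rho$ is constant on each torus; in fact it is positive everywhere. The reason is that $\lambda\wedge\omega>0$ and $\chi$ is orientation preserving, and $\lambda\wedge\omega = (-g_1h_2 + g_2 h_1)\,dx\wedge dt\wedge dy$ up to the sign convention of the orientation $dx\wedge dy\wedge dt$. If the sign convention turns out reversed, $V$ should simply be replaced by $-V$. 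This only affects the orientation of $k_U$, and I would fix it by matching the definition of $k_U$. Since $R$ has no $\partial_t$-component, $t$ is constant along Reeb orbits, so each torus $T^2\times\{t\}$ is invariant.

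On a fixed torus $T^2\times\{t_0\}$, the field $R = \rho(t_0)\bigl(-h_2(t_0)\partial_x + h_1(t_0)\partial_y\bigr)$ is itself a constant vector field. It is a positive multiple of the constant field with components $(-h_2(t_0), h_1(t_0))$ normalized, i.e.\ the field corresponding to the complex number $k_U(t_0) = (-h_2+h_1\sqrt{-1})/|\cdot|$ under $z=x+y\sqrt{-1}$. In particular $k_U$ depends only on $t$, hence it is constant on the torus. Two constant vector fields that differ by a positive constant factor have the same orbits with the same orientation, being reparametrizations of each other by a linear time change. So the Reeb flow on the torus is orbit-equivalent (indeed via the identity map) to the flow of the constant field $k_U|_{T^2\times\{t_0\}}$.

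There is no real obstacle here. The only point requiring care is the sign of $\lambda(V)$, i.e.\ checking that the positivity $\lambda\wedge\omega>0$, together with the orientation of $T^2\times I$, makes $R$ a positive rather than negative multiple of $V$.
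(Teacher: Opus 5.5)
Your direct computation is correct and is exactly the verification the paper intends; the paper states this lemma without proof, as an immediate consequence of the coordinate form of $\omega$ and $\lambda$ on an integrable region. The sign check you left open resolves in your favor, and no flip is needed: $\lambda\wedge\omega = (g_2h_1-g_1h_2)\,dx\wedge dy\wedge dt = \lambda(V)\,dx\wedge dy\wedge dt$, not a multiple of $dx\wedge dt\wedge dy$ as you wrote. So for the product orientation $R = V/\lambda(V)$ is a positive multiple of $V$, and $V/|V|$ is precisely $k_U$; in any case $k_U$ is given, so you could not have fixed a sign by redefining it.
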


\begin{defi}\label{def:struc}
    A \emph{structural decomposition} of a stable Hamiltonian $3$-manifold $(M,\lambda,\omega)$ is a triple $(N_0, N_c, N_{int})$ consisting of a fibered region $N_0$, a contact region $N_c$, and a finite disjoint union $N_{int}$ of integrable regions, each possibly empty, such that the following conditions hold: 
    \begin{enumerate}
        \item $N_0$ and $N_c$ are disjoint;
        \item $M = \operatorname{Int}(N_0) \cup \operatorname{Int}(N_c) \cup \operatorname{Int}(N_{int})$;
        \item for each connected component $U \simeq T^2 \times I$ of $N_{int}$, the complement $U\,\setminus\,(N_0 \sqcup N_c)$ is given by $T^2 \times (a, b)$ for some $0< a <b <1$ depending on $U$;
        \item for each connected component $U$ of $N_{int}$, the restriction of $f$ to $U\setminus(N_0\sqcup N_c)=T^2\times(a,b)$ has no critical points and is constant on each torus $T^2\times\{t\}$ with $t\in(a,b)$.
    \end{enumerate}

    The structural decomposition is an \emph{improved structural decomposition} if moreover 
    \begin{enumerate}
        \setcounter{enumi}{4}
        \item $f$ is locally constant on $N_c$,
        \item and $f$ is equal to $0$ on $N_0$. 
    \end{enumerate}
\end{defi}

We refer to \cite{CR} for a discussion of several aspects of this definition, and only mention those relevant to us. First, the definition is motivated by the theory developed in the seminal work of Cieliebak and Volkov \cite{CV}, which implies that any stable Hamiltonian structure admits a structural decomposition (concretely, this follows from \cite[Proposition 3.23 and Theorem 3.3]{CV}). Such a decomposition is, in general, non-unique. In addition, for a given stabilizable $\omega$, different choices of stabilizing one-forms give stable Hamiltonian structures with a priori different possible structural decompositions. 

The structure theorem \cite[Theorem 4.1]{CV}, in the formulation of \cite[Theorem 2.4]{CR}, asserts that after making a $C^1$-small perturbation to $\lambda$, there exists an improved structural decomposition. 

\begin{theorem}[Cieliebak-Volkov {\cite{CV}}]
\label{thm:struc}
    Let $(\lambda,\omega)$ be a stable Hamiltonian structure. In any $C^1$-neighborhood of $\lambda$, there exists a stabilizing $1$-form $\tilde\lambda$ such that $(\tilde\lambda, \omega)$ admits an improved structural decomposition.  
\end{theorem}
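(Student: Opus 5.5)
The plan follows Cieliebak--Volkov and has two stages. First, use the classical structure theory to get an ordinary structural decomposition, as noted after Definition~\ref{def:struc}. Second, deform $\lambda$ by a $C^1$-small amount, keeping it stabilizing, so that $f$ becomes locally constant on the contact region and identically zero on the fibered region. Throughout, the basic tool is the following observation. Let $R$ be the Reeb field and suppose $\mu$ is a $1$-form with $\iota_R d\mu = 0$, $\mu(R)=0$ and $\mu$ $C^1$-small. Then $\tilde\lambda=\lambda+\mu$ is again stabilizing, because $\tilde\lambda\wedge\omega>0$ is an open condition and $\ker\omega\subset\ker d\tilde\lambda$ holds by construction. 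Its proportionality function is $\tilde f = f + g$, where $d\mu = g\,\omega$. We will always take such $\mu$ to be of the form $\mu = h(t)\,\beta$ or $\mu = dH$-type modifications that are invariant along $R$.

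\emph{Step 1: a suitable base decomposition.} By \cite[Prop.~3.23, Thm.~3.3]{CV}, $f$ is invariant under the Reeb flow, and the set where $f$ fails to be locally constant is covered by integrable regions. Here one uses the Arnold--Liouville-type structure: regular level sets of $f$ are invariant tori, and $\lambda,\omega$ take the integrable normal form near them. This gives a structural decomposition $(N_0,N_c,N_{int})$. We then shrink $N_c$ to the set where $|f|\ge\epsilon$ and take $N_0$ to be a neighbourhood of the (invariant) set where $f$ is small. Inside each integrable region, condition~4 lets us choose $a<b$ so that $f$ is monotone in $t$ on $(a,b)$.

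\emph{Step 2: flattening $f$ in integrable regions.} On an integrable region $U\cong T^2\times I$, modify $\lambda$ by $\mu = u_1(t)\,dx + u_2(t)\,dy$. Then $d\mu = (u_1' dt\wedge dx + u_2' dt\wedge dy)$, which is proportional to $\omega$ as soon as $(u_1',u_2')$ is parallel to $(h_1,h_2)$. Writing $u_i' = \rho(t) h_i(t)$, we get $\tilde f = f+\rho$. We pick $\rho$ so that $\tilde f$ becomes locally constant near the boundary pieces lying in $N_c$, and equal to $0$ near the pieces lying in $N_0$.

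\emph{Step 3: making $f\equiv0$ on $N_0$ (main obstacle).} On the fibered region we have a closed $\beta$ with $\beta\wedge\omega\ne0$. The issue is that the $C^1$-size of the correction is controlled by $\|f\|_{C^1}$ on $N_0$, and $f$ is only known to be small there in $C^0$. The constraint is that the correction $\mu$ must satisfy $d\mu = -f\omega$ on $N_0$, with $\mu$ small in $C^1$ and matching $0$ near $\partial N_0$ (or matching Step 2 there). Since $f$ is $R$-invariant and $f\omega$ is closed, I would first try to solve $d\mu=-f\omega$ by integrating along the fibration transverse direction, using that $f\omega$ is exact when its periods vanish. The periods can be arranged to be small, and any leftover class can be absorbed by a multiple of $\beta$ changing the cohomology of $\lambda$ only. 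The $C^1$-smallness will come from choosing the threshold $\epsilon$ in Step 1 small and from using \cite[Thm.~4.1]{CV}'s quantitative version. The fallback is simply to invoke \cite[Theorem 4.1]{CV} in the form \cite[Theorem~2.4]{CR} for this step.

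\emph{Step 4: the contact region.} On $N_c$, $f\neq0$ is Reeb-invariant, and after Step 2 it is already locally constant near $\partial N_c$. To make it locally constant throughout, replace $\lambda$ by $\lambda'=\lambda/ (\text{suitable function})$ only where needed, or repeat the integrable-region trick. Reeb-invariance of $f$ again means the non-constant locus lies in integrable tori. Finally we check that all four original conditions survive, which is immediate since only $\lambda$ changed and the regions are unchanged.
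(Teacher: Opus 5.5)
The paper does not prove this statement; it quotes it from \cite[Theorem~4.1]{CV} in the formulation of \cite[Theorem~2.4]{CR}. Your fallback in Step~3 is exactly that citation. Forcing $f\equiv 0$ on $N_0$ and $f$ locally constant on $N_c$ is the whole content of the theorem, so that fallback is the only complete route in your proposal. Read as an independent argument, Steps~1--4 have genuine gaps.

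First, the rescaling in Step~4 is not allowed. For a non-constant Reeb-invariant $\phi$ one gets $\iota_R\,d(\lambda/\phi)=d\phi/\phi^2\neq 0$, so $\lambda/\phi$ no longer stabilizes $\omega$.

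Second, the regions cannot in general be kept fixed. On $T^3$ with coordinates $(x,y,t)$ and $\theta(t)=2\pi t+\tfrac12\sin(2\pi t)$, let $\omega=dt\wedge(\cos\theta\,dx+\sin\theta\,dy)$ and $\lambda=-\sin\theta\,dx+\cos\theta\,dy$. Then $\lambda\wedge\omega=dt\wedge dx\wedge dy$ and $d\lambda=-\theta'\omega$, so $f=-\theta'\le-\pi$. Thus $(\varnothing,T^3,\varnothing)$ is a structural decomposition, and it is what your Step~1 produces for any threshold $\epsilon<\pi$. However, the integral of $\omega$ over the torus $\{y=\mathrm{const}\}$ is $\pm\int_0^1\cos\theta(t)\,dt=\mp J_1(1/2)\neq 0$, so $\omega$ is not exact. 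Hence no stabilizing form has $f$ equal to a nonzero constant on all of $T^3$. A $C^1$-small change of $\lambda$ also keeps $f$ away from $0$. So new integrable regions must be cut out of $N_c$, contrary to your claim that only $\lambda$ changes.

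The missing idea is how to deal with critical points of $f$. Your claim that the non-constant locus of $f$ lies in integrable tori ignores them. Condition~4 of Definition~\ref{def:struc} keeps critical points out of the cores of integrable regions. So every point where $df=0$ but $f$ is not locally constant must end up in $N_0\cup N_c$, where the improved decomposition demands $f$ locally constant.

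Critical values of a Reeb-invariant $f$ can accumulate, and near a critical level $f^{-1}(c)$ there is in general no $T^2$-invariant normal form, so your Step~2 trick is unavailable there. One must instead produce a $C^1$-small primitive of $(c-f)\omega$ on a neighbourhood of $f^{-1}(c)$ that is compatible with the collar modifications. For $c\neq 0$, since $f\omega=d\lambda$, this first requires showing that $\omega$ itself is exact on that neighbourhood, which does not follow from $\int_S f\omega=0$ alone. It then requires a $C^1$ estimate on a domain that degenerates as the neighbourhood shrinks.

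Step~3 faces the same $C^1$ problem for $c=0$ and does not resolve it. Since $f\omega=d\lambda$ is already exact, there is no leftover class to absorb, and adding multiples of the closed form $\beta$ does not change $d\mu$ anyway. The real difficulty is that $|f|<\epsilon$ only makes $d\mu$ small in $C^0$, which does not give a $C^1$-small primitive with prescribed boundary behaviour. You also have not shown that a neighbourhood of $\{|f|<\epsilon\}$ carries a closed $1$-form positive on $R$, and that neighbourhood overlaps your $N_c=\{|f|\ge\epsilon\}$.

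These are exactly the points the Cieliebak--Volkov structure theorem has to handle. You should either cite it outright, as the paper does, or supply that argument.
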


\section{Equidistribution for twist KAM splittings}\label{sec:equi_KAM}

In this section, we introduce the notion of a (twist) KAM splitting and prove a generic equidistribution result for hypersurfaces admitting such a splitting.

\subsection{KAM splittings}
\label{subsec:KAM_splitting}

Let $M$ be a compact oriented $3$-manifold and let $\omega$ be a Hamiltonian structure on $M$. An \emph{invariant torus} is an embedded $2$-torus $T \subset M$ such that $\ker\omega$ is tangent to $T$. Therefore, the characteristic foliation of $\omega$ restricts to an oriented smooth foliation on $T$. We say that an invariant torus $T$ is \emph{linear} if the restriction of the characteristic foliation to $T$ is diffeomorphic to a linear foliation on the standard torus $T^2$. The foliation on a linear invariant torus $T$ naturally gives rise to a \emph{rotation direction} in the positive projectivization $\mathbb{P}_+H_1(T;\mathbb{R})$. After a choice of identification $T\simeq T^2  = \mathbb{C}/\left(\mathbb{Z}+\sqrt{-1}\mathbb{Z}\right)$, we can also view the rotation direction as a unit vector $z \in S^1 \subset \mathbb{C}$.

A vector $z = x + y\sqrt{-1}\in S^1$ is \emph{Diophantine} if there exist constants $c > 0$ and $\tau > 1$ such that
\begin{equation*}|k_1 x + k_2 y| \geq c|k|^{-\tau}\quad\text{for all}\quad k = (k_1, k_2) \in \mathbb{Z}^2\,\setminus\,\{0\}.\end{equation*}
The set of Diophantine vectors $z \in S^1$ has full Lebesgue measure.

A linear invariant torus is called a \emph{KAM torus} if its rotation direction, viewed as an element of $S^1 \subset \mathbb{C}$, is Diophantine. The property of being Diophantine is independent of the choice of identification $T\simeq T^2$.

Near a KAM torus $T$, the two-form $\omega$ admits a Birkhoff normal form.

\begin{prop}
\label{prop:Birkhoff_normal_form}
    Let $n>0$ be a positive integer. Then there exists an identification of a tubular neighborhood of $T$ with $T^2\times (-\varepsilon,\varepsilon)$ such that, with respect to the coordinates $(x,y,r)$ on $T^2\times (-\varepsilon,\varepsilon)$, the torus $T$ is given by $\{r=0\}$ and the $2$-form $\omega$ is given by
    \begin{equation}
    \label{eq:omega_birkhoff_normal_form}
        \omega = -b(r) dr \wedge dx + a(r) dr \wedge dy + O(r^{n+1})
    \end{equation}
    for polynomials $a(r)$ and $b(r)$ of degree at most $n$.
\end{prop}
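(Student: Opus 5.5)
We prove the statement by induction on the order $k=0,1,\dots,n$, constructing coordinates in which $\omega$ agrees with a form $-b(r)\,dr\wedge dx + a(r)\,dr\wedge dy$ up to an error $O(r^{k+1})$. Each step removes the $T^2$-dependence from the next Taylor coefficient in $r$. It solves a cohomological equation along the linear flow on $T$, and this is where the Diophantine condition enters.

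\emph{Base case.} Since $T$ is a linear invariant torus, we first choose coordinates $(x,y)$ on $T$ in which the characteristic foliation is the linear foliation with rotation direction $z=x_0+y_0\sqrt{-1}$. We then extend them to a tubular neighborhood $T^2\times(-\varepsilon,\varepsilon)$ with $T=\{r=0\}$. Because $\ker\omega$ is tangent to $T$, we have $\omega|_T=0$, so on $T$ we can write $\omega=dr\wedge\theta$ for a $1$-form $\theta$ on $T$ whose kernel is the direction $x_0\partial_x+y_0\partial_y$. Hence $\theta=\rho(x,y)\,(y_0\,dx-x_0\,dy)$ for some positive function $\rho$, possibly after changing the sign of $r$. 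The factor $\rho$ is removed by reparametrizing $r\mapsto r\,g(x,y)$, using $d(rg)=g\,dr+r\,dg$. After this, $\omega=dr\wedge(a_0\,dy-b_0\,dx)+O(r)$ with constants $a_0,b_0$.

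\emph{Inductive step.} Suppose $\omega=\omega_k+r^{k+1}\eta+O(r^{k+2})$, where $\omega_k$ is in normal form with polynomial coefficients of degree $\le k$. Using $d\omega=0$ and the fact that $\omega_k$ is closed, we decompose the error term as $r^{k+1}\eta = r^{k+1}\,dr\wedge\sigma + r^{k+1}\,\mu(x,y)\,dx\wedge dy$, where $\sigma$ is a $1$-form on $T^2$. We then apply the time-one map of a vector field of the form $r^{k+1}V$ to compose with a near-identity diffeomorphism $\Phi=\mathrm{id}+O(r^{k+1})$. Three facts reduce the problem to a linear one.
\begin{itemize}
\item[(i)] The change in $\omega$ is $\Phi^*\omega-\omega = d\,\iota_{r^{k+1}V}\omega_0+O(r^{k+2})$, modulo terms absorbed at the next order.
\item[(ii)] The $dx\wedge dy$ component is forced by closedness, since $d\eta=0$ at order $r^{k}$ ties $\mu$ to $\sigma$. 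In particular, the order-$(k+1)$ $dx\wedge dy$ term satisfies $(k+1)\mu = $ (the exterior derivative of $\sigma$ on $T^2$). It therefore vanishes once $\sigma$ is made closed, and hence constant.
\item[(iii)] The adjustable part of $\sigma$ is $\mathcal{L}_{Z}h$ plus exact terms, where $h$ is a function on $T^2$ and $Z=x_0\partial_x+y_0\partial_y$ is the linear field.
\end{itemize}
Concretely, write $\sigma=\sigma_1\,dx+\sigma_2\,dy$. Using a radial reparametrization $r\mapsto r+r^{k+1}h(x,y)$ together with a tangential shift $(x,y)\mapsto(x,y)+r^{k+1}(u,v)$, we kill the non-constant Fourier modes of $\sigma_1,\sigma_2$. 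Each non-zero mode $m\in\mathbb{Z}^2$ is divided by $2\pi i\,(m_1x_0+m_2y_0)$. The constant modes that remain are exactly the new coefficients of $r^{k+1}$ in $a$ and $b$.

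\emph{Main obstacle.} The crux is the solvability of $Z h = g - \bar g$ with a smooth solution $h$. This holds by the Diophantine estimate $|m_1x_0+m_2y_0|\ge c|m|^{-\tau}$: the quotients of Fourier coefficients still decay faster than any power of $|m|$, so $h$ is $C^\infty$. A second point needs care. We must check that the order-$(k+1)$ component of type $dx\wedge dy$ really is determined by the $dr\wedge$ components through $d\omega=0$, and that the tangential and radial corrections suffice to normalize both components of $\sigma$, not just one of them. I expect this to follow by writing closedness of $\omega$ order by order. The $O(r^{k+2})$ terms generated by $\Phi$ are harmless, and after $n$ steps we obtain the claimed form with polynomials $a,b$ of degree $\le n$, shrinking $\varepsilon$ as needed.
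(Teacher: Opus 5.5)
Your route differs from the paper's, which never solves a cohomological equation itself. The paper takes an annular Poincar\'e section through a circle $C\subset T$ and imports the classical 2D Birkhoff normal form $\varphi(\theta,r)=(\theta+f(r),r)+O(r^{n+1})$ of the return map (Theorem~\ref{thm:birkhoff_normal_form_invariant_circle}). It then writes $\varphi=\psi^1_H$ with $H=F(r)+O(r^{n+2})$ via a generating function, and identifies a neighborhood of $T$ with the suspension model $d\theta\wedge dr+dH\wedge dt$. A direct 3D normal form can replace this, but your inductive step contains a concrete error and misidentifies its key mechanism.

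\textbf{The radial correction is off by one.} The map $r\mapsto r+r^{k+1}h$ gives $\Phi^*dr=dr+(k+1)r^kh\,dr+r^{k+1}dh$. It therefore adds the non-constant term $(k+1)r^kh\,dr\wedge\lambda_0$, with $\lambda_0:=a_0\,dy-b_0\,dx$, at the already normalized order $r^k$. This happens even for $k=0$, where it undoes your base case. The radial component of $V$ must vanish on $T$, i.e.\ you need $r\mapsto r+r^{k+2}h$.

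\textbf{Closedness is not what makes both components of $\sigma$ adjustable.} Write $\omega=A\,dr\wedge dx+B\,dr\wedge dy+C\,dx\wedge dy$. Closedness reads $\partial_rC=\partial_xB-\partial_yA$, so $(k+1)\mu=\partial_xB_k-\partial_yA_k=0$, because the order-$k$ coefficients $A_k,B_k$ are already constant. Thus $\mu$ vanishes automatically (your relation is off by one order), and closedness places no constraint on what the corrections can do to $\sigma$.

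\textbf{The missing computation.} What you left as an expectation is explicit: for $\Phi(x,y,r)=(x+r^{k+1}u,\,y+r^{k+1}v,\,r+r^{k+2}h)$ one gets
\[
\Phi^*\omega=\omega+r^{k+1}\,dr\wedge\bigl((k+2)\,h\,\lambda_0+dw\bigr)+O(r^{k+2}),\qquad w:=a_0v-b_0u .
\]
So the adjustable part of $\sigma$ is $C^\infty(T^2)\cdot\lambda_0+dC^\infty(T^2)$, not ``$\mathcal{L}_Zh$ plus exact terms.'' The tangential shift enters only through the single function $w$. There is therefore only one small-divisor equation, not one for each of $\sigma_1,\sigma_2$.

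Take $Z=a_0\partial_x+b_0\partial_y$ (so $\lambda_0(Z)=0$) and a constant field $Y$ with $\lambda_0(Y)=1$. Solve $Zw=\overline{\sigma(Z)}-\sigma(Z)$, where the bar denotes the mean over $T^2$; this is where the Diophantine condition enters. Then remove the $Y$-component algebraically with $h=-(\sigma(Y)+Yw)/(k+2)$. Only the mean $\overline{\sigma(Z)}$ survives, and it supplies the new $r^{k+1}$-coefficients of $a,b$. For $k=0$, its non-vanishing is exactly the twist condition.

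With these repairs your induction closes and gives a self-contained proof. The paper's route instead links the twist condition directly to the first Birkhoff invariant of the return map (Remark~\ref{rem:twist_tori_and_circles}). That link is what is needed later to apply Moser's and R\"ussmann's theorems.
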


The proof of this result is given in Subsection~\ref{ss:Birkhoff}.

Observe that the rotation direction of $T$ is parallel to $a(0) + b(0)\sqrt{-1}$. We say that the KAM torus $T$ is \emph{twist} if the complex numbers
\begin{equation*}
    a(0) + b(0)\sqrt{-1} \qquad \text{and} \qquad a'(0) + b'(0)\sqrt{-1}
\end{equation*}
are $\mathbb{R}$-linearly independent. In other words, $T$ is twist if the derivative at $r=0$ of the rotation direction of the family of linear foliations on $T^2$ specified by the family of $1$-forms $-b(r)dx + a(r)dy$ does not vanish.

\begin{Remark}
    The polynomials $a(r)$ and $b(r)$ in the above Birkhoff normal form are not uniquely determined. For instance, we may reparametrize $T^2$ by an element of $\operatorname{SL}(2,\mathbb{Z})$ and the interval $(-\varepsilon,\varepsilon)$ by a diffeomorphism fixing $0$. Whether or not $T$ is twist is independent of these choices.
\end{Remark}

Twist KAM tori are known to have remarkable robustness and stability properties. First, they persist under exact perturbations of $\omega$.

\begin{theorem}\label{thm:KAMrobust}
    Let $T \subset (M,\omega)$ be a twist KAM torus. Then, for every closed $2$-form $\tilde{\omega}$ cohomologous to $\omega$ and sufficiently $C^\infty$ close to $\omega$, there exists a twist KAM torus $\tilde{T}$ of $(M,\tilde{\omega})$ close to $T$ such that the rotation direction of $\tilde{T}$ agrees with the rotation direction of $T$ under the identification $\BP_+H_1(T;\BR) \cong \BP_+H_1(\tilde{T};\BR)$ induced by the isomorphisms $H_1(T;\BR) \cong H_1(N;\BR) \cong H_1(\tilde{T};\BR)$, where $N$ is a small tubular neighborhood of $T$ containing $\tilde{T}$. Moreover, the assignment $\tilde{\omega} \mapsto \tilde{T}$ is smooth and maps $\omega$ to $T$.
\end{theorem}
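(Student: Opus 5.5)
The plan is to reduce Theorem~\ref{thm:KAMrobust} to the classical KAM theorem for area-preserving twist maps of an annulus, using the normal form of Proposition~\ref{prop:Birkhoff_normal_form} and the cohomology assumption to kill any flux.

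\emph{Local model.} Fix the neighborhood $N \cong T^2\times(-\varepsilon,\varepsilon)$ from Proposition~\ref{prop:Birkhoff_normal_form} with some $n\ge 2$. After an $\operatorname{SL}(2,\mathbb{Z})$ change of coordinates on $T^2$ we may assume the rotation direction is transverse to the circle $\Sigma=\{x=0\}$, so $a(0)\neq 0$. The twist condition says the rotation direction of the leaves of $-b(r)dx+a(r)dy$ has nonzero derivative in $r$ at $0$. Near $T$, the characteristic foliation of $\omega$ is then transverse to the annulus $A=\{x=0\}\times(-\varepsilon,\varepsilon)$ with coordinates $(y,r)$. Its first-return map $P_\omega$ is defined on a slightly smaller annulus and preserves the area form $\omega|_A$. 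Up to $O(r^{n})$ it is the integrable twist map $(y,r)\mapsto (y+\rho(r),r)$, with $\rho(r)=b(r)/a(r)$ (up to sign conventions). Here $\rho(0)$ is Diophantine because the rotation direction of $T$ is, and $\rho'(0)\neq 0$ by the twist condition.

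\emph{Perturbations.} For $\tilde\omega$ that is $C^\infty$-close to $\omega$, the section $A$ remains transverse. The return map $P_{\tilde\omega}$ is $C^\infty$-close to $P_\omega$ and preserves $\tilde\omega|_A$. By Moser's trick on $A$ we conjugate this area form back to $\omega|_A$, smoothly in $\tilde\omega$.

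The key point is that $P_{\tilde\omega}$ has zero flux relative to $P_\omega$, i.e.\ it is exact (intersection property). The flux of a curve $\gamma=\{y\mapsto(y,r_0)\}$ under $P_{\tilde\omega}$ equals the $\tilde\omega$-area swept between $\gamma$ and its image. That region, together with pieces of characteristics, bounds a $2$-cycle homologous in $N$ to $\{x,y\}$-tori. Hence the flux difference is $\int_{T}(\tilde\omega-\omega)$, which vanishes because $\tilde\omega-\omega$ is exact.

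This is the step I expect to be the main obstacle: making the flux bookkeeping rigorous when the return-time domain is not all of $A$. I would handle it by cutting off near $\partial A$, or by working with the suspension and a closed torus $T^2\times\{r_0\}$. The exactness hypothesis is indispensable here, since otherwise one could translate in $r$ and destroy all invariant tori.

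\emph{KAM and conclusion.} Apply Moser's twist theorem, in its smooth-dependence form, to the exact area-preserving twist map $P_{\tilde\omega}$ near the integrable model. This is where $n$ is chosen large: the $O(r^{n})$ remainder is small relative to the twist on a thin annulus $|r|<\delta$, after rescaling $r=\delta s$. The theorem gives an invariant curve $\tilde\gamma$ with rotation number $\rho(0)$, close to $\{r=0\}$, depending smoothly on $\tilde\omega$ and equal to $\{r=0\}$ when $\tilde\omega=\omega$.

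Saturating $\tilde\gamma$ by the characteristic flow gives an invariant torus $\tilde T\subset N$. Its foliation is conjugate to a Diophantine rotation, so it is linear with the same rotation direction in $\mathbb{P}_+H_1(N;\mathbb{R})$, and hence is a KAM torus.

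Twistness of $\tilde T$ follows by openness. Computing the Birkhoff normal form of $\tilde\omega$ along $\tilde T$ depends continuously on $\tilde\omega$, for example through Moser's conjugacy, which is $C^1$-close to the identity. So the independence of $a(0)+b(0)\sqrt{-1}$ and $a'(0)+b'(0)\sqrt{-1}$ persists for $\tilde\omega$ close to $\omega$.
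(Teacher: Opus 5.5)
Your proposal is correct and follows essentially the paper's argument: an annular local Poincar\'e section through $T$, Moser's trick to restore the area form, exactness of the perturbed return map from the $2$-cycle (swept region plus characteristic annulus) homologous to $T$ together with $[\tilde\omega]=[\omega]$, a KAM persistence theorem for the invariant circle of rotation number $\rho(0)$, and saturation by the characteristic flow. The only real difference is the KAM input. The paper applies R\"ussmann's translated curve theorem directly at the twist Diophantine circle, which already gives the fixed rotation number, smooth dependence (with $\tilde C=C$ when $\tilde\omega=\omega$) and twistness of $\tilde C$. You reach the same conclusions through a high-order Birkhoff normal form and rescaling, a smooth-dependence version of Moser's twist theorem (which is in effect the translated-curve statement), and continuity of the first Birkhoff invariant.
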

Secondly, they have a strong stability property: they are accumulated on both sides by other invariant tori. 
\begin{theorem}\label{thm:KAMstable}
    Suppose that $T \subset (M,\omega)$ is a twist KAM torus. Then $T$ is accumulated by twist KAM tori on both sides. More precisely, let $N$ be a tubular neighborhood of $T$ and let $N_\pm$ denote the two components of $N\setminus T$. Then there exist sequences $\left\{T_k^\pm\right\}$ of twist KAM tori contained in $N_\pm$, respectively, that converge to $T$ in the $C^\infty$ topology.
\end{theorem}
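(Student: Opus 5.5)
We prove Theorem~\ref{thm:KAMstable} by reducing it to the classical KAM theorem for area-preserving twist maps of an annulus, or equivalently to a KAM statement for a near-integrable family of linear foliations on $T^2\times(-\varepsilon,\varepsilon)$.

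First we fix the Birkhoff normal form of Proposition~\ref{prop:Birkhoff_normal_form} with some large $n$. On the neighborhood $T^2\times(-\varepsilon,\varepsilon)$ we have
\[
\omega=\omega_0+O(r^{n+1}), \qquad \omega_0=-b(r)\,dr\wedge dx+a(r)\,dr\wedge dy .
\]
For $\omega_0$ every torus $\{r=\text{const}\}$ is invariant and linear, with rotation direction $a(r)+b(r)\sqrt{-1}$. The twist hypothesis says this direction has nonzero derivative at $r=0$, so on a small interval the map $r\mapsto$ rotation direction is a local diffeomorphism onto an arc of $S^1$.

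Next we rescale. On the shell $\delta\le|r|\le 2\delta$ we set $r=\delta s$, with $s\in[1,2]$ or $[-2,-1]$. Since $\omega_0$ is integrable, $\omega_\delta:=\omega/\delta$ in the coordinates $(x,y,s)$ equals an integrable twist form plus an error of size $O(\delta^{n})$ in $C^k$ for each fixed $k$, given $n$ large relative to $k$. The twist in the $s$-variable is of order $1$, because the derivative of the rotation direction with respect to $s$ is $\delta$ times the derivative at $0$, and this factor is absorbed by the rescaling of $\omega$.

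We then take a local section, say $\{x=0\}$ after an $\operatorname{SL}(2,\mathbb{Z})$ change of coordinates that makes the flow transverse to it. The first-return map is then a near-integrable exact area-preserving twist map of the annulus $S^1\times[1,2]$. Exactness of the perturbation holds because $\omega-\omega_0$ is closed and vanishes to high order along $T$, so its restriction to the shell is exact. It follows that the return map has the intersection property, so Moser's twist theorem applies.

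Moser's theorem gives invariant circles with Diophantine rotation numbers in the interior of the annulus. These correspond to invariant tori in the shell with Diophantine rotation directions. They are linear by the KAM conjugacy, and they are twist, since the twist condition is open and the tori are $C^\infty$-close to the integrable tori $\{s=\text{const}\}$.

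Letting $\delta=\delta_k\to0$ on each side produces twist KAM tori $T_k^\pm\subset N_\pm$. These tori are $C^\infty$-close to graphs $\{r=\text{const}\}$ with graph function tending to $0$. Hence they converge to $T$ in $C^\infty$, provided the $C^k$ size of the rescaled perturbation is controlled uniformly, which the $O(\delta^n)$ estimate with $n$ chosen depending on $k$ ensures.

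The main obstacle is the uniform rescaling estimate. We must show that in the scaled coordinates the error term has $C^k$ norm $o(1)$ while the twist stays bounded below. We must also check that the Diophantine constants required by Moser's theorem can be met for some rotation directions within the small arc swept over each shell. The first thing to try is choosing $n\ge k+2$ in Proposition~\ref{prop:Birkhoff_normal_form} and tracking powers of $\delta$ under the derivatives.
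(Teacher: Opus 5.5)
Your outline is a legitimate alternative to the paper's much shorter proof. The paper takes an annular Poincar\'e section $\Sigma$ through $T$ and notes, via Remark~\ref{rem:twist_tori_and_circles}, that $C=T\cap\Sigma$ is a twist Diophantine invariant circle of the local return map. It then cites Theorem~\ref{thm:accumulation_twist_diophantine_invariant_circles} (Moser, or Fayad--Krikorian) for accumulation by twist Diophantine circles, and sweeps those circles out into tori. Your route (high-order normal form, rescaling to shells, Moser's twist theorem) is essentially the standard proof of that cited theorem, so you are reproving it by hand. That is fine in principle. However, the quantitative core of your argument is stated incorrectly, and as written the key step does not go through.

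\textbf{The scaling claim is false.} You assert that after rescaling the twist in $s$ is of order $1$, ``absorbed by the rescaling of $\omega$''. Multiplying $\omega$ by the constant $1/\delta$ does not change $\ker\omega$, so it changes no rotation direction.
\begin{itemize}
\item On $\{s=s_0\}$ the direction is that of $a(\delta s_0)+b(\delta s_0)\sqrt{-1}$, whose $s$-derivative is $\delta$ times its $r$-derivative.
\item On the section $\{x=0\}$ the return map is $(y,s)\mapsto(y+\rho(\delta s),s)+O(\delta^n)$ with $\rho=b/a$. Its twist is therefore $\delta\rho'(0)+O(\delta^2)$, and the rotation numbers over a shell fill an interval of length of order $\delta$.
\item Indeed $\omega_0/\delta\to ds\wedge(-b(0)\,dx+a(0)\,dy)$ as $\delta\to0$, a form with no twist at all.
\end{itemize}
So your program ``error $o(1)$ in $C^k$ while the twist stays bounded below'' cannot be carried out, and an $o(1)$ error is not enough to apply Moser's theorem.

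\textbf{What the argument actually needs.} You need a version of Moser's theorem whose smallness threshold is explicit in the twist $\mu$ and in the Diophantine constant $\gamma$ of the target rotation number; this threshold degenerates only polynomially as $\mu,\gamma\to0$. You also need a measure count showing that the arc $\rho([\delta,2\delta])$ contains $(\gamma,\tau)$-Diophantine numbers with $\gamma$ bounded below by a constant times $\delta$. The threshold is then at least $\delta^{C}$ for some fixed $C$, and choosing $n>C$ closes the argument.

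This, and not a loss of powers of $\delta$ under differentiation, is why a high-order normal form is needed. There is no such loss: $\partial_s=\delta\,\partial_r$ exactly compensates, so the rescaled remainder is $O(\delta^n)$ in every $C^k$ for any fixed $n$. Your concern about meeting the Diophantine constants on a small arc is the right one, but it is inseparable from the small twist.

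\textbf{The twist of the new tori.} The same issue affects your claim that the new tori are twist ``by openness''. Their first Birkhoff invariant is $\delta\rho'(0)$ plus an error. You therefore need the KAM normal form near each new circle to be controlled to within $o(\delta)$. $C^\infty$-closeness of the tori as sets to $\{s=\mathrm{const}\}$ does not give this.
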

Theorems~\ref{thm:KAMrobust} and \ref{thm:KAMstable} are consequences of results of Moser and R\"ussmann on invariant circles of surface diffeomorphisms after passing to an annular local Poincar\'e section. We explain this in more detail in Section~\ref{sec:KAM_proofs}.

We can now formulate the key notion of a KAM splitting. 

\begin{defi}\label{def:KAMsplit}
A \emph{KAM splitting} of $(M,\omega)$ is a finite collection $\mathcal{T} = \{T_1, \ldots, T_r\}$ of KAM tori such that the following holds. Let $N \subset M$ be the closure of any component of $M \setminus\bigsqcup_{i=1}^r T_i$, compactifying each end with the appropriate $T_i$. Then $N$ satisfies at least one of the following two conditions:
\begin{itemize}
    \item [-] There exists a contact form $\alpha$ on $N$ such that $\omega|_N = d\alpha$;
    \item [-] The characteristic foliation of $\omega|_N$ admits a global cross section.  
\end{itemize}
A KAM splitting $\{T_1, \ldots, T_r\}$ is called \emph{twist} if each KAM torus $T_i$ is twist.
\end{defi}
\begin{Remark}\label{rem:KAMsplitting_endcompactification}
A priori, in a KAM splitting as defined above, the following slightly ill-behaved situation could occur: two boundary components of one of the domains $N$ are identified with a single torus in $\mathcal{T}$. One way to avoid this issue is to work with end compactifications of the connected components of $M\setminus \bigsqcup_{i=1}^rT_i$ instead of their closures. For simplicity of the upcoming proofs, we stick to closures and assume that we never have two boundary components of a domain $N$ corresponding to the same torus. This can be assumed without loss of generality for the following reason. Indeed, suppose there is a component $N$ with two boundary components corresponding to the same torus $T$. The invariant torus $T$ is accumulated by other KAM tori (by Theorem \ref{thm:KAMstable} above if $T$ is twist, otherwise by adapting the main result in \cite{fk09} to Hamiltonian structures, along the lines of what is done in Section \ref{sec:KAM_proofs}). By adding one of those accumulating tori, call it $T'$ (which is twist if $T$ was twist), to the KAM splitting, we add a new component to the complement of $\mathcal{T}$ of the form $V\cong T^2\times (a,b)$, where the flow can easily be seen to admit a global cross section (if the new torus is chosen close enough to $T$). That is, the former component $N$ is now divided into (the closure of) $N\setminus V$ and  $V$. This construction can be done on any torus where this issue happens, giving a new KAM splitting (which is twist if the original one was) of the Hamiltonian structure for which no component $N$ has two boundary components corresponding to the same invariant torus.
\end{Remark}

The goal of this section is to prove Theorem \ref{thm:KAMequi}, namely, that if $(M,\omega)$ admits a twist KAM splitting and $\omega$ is rational, then it satisfies the neighboring equidistribution property.

\subsection{Contact and mapping torus analogues}

Previous work by Irie \cite{Ir2} and Pirnapasov--Prasad \cite{PP} established analogs of Theorem~\ref{thm:KAMequi} for closed contact $3$-manifolds and rational area-preserving diffeomorphisms of compact surfaces with boundary. Our proof of Theorem~\ref{thm:KAMequi} makes essential use of these results. 

\paragraph{Near equidistribution in the contact case.} We start by stating the following equidistribution result of Irie \cite[Proposition 3.6]{Ir2}. 

\begin{prop}\label{prop:equidistribution_contact}
    Let $Y$ be a closed, oriented $3$-manifold and let $\lambda$ be any contact form on $Y$. Fix any $\varepsilon > 0$ and any finite collection $f_1, \ldots, f_k$ of smooth functions on $Y$. Then, there exists a $C^\infty$-small smooth function $h$ such that the contact form $\lambda' = e^h\lambda$ admits a characteristic current $c' \in \mathcal{C}_{\mathbb{R}}(d\lambda')$ satisfying
    \begin{equation} \label{eq:equidistribution_contact} \Big|\int_{c'} f_i\lambda' - \int_Y f_i\lambda' \wedge d\lambda'\Big| < \varepsilon \end{equation}
    for each $i = 1,\ldots,k$.
\end{prop}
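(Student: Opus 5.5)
This proposition is quoted from Irie \cite[Proposition 3.6]{Ir2}. The plan below is the route I would take to reprove it, and it follows the strategy of \cite{Ir,Ir2}. The main tool is embedded contact homology (ECH), through its volume property (Cristofaro-Gardiner--Hutchings--Ramos). The ECH spectral invariants $c_\sigma(Y,\lambda)$ satisfy
\[
\lim_{k\to\infty}\frac{c_{\sigma_k}(Y,\lambda)^2}{k}=2\operatorname{vol}(Y,\lambda)
\]
along a suitable U-sequence $\sigma_k$, where $\operatorname{vol}(Y,\lambda)=\int_Y\lambda\wedge d\lambda$. Each value $c_\sigma(Y,\lambda)$ is the total action of an orbit set, that is, a finite union of Reeb orbits with positive integer multiplicities. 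Both the spectral values and the volume are continuous in $\lambda$.

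Consider the family $\lambda_t=e^{th}\lambda$, with $h$ ranging over the span of $f_1,\dots,f_k$ scaled small, together with generic extra directions. Let $\bar c(t)=\lim_k c_{\sigma_k}(\lambda_t)/\sqrt{2k}$, so $\bar c(t)=\operatorname{vol}(\lambda_t)^{1/2}$. Differentiating at $t=0$ gives
\[
\frac{d}{dt}\Big|_{t=0}\operatorname{vol}(\lambda_t)^{1/2}=\frac{\int_Y h\,\lambda\wedge d\lambda}{\operatorname{vol}(\lambda)^{1/2}},
\]
since $\frac{d}{dt}\operatorname{vol}(\lambda_t)=2\int_Y h\,\lambda_t\wedge d\lambda_t$.

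On the other side, for a nondegenerate form and a fixed orbit set $\Gamma$ realizing $c_\sigma(\lambda_t)$, the derivative of the action of $\Gamma$ in the direction $h$ is $\int_\Gamma h\lambda_t$. Irie's key step is to use the Lipschitz continuity of $t\mapsto c_\sigma(\lambda_t)$ and its almost-everywhere differentiability, then pick $t$ and an orbit set whose weighted normalization $c'=\Gamma/\sqrt{2k}$, after dividing by total action, has $h$-derivative matching the volume derivative. This is done simultaneously for the finitely many $f_i$. Concretely, I would work with a multi-parameter family $\lambda_{\mathbf t}=\exp(\sum_i t_if_i)\lambda$ for $\mathbf t$ in a small cube.

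Lebesgue differentiation and a mean-value argument then produce a point $\mathbf t$, arbitrarily close to $0$, together with a rational combination of orbit sets. For this combination, the vector $\big(\int_{c'}f_i\lambda_{\mathbf t}\big)_i$ is $\varepsilon$-close to the gradient of the normalized volume, which equals $\big(\int_Y f_i\lambda_{\mathbf t}\wedge d\lambda_{\mathbf t}\big)_i$. A convexity argument converts the "subdifferential" information into a positive combination of orbit sets: average over the finitely many orbit sets that realize the spectral value nearby. The result is a positive-real-weighted characteristic current $c'\in\mathcal{C}_{\mathbb R}(d\lambda')$ with $\lambda'=e^{h}\lambda$, $h=\sum t_if_i$ small. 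Choosing $\mathbf t$ generic makes $\lambda'$ nondegenerate, and $h$ is $C^\infty$-small because $|\mathbf t|$ is.

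The main obstacle is the step that converts differentiability of the asymptotic spectral function into the statement about a single finite current. The limit in $k$ and the derivative in $\mathbf t$ need not commute. I would handle this as Irie does. Fix a large $k$ so that the volume asymptotics hold uniformly on the parameter cube, up to error $\varepsilon$. Apply the fundamental theorem of calculus to the Lipschitz function $\mathbf t\mapsto c_{\sigma_k}(\lambda_{\mathbf t})/\sqrt{2k}$ along segments. This gives an average of the derivatives $\int_{\Gamma_{\mathbf t}}f_i\lambda_{\mathbf t}$ that is close to the average of the volume derivative. A pigeonhole argument then picks a single good parameter. Making this simultaneous in all $k$ directions $f_i$ is exactly why the cube is needed.
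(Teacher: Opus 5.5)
The paper does not prove this statement; it imports it from Irie \cite[Proposition 3.6]{Ir2}. Your ingredients are the right ones: the ECH volume property, the family $\lambda_{\mathbf{t}}=\exp(\sum_i t_if_i)\lambda$, and the fact that $F_k(\mathbf{t}):=c_{\sigma_k}(\lambda_{\mathbf{t}})/\sqrt{2k}$ is Lipschitz, with a.e.\ gradient given by an action-gradient $(\int_{\Gamma}f_i\lambda_{\mathbf{t}})_i/\sqrt{2k}$ of an orbit set. However, the step you yourself flag as the main obstacle is not resolved by what you propose.

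For fixed $k$, the fundamental theorem of calculus only shows that the \emph{average} of $\nabla F_k$ over a cube is close to the average of $\nabla F$, where $F(\mathbf{t})=\operatorname{vol}(\lambda_{\mathbf{t}})^{1/2}$. No pigeonhole argument turns this into a single $\mathbf{t}$ at which the vector $\nabla F_k(\mathbf{t})$ is close to $\nabla F(\mathbf{t})$. Nothing prevents $F_k$ from being a fine sawtooth, uniformly close to $F$, whose gradients are everywhere far from $\nabla F$; for instance, the realizing orbit set may concentrate in a small region and switch as $\mathbf{t}$ moves. Pigeonhole gives only one-sided information in one direction (a parameter where $\partial_eF_k$ is at least its average). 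That is enough to force an orbit through the support of a function, but not to make several integrals two-sidedly close at once.

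Your convexity step does not fill this. Averaging orbit sets that realize the spectral value at nearby parameters combines closed orbits of different contact forms $\lambda_{\mathbf{t}'}$, so the result is not an element of $\mathcal{C}_{\mathbb{R}}(d\lambda')$ for a single $\lambda'$. Persistence of orbits of a nondegenerate $\lambda_{\mathbf{t}}$ does not rescue it either. The persistence radius for orbit sets of action $\sim\sqrt{k}$ is uncontrolled, while the cube must be large compared to the level-$k$ error in the volume asymptotics for the boundary-value argument to say anything.

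The missing idea is to freeze the parameter and let $k$ vary.
\begin{itemize}
\item The $F_k$ are equi-Lipschitz on the cube, because $e^{\min h}c_\sigma(\lambda)\le c_\sigma(e^h\lambda)\le e^{\max h}c_\sigma(\lambda)$. They converge uniformly to the smooth function $F$, so integrating by parts against test functions gives $\nabla F_k\rightharpoonup\nabla F$ weakly in $L^2$.
\item By Mazur's lemma, finite convex combinations of the $\nabla F_k$ converge to $\nabla F$ in $L^2$, hence almost everywhere along a subsequence. So for almost every $\mathbf{t}$, $\nabla F(\mathbf{t})$ is a limit of convex combinations $\sum_k a_k\nabla F_k(\mathbf{t})$ evaluated at that \emph{same} $\mathbf{t}$.
\item Choose such a $\mathbf{t}$ near $0$ at which all $F_k$ are differentiable and $\lambda_{\mathbf{t}}$ is nondegenerate; this is where your extra generic directions are needed.
\item Near $\mathbf{t}$, each $F_k$ agrees pointwise with one of finitely many smooth action functions of persisting orbit sets. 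Since the parameter space is not a finite union of hyperplanes, $\nabla F_k(\mathbf{t})$ is the action-gradient of a single orbit set $\Gamma_k$ of $\lambda_{\mathbf{t}}$.
\item Since $\partial_iF(\mathbf{t})=\int_Yf_i\lambda_{\mathbf{t}}\wedge d\lambda_{\mathbf{t}}/\operatorname{vol}(\lambda_{\mathbf{t}})^{1/2}$, the current $c'=\operatorname{vol}(\lambda_{\mathbf{t}})^{1/2}\sum_ka_k\Gamma_k/\sqrt{2k}$ satisfies the required estimate for $\lambda'=\lambda_{\mathbf{t}}$.
\end{itemize}
This same-parameter, many-$k$ convexity argument is, in essence, the real-analysis input behind Irie's proof.
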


\paragraph{Near equidistribution in the surface case.} Next, we provide the analogous statement for rational area-preserving diffeomorphisms of surfaces with boundary. The following statement can be deduced from~\cite{PP}.

\begin{prop}\label{prop:equidistribution_surface}
    Let $(\Sigma, \tau)$ be a compact symplectic surface, possibly with non-empty boundary. Let $\phi: \Sigma \to \Sigma$ be any rational area-preserving diffeomorphism. Fix any $\varepsilon > 0$ and any finite collection $f_1, \ldots, f_k$ of smooth functions on $\Sigma$. Then, there exists a $C^\infty$-small smooth Hamiltonian $H: S^1 \times \Sigma \to \mathbb{R}$ such that $H_t$ is locally constant on $\partial\Sigma$ for each $t \in S^1$, and $\phi' = \phi \circ \psi^1_H$ admits a current $\mathbf{x}' \in \mathcal{P}_{\mathbb{R}}(\phi')$ satisfying
    \begin{equation}\label{eq:equidistribution_surface}\Big|\mathbf{x}'(f_i) - \int_\Sigma f_i\tau \Big| < \varepsilon\end{equation}
    for each $i = 1,\ldots,k$. 
    
\end{prop}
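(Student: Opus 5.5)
The plan is to reduce Proposition~\ref{prop:equidistribution_surface} to the closed-surface result of \cite{PP} by way of Lemma~\ref{lem:rational_extension_to_closed_surface}, and then restrict the perturbation back to $\Sigma$.

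\textbf{Step 1: the closed case.} If $\partial\Sigma=\emptyset$, our notion of rationality agrees with that of \cite{PP} (Remark~\ref{rem:different_notions_of_rationality}). The quantitative closing/equidistribution statement in \cite{PP} (and \cite{P}) then gives the conclusion directly. That statement says: for a rational $\phi$, a finite family $f_1,\dots,f_k$ and $\varepsilon>0$, a $C^\infty$-small Hamiltonian perturbation $\phi\circ\psi^1_H$ has an orbit set $\mathbf{x}'$ with $|\mathbf{x}'(f_i)-\int f_i\tau|<\varepsilon$. It is obtained by applying the PFH/Weyl-law argument to a generic one-parameter family of perturbations $H=\sum s_jH_j$.

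\textbf{Step 2: the boundary case, main approach.} Here the cleanest route is to use \cite{PP} directly. That paper treats compact surfaces with boundary, and its perturbations are Hamiltonians that are locally constant, indeed vanishing near $\partial\Sigma$. The only issue is the discrepancy in the definition of rationality.

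\textbf{Step 3: fixing the rationality discrepancy.} I would handle this with Lemma~\ref{lem:rational_extension_to_closed_surface}. Embed $\Sigma\subset\hat\Sigma$ area-preservingly, with $\hat\phi$ a rational extension on the closed surface $\hat\Sigma$. Extend each $f_i$ smoothly to $\hat f_i$ on $\hat\Sigma$, and add further test functions $g_1,\dots,g_m$ so that the family can control mass. The difficulty is that \cite{PP} on $\hat\Sigma$ gives orbit sets equidistributed for the area of $\hat\Sigma$, which also carry mass on $\hat\Sigma\setminus\Sigma$. The perturbation must also be supported in $\Sigma$ and be locally constant on $\partial\Sigma$.

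To deal with both problems, I would restrict the admissible perturbations $H_j$ in the Weyl-law argument to Hamiltonians supported in $\operatorname{Int}\Sigma$. The argument of \cite{PP} localizes: the spectral invariants vary only when the perturbation is supported where orbits are created, and the closing lemma produces orbits meeting the support. Then $\hat\phi\circ\psi^1_H$ still preserves $\Sigma$, because $\partial\Sigma$ is $\hat\phi$-invariant and $H$ vanishes near it. The orbit sets produced split into orbits in $\Sigma$ and orbits in the complement.

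The key point is that the equidistribution approximates the measure $\hat\tau$ on all of $\hat\Sigma$. Taking test functions $\hat f_i\chi$, with $\chi$ a cutoff approximating the indicator of $\Sigma$ away from a thin collar, we restrict $\mathbf{x}'$ to its orbits lying in $\Sigma$. This restricted orbit set approximates $\tau|_\Sigma$ up to the collar mass, which is made $<\varepsilon/2$ by choosing the collar thin. Near the boundary, $f_i$ is bounded, so the collar error is controlled by the total mass of orbits in the collar. That mass is in turn controlled by testing against a bump function equal to $1$ on the collar.

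\textbf{Main obstacle.} The step I expect to be hardest is Step 3: justifying that the \cite{PP} machinery can be run with perturbations supported in $\operatorname{Int}\Sigma$ while still yielding equidistribution on that subdomain. If this localization fails, the fallback is simply to cite \cite{PP}'s boundary version after rescaling. The cohomological condition there depends on total area, and the extension lemma shows that adjusting the collar areas makes $\phi$ rational in their stronger sense without changing the dynamics on $\Sigma$.
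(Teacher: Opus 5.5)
Your reduction is essentially the paper's. The closed case is \cite[Proposition 3.4]{PP} with $L=\varnothing$. For $\partial\Sigma\neq\varnothing$ the paper cites the density argument in the proof of \cite[Theorem 1.6]{PP}, whose only real input is a rational extension $\hat\phi$ to a closed surface $\hat\Sigma$, supplied by Lemma~\ref{lem:rational_extension_to_closed_surface}. So the localization you sketch in Step 3 is already carried out in \cite{PP} and need not be redone.

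Your fallback, however, would not work as stated. Applying the boundary result of \cite{PP} to the collared surface gives a perturbation and orbit set living on that larger surface. Nothing guarantees that $\Sigma$ stays invariant or that the orbits localize to $\Sigma$, so it is the closed rational extension fed into the argument of \cite{PP} that does the job.
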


\begin{proof}
    If $\partial\Sigma=\varnothing$, the proposition follows from \cite[Proposition 3.4]{PP} with $L=\varnothing$. If $\partial\Sigma\ne\varnothing$, it follows from the density of the set $\mathcal H_N$ established in the proof of Theorem~1.6 in Section~4.4 of~\cite{PP}.

    There is a small subtlety caused by the fact that the notion of rationality of area-preserving diffeomorphisms used in~\cite{PP} is slightly more restrictive than the notion of rationality we adopt in the present paper, see Remark~\ref{rem:different_notions_of_rationality}. However, this does not cause problems because what is actually needed in the proof of Theorem~1.6 in~\cite{PP} is an embedding of $\Sigma$ into a closed surface $\hat\Sigma$ and an extension of $\phi$ to a rational area-preserving diffeomorphism $\hat\phi$ of $\hat\Sigma$. The existence of such $\hat\Sigma$ and $\hat\phi$ is established in Lemma~\ref{lem:rational_extension_to_closed_surface}.
\end{proof}

It is most convenient for our purposes to work in mapping tori of surfaces.  The following proposition represents a translation of Proposition~\ref{prop:equidistribution_surface} to this setting. The statement involves the map $f_H$ from \eqref{eq:mt_bijection}. 

\begin{prop}\label{prop:equidistribution_mt}
    Let $(\Sigma, \tau)$ be a compact symplectic surface, possibly with non-empty boundary. Let $\phi: \Sigma \to \Sigma$ be any rational area-preserving diffeomorphism. Fix any $\varepsilon > 0$ and any finite collection $f_1, \ldots, f_k$ of smooth functions on $Y_\phi$. Then, there exists a $C^\infty$-small smooth Hamiltonian $H: Y_\phi \to \mathbb{R}$ such that $H$ is locally constant on the boundary of each fiber of $Y_\phi$, and $\phi' = \phi \circ \psi^1_H$ admits a characteristic current $c' \in \mathcal{C}_{\mathbb{R}}(\phi')$ satisfying
    \begin{equation}\label{eq:equidistribution_mt}\Big|\int_{f_H^{-1}(c')} f_i dt  - \int_{Y_{\phi}} f_i dt \wedge \tau_{\phi} \Big| < \varepsilon\end{equation}
    for each $i = 1,\ldots,k$. 
\end{prop}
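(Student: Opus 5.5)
We deduce the statement from Proposition~\ref{prop:equidistribution_surface} by translating between the mapping-torus picture and the surface picture. The translation uses the bijection $\mathcal{P}_\R(\phi')\leftrightarrow\mathcal{C}_\R(\phi')$ together with the fiber-integration identity $c(\alpha)=\mathbf{x}(f_\alpha)$.

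First, given smooth functions $f_1,\dots,f_k$ on $Y_\phi$, consider the $1$-forms $f_i\,dt$ on $Y_{\phi}$. Define $g_i:=f_{f_i dt}$, i.e.\ $g_i(x)=\int_0^1 f_i(t,x)\,dt$ after lifting to $[0,1]\times\Sigma$. By Fubini,
$$\int_{Y_\phi} f_i\,dt\wedge\tau_\phi=\int_\Sigma g_i\,\tau.$$
(Here we must check that the orientation convention makes $dt\wedge\tau_\phi$ the positive volume form.) Apply Proposition~\ref{prop:equidistribution_surface} to $\phi$, the functions $g_1,\dots,g_k$, and a tolerance $\varepsilon/2$. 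This yields a small $H:S^1\times\Sigma\to\R$, locally constant on $\partial\Sigma$, and $\mathbf{x}'\in\mathcal{P}_\R(\phi')$ with $|\mathbf{x}'(g_i)-\int_\Sigma g_i\tau|<\varepsilon/2$. We view $H$ as a function on $Y_\phi$. There is a minor point here: Proposition~\ref{prop:equidistribution_surface} produces $H$ on $S^1\times\Sigma$ rather than on $Y_\phi$. Since only the time-$1$ map $\psi^1_H$ matters, we can reparametrize $H$ in time so that it vanishes near $t=0,1$, and hence descends to $Y_\phi$ while keeping it $C^\infty$-small.

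Next, let $c'\in\mathcal{C}_\R(\phi')$ correspond to $\mathbf{x}'$, and compute $\int_{f_H^{-1}(c')}f_i\,dt$. The map $f_H$ preserves the $t$-coordinate and sends $[(t,x)]$ to $[(t,(\psi_H^t)^{-1}(x))]$. Hence $f_H^{-1}(c')$ is a union of arcs $t\mapsto(t,\psi^t_H(x_j))$, where the $x_j$ are the points of $\mathbf{x}'$ (up to the gluing convention at $t=0$). Therefore
$$\int_{f_H^{-1}(c')}f_i\,dt=\sum_j a_j\int_0^1 f_i(t,\psi^t_H(x_j))\,dt .$$
This differs from $\mathbf{x}'(g_i)=\sum_j a_j\int_0^1 f_i(t,x_j)\,dt$ by at most
$$\|\nabla f_i\|_{C^0}\,\sup_{t,x} d\bigl(\psi^t_H(x),x\bigr)\cdot\sum_j a_j\,|\mathbf{x}_j| .$$
The total mass $\sum_j a_j|\mathbf{x}_j|=\mathbf{x}'(1)$ is controlled by including $g_0\equiv1$ among the test functions, which gives $\mathbf{x}'(1)\le\int_\Sigma\tau+1$.

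The main obstacle is the order of quantifiers: $H$ must be small enough relative to $\varepsilon$ so that the displacement error above is below $\varepsilon/2$. To handle this, I would use that Proposition~\ref{prop:equidistribution_surface} allows $H$ in any prescribed $C^\infty$-neighborhood of $0$. Choose that neighborhood, depending on $\varepsilon$, the $f_i$, and the area of $\Sigma$, so that $\sup_{t,x} d(\psi^t_H(x),x)$ is small enough. The triangle inequality then gives \eqref{eq:equidistribution_mt}.
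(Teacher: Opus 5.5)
Your proposal is correct and follows the paper's proof essentially step by step. Both arguments define the fiber averages $g_i$ and use Fubini, apply Proposition~\ref{prop:equidistribution_surface} with $g_0\equiv 1$ added to get a uniform mass bound on $\mathbf{x}'$, reparametrize $H$ in time so it descends to $Y_\phi$, and bound $\bigl|\mathbf{x}'(g_i)-\int_{f_H^{-1}(c')}f_i\,dt\bigr|$ by the displacement of $\psi_H^t$ times the mass; your explicit handling of the quantifier order is the same point the paper uses when it chooses $\delta<\varepsilon/(2\mathbf{x}(1))$.
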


\begin{proof}
For each $i$, define a smooth function $g_i: \Sigma \to \mathbb{R}$ by
$$x \mapsto \int_0^1 f_i(t, x) dt$$
where we view $f_i$ as a function on $[0,1] \times \Sigma$. It follows from Fubini's theorem that
\begin{equation}
\label{eq:fubini_consequence}
\int_\Sigma g_i \tau = \int_{Y_\phi} f_i dt \wedge \tau_{\phi}.
\end{equation}

Apply Proposition~\ref{prop:equidistribution_surface} to the functions $g_0\equiv 1, g_1, \ldots, g_k$. Then, there exists a $C^\infty$-small smooth Hamiltonian $H: [0,1] \times \Sigma \to \mathbb{R}$, such that $H_t$ is locally constant on $\partial\Sigma$ for each $t$, and $\mathbf{x} \in \mathcal{P}_{\mathbb{R}}(\phi^H)$ such that
\begin{equation}\label{eq:mapping1}
    \Big|\mathbf{x}(g_i) - \int_\Sigma g_i\tau \Big| < \frac{\varepsilon}{2}
\end{equation}
for each $i$. The function $g_0$ is added to have a uniform bound on the mass $\mathbf{x}(1)$ of $\mathbf{x}$, independent of the $C^\infty$ size of $H$. After applying a time reparameterization, we may assume that $H_t \equiv 0$ for $t$ near $0$ and $1$, and that $ H_t$ remains $C^\infty$-small. Therefore, we may view $H$ as a smooth function on $Y_\phi$ that is locally constant on the boundary of each fiber. 

Let $\phi' := \phi^H$ and let $c' \in \mathcal{C}_\mathbb{R}(\phi')$ be the characteristic current induced by $\mathbf{x}$. Let us attempt to relate $\mathbf{x}(g_i)$ and $\int_{f_H^{-1}(c')} f_i dt$. Since $H$ is $C^\infty$-small, we may assume that for an arbitrarily small $\delta$, which will be adjusted later in the proof, we have
\begin{equation}\label{eq:mapping2}
    \sup_{(t, x) \in [0,1] \times \Sigma} |f_i(t, x) - f_i(t, \psi_H^t(x))| < \delta
\end{equation}
for each $i$.

Write $\mathbf{x}=\sum_j a_j\mathbf{x}_j$. Then $c'=\sum_j a_jc_j$, where $c_j$ is the image of $\bigcup_{x\in\mathbf{x}_j}[0,1]\times\{x\}$ under the quotient map $[0,1]\times\Sigma\to Y_{\phi'}$. We expand
$$\mathbf{x}(g_i) = \sum_j\sum_{x \in \mathbf{x}_j} a_j g_i(x) = \sum_j\sum_{x \in \mathbf{x}_j} \int_0^1 a_jf_i(t, x) dt$$
and
\begin{multline*}
\int_{f_H^{-1}(c')} f_i dt = \int_{c'} (f_i \circ f_H^{-1}) dt \\ = \sum_j\sum_{x \in \mathbf{x}_j} a_j\int_{[0,1] \times \{x\}} (f_i \circ f_H^{-1}) dt = \sum_j\sum_{x \in \mathbf{x}_j} a_j\int_0^1 f_i(t, \psi^t_H(x)) dt.
\end{multline*}
Using the bound \eqref{eq:mapping2} it follows that
\begin{equation}\label{eq:mapping3}
|\mathbf{x}(g_i) - \int_{f_H^{-1}(c')} f_i dt| < \delta \cdot \mathbf{x}(1)
\end{equation}
for each $i$. We choose $\delta<\frac{\varepsilon}{2\mathbf{x}(1)}$, which is possible because $\mathbf{x}(1)$ is uniformly bounded. Using the triangle inequality, and considering the bounds \eqref{eq:mapping3} and \eqref{eq:mapping1} and the identity \eqref{eq:fubini_consequence}, we deduce that
\begin{align*}
    \Big|\int_{f_H^{-1}(c')} f_i dt - \int_{Y_\phi} f_i dt \wedge \tau_\phi \Big| &\leq \Big|\int_{f_H^{-1}(c')} f_i dt - \mathbf{x}(g_i)\Big| + \Big|\mathbf{x}(g_i) - \int_\Sigma g_i\tau\Big|\\ 
    &+ \Big|\int_\Sigma g_i\tau - \int_{Y_\phi} f_i dt \wedge \tau_\phi\Big| \\
    &< \varepsilon,    
\end{align*}
concluding the proof.
\end{proof}

\subsection{Near equidistribution for forms supported away from the splitting tori}

We will first prove the following proposition, which will be used to deduce an analogous near equidistribution result for arbitrary forms later in Theorem \ref{thm:nearEqRobSplit}.

\begin{prop}\label{prop:eqCompSupp}
    Let $\omega$ be a rational Hamiltonian structure on $M$. Suppose that $(M,\omega)$ admits a twist KAM splitting given by a finite collection of tori $T_1,...,T_r$. Let $N \subset M$ be the closure of a connected component of $M\setminus (T_1\cup \dots \cup T_r)$. Let $\kappa$ be a one-form on $M$ such that $\kappa \wedge \omega > 0$. Let $\alpha_1,...,\alpha_k$ be one-forms on $M$ with compact support in the interior of $N$. Then, for any $\varepsilon>0$, there exists an arbitrarily $C^\infty$-small cohomologous perturbation $\omega'\in \Omega^2(N)$ of $\omega|_N$ whose characteristic foliation is tangent to the boundary of $N$, and a current $c'\in \mathcal{C}_{\mathbb{R}}(\omega')$ such that
    \[
        \Big| \int_{c'} \alpha_i - \int_N \alpha_i \wedge \omega' \Big| <\varepsilon, \qquad i=1,\ldots,k,
    \]
    and
    \[
        \int_{c'} \kappa < \int_N \kappa\wedge \omega'+\varepsilon.
    \]
\end{prop}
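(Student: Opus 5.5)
Since $N$ is a domain of the KAM splitting, it is either a contact domain ($\omega|_N = d\alpha$ for a contact form $\alpha$) or admits a global cross section. I will treat the two cases separately, reducing each to Proposition~\ref{prop:equidistribution_contact} or Proposition~\ref{prop:equidistribution_mt}. The constraint imposed by $\kappa$ is an upper bound on mass, and I will obtain it by adding to the list of test forms.

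\emph{Mapping torus case.} Identify $(N,\omega|_N)$ with $(Y_\phi,\tau_\phi)$ via the cross section, with $\phi$ area-preserving on a compact surface $\Sigma$ with boundary. First I will check that $\phi$ is rational. The boundary tori of $N$ are KAM tori, so the restriction of $\omega$ to each of them vanishes; rationality of $[\omega]$ on $M$ should then pass to $[\omega|_N]$ via restriction $H^2(M)\to H^2(N)$, since restriction preserves rational multiples. Next I will write each $\alpha_i$ and $\kappa$ as $g\,dt+(\text{terms vanishing on }\partial_t)$, and set $f_i := \alpha_i(\partial_t)$ and $f_0 := \kappa(\partial_t)$. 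Characteristic currents only see the $dt$-component, because their tangent is $\partial_t$, and $\int\alpha\wedge\tau_\phi = \int \alpha(\partial_t)\,dt\wedge\tau_\phi$. Proposition~\ref{prop:equidistribution_mt} then gives $H$ and $c'$. I will put $\omega' := f_H^*\tau_{\phi'} = \tau_\phi + dH\wedge dt$. This form is cohomologous to $\omega|_N$ by \eqref{eq:mt_pullback}, and it is $C^\infty$-small close to $\omega|_N$. Its characteristic foliation is tangent to $\partial N$ because $H$ is locally constant on fiber boundaries. The current is $f_H^{-1}(c')\in\mathcal C_\R(\omega')$. Two error terms remain: the difference between $\int\alpha_i\wedge\omega'$ and $\int\alpha_i\wedge\omega$, which is $O(\|H\|)$; and the fact that the tangent of the $\omega'$-characteristics is no longer exactly $\partial_t$. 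The latter I will handle as in the proof of Proposition~\ref{prop:equidistribution_mt}, by evaluating along $f_H^{-1}$ with a uniform mass bound coming from including $f_0$. Both errors are absorbed into $\varepsilon$. The $\kappa$ inequality follows from the two-sided estimate.

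\emph{Contact case.} Here $\omega|_N = d\alpha$ with $N$ having boundary, so Proposition~\ref{prop:equidistribution_contact} (stated for closed manifolds) does not apply directly. I intend to glue, or double, $N$ along its boundary tori into a closed contact manifold $Y$; this is the main obstacle. Near each boundary KAM torus I would use Proposition~\ref{prop:Birkhoff_normal_form}, or simply the fact that $\ker d\alpha$ is tangent to the torus, to normalize $\alpha$ to a $T^2$-invariant form $\alpha = g_1(r)dx+g_2(r)dy$ in a collar. I would then extend across a thickened torus/solid torus model, for instance by capping with standard solid tori (Dehn filling by an invariant contact form whose slope rotates), so that $\alpha$ extends to a contact form on a closed $Y\supset N$. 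The $\alpha_i$ have compact support in $\operatorname{Int}N$ and so extend by zero. I will then apply Proposition~\ref{prop:equidistribution_contact} on $Y$ to $\lambda=\alpha$ with the functions $\alpha_i(R)$ and $\kappa(R)$, together with cutoffs. This yields $\lambda' = e^h\alpha$ and $c'$.

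Restricting to $N$ creates two problems: $c'$ may leave $N$, and $h$ need not vanish at $\partial N$. For the first, I would arrange that the invariant tori are preserved, using Theorems~\ref{thm:KAMstable} and \ref{thm:KAMrobust}. Choose KAM tori in the collar close to $\partial N$; they persist for the exact perturbation $d(e^h\alpha)$ of $d\alpha$. So each orbit of $c'$ stays on one side, and I discard the orbits outside the region bounded by the perturbed tori. The remainder I will then pull back to $N$ by a diffeomorphism close to the identity, taking the perturbed tori to $\partial N$ and making $\omega'$ tangent there. The discarded mass near $\partial N$ is small because Irie's current approximates $\omega$ (test against a cutoff near the collar), so the estimates for $\alpha_i$ and $\kappa$ survive with $2\varepsilon$. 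The delicate points I expect are the extension of $\alpha$ to a closed contact manifold, and the fact that the resulting $\omega'$ is exact-perturbed, hence cohomologous to $\omega|_N$.
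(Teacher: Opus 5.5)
Your architecture is the paper's. The mapping torus case is exactly its argument: split each form into a $dt$-part and a part annihilating $\partial_t$, apply Proposition~\ref{prop:equidistribution_mt} with $f_0=\kappa(\partial_t)$ supplying the mass bound, and take $\omega'=\tau_\phi+dH\wedge dt$ with current $f_H^{-1}(\bar c)$. In the contact case the paper also caps $N$ with solid tori, applies Proposition~\ref{prop:equidistribution_contact}, restores invariance of $\partial N$ via Theorem~\ref{thm:KAMrobust} and a $C^\infty$-small conjugating isotopy, and keeps only the orbits inside $N$.

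Three smaller points:
\begin{itemize}
\item The auxiliary tori from Theorem~\ref{thm:KAMstable} are unnecessary. The boundary tori are themselves twist KAM tori of the capped form, because rotation direction and twist depend only on the jet of the form along $T$, which the smooth extension shares with $\omega$. So they can be continued directly.
\item You must treat the case $\lambda\wedge d\lambda<0$, which Definition~\ref{def:KAMsplit} allows. Irie's Reeb orbits are then closed characteristics traversed backwards, and the paper reverses their orientations.
\item For $\kappa$, the paper uses a one-sided trick. It extends $\kappa$ by a non-negative cutoff $\widetilde\kappa$ with $\widetilde\kappa\wedge\overline\omega\ge 0$, so discarded orbits contribute non-negatively and $\int_{c'}\kappa\le\int_{\bar c}\widetilde\kappa$. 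Your collar-mass argument also works, but only if the cutoff test form is positive on the characteristic direction (a non-negative cutoff times a framing), and you should say so.
\end{itemize}

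The genuine gap is the step you flag yourself: extending $\alpha$ across $\partial N$. Your proposed normalization fails as stated. If $\alpha=g_1(r)dx+g_2(r)dy$ on a collar of $\partial N$ inside $N$, then every torus $T^2\times\{r\}$ there is invariant for $d\alpha=\omega$. That would make $\omega$ integrable near $\partial N$, which is false in general. Proposition~\ref{prop:Birkhoff_normal_form} gives this only modulo $O(r^{n+1})$, and tangency of $\ker d\alpha$ to $T$ only tells you that $\alpha|_T$ is closed. If instead the collar lies outside $N$, you still have to connect $\alpha$ to the invariant model through contact forms, and naive interpolation need not preserve the contact condition.

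The paper supplies the missing ingredient with Giroux's theorem: the germ of a contact structure along a closed surface is determined by its characteristic foliation. The characteristic foliation of $\xi=\ker\alpha$ on $T$ is the kernel foliation of the closed non-vanishing form $\alpha|_T$. This can be realized on a concentric torus of a solid torus carrying a rotating $T^2$-invariant contact structure. The contact structures therefore glue, and $\alpha$ extends after multiplication by a positive function.

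Alternatively, your Birkhoff route can be repaired with an extra idea you did not state. Since the conclusion only asks for a small cohomologous perturbation, you may first replace $\omega$ on a collar of width $\delta$ by its degree-$n$ Birkhoff truncation. This is an exact change of $C^k$-size $O(\delta^{n+1-k})$, and it keeps $\partial N$ a twist KAM torus. You can then make the primitive $T^2$-invariant near $\partial N$ by the averaging argument of Case~1 in Lemma~\ref{lem:finitetori}.
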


\begin{proof}
    By definition of a twist KAM splitting, the region $N$ has contact type or admits a global cross section. We treat these two cases separately.
    
    \medskip
    \emph{The contact type case:} We assume that $N$ is of contact type. This means that $\omega|_N = d\lambda$ for some contact form $\lambda$. We may assume that $\lambda\wedge d\lambda>0$. Indeed, if $\lambda\wedge d\lambda<0$, apply the positive case on $-N$ with test forms $-\alpha_i$ and framing $-\kappa$, obtaining $\omega'$ and $c'_-$. Reversing every orbit orientation while retaining its positive weight gives a current $c'$ on $N$. For $\eta\in\{\alpha_1,\ldots,\alpha_k,\kappa\}$, we have $\int_{c'}\eta=\int_{c'_-}(-\eta)$ and $\int_N\eta\wedge\omega'=\int_{-N}(-\eta)\wedge\omega'$. Thus both required estimates hold on $N$, and the remaining conclusions are unchanged by reversing orientation.

    Our strategy is to apply Proposition~\ref{prop:equidistribution_contact}. In order to do so, we first extend $N$ to a closed contact manifold $\widetilde N$. This can be done as follows. Write $\xi := \ker(\lambda)$ for the contact structure on $N$ induced by $\lambda$. We glue a solid torus onto each connected component of the boundary of $N$. Let $\widetilde{N}$ denote the resulting closed manifold. The contact structure $\xi$ extends to a contact structure $\widetilde{\xi}$ on $\widetilde{N}$. This can be done, for example, by taking a contact structure on the solid torus such that the characteristic foliation on some concentric torus is isomorphic to that of $\xi$ along the corresponding boundary component, and appealing to the classical result of Giroux \cite[Section II, Proposition 1.2(b), p.~649]{Gir}. It follows that $\lambda$ also extends to a contact form $\widetilde{\lambda}$ on $\widetilde N$ such that $\ker(\widetilde{\lambda}) = \widetilde{\xi}$.

    Set $\widetilde\omega := d\widetilde\lambda$. Let $\widetilde{R}$ denote the Reeb vector field of $\widetilde{\lambda}$ and set $f_i := \alpha_i(\widetilde{R})$. Note that, by our assumptions on the $1$-forms $\alpha_i$, each function $f_i$ is compactly supported in $\operatorname{Int}(N) \subset \widetilde{N}$.
    
    Let $\hat \kappa$ be an arbitrary extension of $\kappa$ to a $1$-form on $\widetilde N$ such that $\hat \kappa \wedge \widetilde\omega> 0$. Using a suitable non-negative cutoff function, we obtain a $1$-form $\widetilde\kappa$ whose restriction to $N$ agrees with $\kappa$ and whose support is contained in a small neighborhood of $N$. Moreover, we have $\widetilde\kappa \wedge \widetilde\omega \geq 0$. Since $\widetilde\kappa$ was obtained from $\hat\kappa$ by multiplication by a non-negative function, we also have $\widetilde\kappa \wedge \eta \geq 0$ for any $2$-form $\eta$ sufficiently close to $\widetilde\omega$. We observe that we can choose the cutoff such that $\int_{\widetilde N} \widetilde \kappa\wedge \widetilde \omega$ is arbitrarily close to $\int_N \kappa \wedge \omega$.

    We define the function $f_0 :=\tilde\kappa (\widetilde R)$ and apply Proposition~\ref{prop:equidistribution_contact} to the contact form $\widetilde\lambda$ on $\widetilde N$ and the set of functions $f_0,\dots,f_k$. We deduce that there exists a $C^\infty$-small perturbation $\bar{\lambda}$ of $\widetilde{\lambda}$ and a characteristic current $\bar{c} \in \mathcal{C}_{\mathbb{R}}(d\bar{\lambda})$ such that
    \begin{equation}\label{eq:eqdstr_pf_2}\Big|\int_{\bar{c}} f_i\bar{\lambda} - \int_{\widetilde{N}} f_i\bar{\lambda} \wedge d\bar{\lambda}\Big| < \varepsilon\end{equation}
    for each $i$. Set $\overline\omega:=d\overline\lambda$.

    By Theorem~\ref{thm:KAMrobust}, the invariant boundary tori of $N$ are robust under perturbation. After conjugation by a $C^\infty$-small isotopy, we may therefore assume that the characteristic foliation of $\overline\omega$ is tangent to each of the original boundary tori of $N$, as opposed to small perturbations thereof. Estimate~\eqref{eq:eqdstr_pf_2} for $f_0$ gives a uniform mass bound on $\overline c$ near $N$, where the isotopy is supported. After decreasing the approximation tolerance in Proposition~\ref{prop:equidistribution_contact} if necessary, we may therefore assume that \eqref{eq:eqdstr_pf_2} continues to hold after conjugation.

    After the conjugation, each periodic orbit in the support of $\bar{c}$ is either completely contained in $N$ or does not intersect $N$ at all. Let $\omega'$ denote the restriction of $\overline\omega$ to $N$. Note that $\omega'$ is an exact perturbation of $\omega$ because both $\omega$ and $\omega'$ are exact on $N$.

    Let $c' \in \mathcal{C}_{\R}(\omega')$ be the intersection of $\overline c$ with $N$. It remains to check that $\omega'$ and $c'$ satisfy the desired estimates in the statement of the proposition.

    Since the functions $f_i$ for $i=1,\dots,k$ are compactly supported in $\operatorname{Int}(N)$, we deduce the identities
    \begin{equation}
    \label{eq:near_equidist_cpct_supp_proof_identity}
    \int_{\bar{c}} \alpha_i(\bar R) \bar{\lambda} = \int_{c'} \alpha_i,\qquad \int_{\widetilde{N}} \alpha_i(\bar R)\bar{\lambda} \wedge \overline\omega = \int_{N} \alpha_i \wedge \omega'
    \end{equation}
    for each $i = 1,\dots, k$. Here $\overline R$ denotes the Reeb vector field of $\overline \lambda$. On the other hand, we have
    \begin{equation}
    \label{eq:near_equidist_cpct_supp_proof_estimate_a}
    \Big|\int_{\bar c}\alpha_i(\bar R)\bar \lambda-\int_{\bar c}f_i \bar \lambda \Big| \leq \norm{\alpha_i(\bar R)-f_i} \int_{c'} \bar \lambda,
    \end{equation}
    and similarly
    \begin{equation}
    \label{eq:near_equidist_cpct_supp_proof_estimate_b}
    \Big|\int_{\widetilde{N}} \alpha_i(\bar R) \bar{\lambda} \wedge \overline\omega-\int_{\widetilde{N}}f_i\bar\lambda\wedge \overline\omega \Big|\leq \norm{\alpha_i(\bar R)-f_i} \int_{\widetilde{N}} \bar \lambda \wedge \overline\omega.
    \end{equation}
    In addition, estimate \eqref{eq:eqdstr_pf_2} for $f_0$ (which is positive on $N$) gives a uniform upper bound on the mass of $c'$ which is independent of how close we choose $\bar \lambda$ to $\tilde \lambda$. We can therefore make the right hand sides in estimates~\eqref{eq:near_equidist_cpct_supp_proof_estimate_a} and~\eqref{eq:near_equidist_cpct_supp_proof_estimate_b} arbitrarily small by choosing $\overline\lambda$ closer to $\widetilde\lambda$.

    Combining estimates~\eqref{eq:near_equidist_cpct_supp_proof_estimate_a} and~\eqref{eq:near_equidist_cpct_supp_proof_estimate_b}, identities~\eqref{eq:near_equidist_cpct_supp_proof_identity}, and estimate~\eqref{eq:eqdstr_pf_2} therefore yields the first of the two estimates in the statement of the proposition.
     
    An analogous argument shows that
    \begin{equation*}
        \Big| \int_{\overline{c}}\widetilde\kappa - \int_{\widetilde N} \widetilde\kappa \wedge \overline\omega \Big| < \varepsilon
    \end{equation*}
    Since $\widetilde\kappa \wedge \overline\omega\geq 0$, it follows that
    $$ \int_{c'}\kappa \leq \int_{\bar c} \tilde \kappa. $$
    Recall that $\tilde \kappa$ can be chosen such that $\int_{\widetilde N} \widetilde \kappa \wedge \widetilde\omega$ is arbitrarily close to $\int_N \kappa\wedge \omega$. Thus we can also arrange $\int_{\widetilde{N}}\widetilde\kappa \wedge \overline\omega$ to be arbitrarily close to $\int_N \kappa \wedge \omega'$. With the cutoff chosen sufficiently close to $N$, and after decreasing the earlier approximation tolerance and choosing the perturbation sufficiently small, we obtain the second required estimate. This concludes the proof of the proposition in the contact type case.

    \medskip
    \emph{The global cross section case:} Let us now assume that $N$ admits a global cross section $\Sigma \subset N$. The restriction $\tau := \omega|_\Sigma$ is an area form, and the first return map $\phi$ of the cross section $\Sigma$ is area-preserving with respect to $\tau$. The Hamiltonian structures $(N,\omega|_N)$ and $(Y_\phi,\tau_\phi)$ are diffeomorphic. An explicit diffeomorphism can be constructed as follows. Choose a nowhere vanishing vector field $R$ on $N$ which is positively tangent to the characteristic foliation and such that the first return time of the cross section $\Sigma$ is identically equal to $1$. Write $\phi^t$ for the flow of $R$. We define a diffeomorphism
    $$\Psi: Y_\phi \to N \qquad (t, p) \mapsto \phi^t(p).$$
    The pushforward of the vector field $\partial_t$ under this diffeomorphism is precisely $R$. Since the flow $\phi^t$ preserves $\omega$, it follows that the form $\Psi^*\omega$ restricts to $\tau$ on each surface fiber of $Y_\phi$, which further implies $\Psi^*\omega = \tau_\phi$. In the following, we identify $(N,\omega|_N)$ with $(Y_\phi,\tau_\phi)$ via this diffeomorphism.

    By assumption, the cohomology class $[\omega]$ is a real multiple of a rational class. Hence the same is true for the class $[\tau_\phi]$. This implies that $\phi$ is a rational area-preserving diffeomorphism.

    For $i = 1,\dots, k$ let us write the $1$-form $\alpha_i$ on $Y_\phi$ in the form $\alpha_i = f_i dt + \gamma_i$, where $f_i$ is a smooth function and $\gamma_i$ is a $1$-form such that $\gamma_i(R) = 0$. Similarly, we write $\kappa = f_0 dt + \gamma_0$.

    We apply Proposition~\ref{prop:equidistribution_mt} to the set of functions $f_0,\dots,f_k$. This yields a $C^\infty$-small Hamiltonian perturbation $\overline\phi = \phi \circ \psi_H^1$ of $\phi$ and a current $\overline c\in \mathcal{C}_\R(\overline \phi)$ such that
    \begin{equation}
    \label{eq:near_equidist_cpct_supp_proof_mapping_torus_case_estimate_a}
        \Big| \int_{f_H^{-1}(\overline c)} f_i dt - \int_{Y_\phi} f_i dt \wedge \tau_\phi \Big| < \varepsilon.
    \end{equation}
    We set
    \begin{equation*}
        \omega' := f_H^* \tau_{\overline\phi} = \tau_\phi + dH \wedge dt.
    \end{equation*}
    Clearly this is a $C^\infty$ small exact perturbation of $\tau_\phi$ whose characteristic foliation is tangent to the boundary of $Y_\phi$. Moreover, we set
    \begin{equation*}
        c' := f_H^{-1}(\overline{c}) \in \mathcal{C}_\R(\omega').
    \end{equation*}
    We need to check that $\omega'$ and $c'$ satisfy the desired estimates in the statement of the proposition.

    First, we observe that estimate~\eqref{eq:near_equidist_cpct_supp_proof_mapping_torus_case_estimate_a} for $i=0$ implies a uniform upper bound on the mass of $c'$ which is independent of the Hamiltonian perturbation of $\phi$. By choosing the Hamiltonian perturbation small, we can therefore make the integrals $\int_{c'}\gamma_i$ arbitrarily small. This implies that we can arrange $\int_{c'}\alpha_i$ to be arbitrarily close to $\int_{c'}f_i dt$ for $i = 1,\dots,k$ and $\int_{c'}\kappa$ to be arbitrarily close to $\int_{c'}f_0 dt$. We can also arrange $\int_{Y_\phi} f_i dt\wedge \tau_\phi$ to be arbitrarily close to $\int_{Y_\phi} \alpha_i \wedge \omega'$ and $\int_{Y_\phi} f_0dt\wedge \tau_\phi$ to be arbitrarily close to $\int_{Y_\phi} \kappa\wedge \omega'$. After decreasing the approximation tolerance in Proposition~\ref{prop:equidistribution_mt} and choosing the perturbation sufficiently small, we obtain the desired estimates.

\end{proof}

\subsection{Near equidistribution for twist KAM splittings}

We establish here a near-equidistribution theorem that will be the key step to prove Theorem \ref{thm:KAMequi}.

\begin{theorem}\label{thm:nearEqRobSplit}
       Let $M$ be a $3$-manifold with rational Hamiltonian structure $\omega$ that admits a twist KAM splitting $(M,\mathcal{T})$. For any $\varepsilon>0$, and any set of $1$-forms $\beta_1,...,\beta_k \in \Omega^1(M)$, there exists a $C^\infty$-small cohomologous perturbation $\omega'$ of $\omega$ and a current $c'\in \mathcal{C}_{\mathbb{R}}(\omega')$ such that
   $$  \Big|\int_{c'} \beta_i - \int_M \beta_i\wedge \omega' \Big|< \varepsilon, $$
   for each $i=1,\dots,k$.
\end{theorem}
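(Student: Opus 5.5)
We reduce to Proposition~\ref{prop:eqCompSupp} by localizing the test forms away from the splitting tori. The localization uses additional invariant tori from Theorem~\ref{thm:KAMstable}, which persist under small cohomologous perturbations by Theorem~\ref{thm:KAMrobust}.

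\textbf{Step 1: adding $\kappa$ and choosing thin collars.} Fix a framing $\kappa$ with $\kappa\wedge\omega>0$, and add $\kappa$ to the list of test forms; the case $\beta=\kappa$ will control the mass of currents. For each splitting torus $T_j$, use Theorem~\ref{thm:KAMstable} to choose twist KAM tori $T_j^\pm$ on both sides of $T_j$ and very close to it. These bound a thin collar $V_j\cong T^2\times[-1,1]$ with $\int_{V_j}\kappa\wedge\omega<\delta$. Pick a partition of unity $\chi_N+\sum_j\chi_{V_j}=1$. Here $\chi_N$ is supported in $\operatorname{Int}(N)$ for each domain $N$, and $\chi_{V_j}$ is supported in the slightly larger collar bounded by two further accumulating tori. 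Decompose $\beta_i=\sum_N\chi_N\beta_i+\sum_j\chi_{V_j}\beta_i$. The first pieces have compact support in the interiors of the domains $N$.

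\textbf{Step 2: perturbing each domain.} Apply Proposition~\ref{prop:eqCompSupp} on each domain $N$ with the test forms $\chi_N\beta_i$ and the framing $\kappa$. This gives a cohomologous perturbation $\omega'_N$, tangent to $\partial N$, and currents $c'_N$. The perturbations $\omega'_N$ need not agree near the $T_j$. To glue them, use the robustness of the tori $T_j^\pm$ (Theorem~\ref{thm:KAMrobust}): after conjugating by a small isotopy, each $\omega'_N$ keeps the tori $T_j^\pm$ invariant. Since $\omega'_N-\omega$ is exact on $N$, say $\omega'_N-\omega=d\eta_N$, define $\omega'=\omega+d\big(\sum_N\rho_N\eta_N\big)$, where $\rho_N$ is a cutoff equal to $1$ on $N$ minus the collars and $0$ near $\partial N$. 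The transition takes place inside the region between $T_j$ and $T_j^\pm$.

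\textbf{Step 3: the main obstacle.} The difficulty is that the cut-off region might destroy the invariance of $T_j^\pm$, so that orbits of $c'_N$ could leak into the modified region. To handle this, first apply Theorem~\ref{thm:KAMrobust} once more to the glued form $\omega'$. It is cohomologous to $\omega$ and $C^\infty$-close to it, so it has twist KAM tori $\tilde T_j^\pm$ close to $T_j^\pm$. Choose the nesting so that the cutoff is supported strictly between $T_j$ and an intermediate torus $T_j^{\pm,\mathrm{in}}$. Then $\omega'=\omega'_N$ on the region of $N$ cut off by the tori $T_j^{\pm,\mathrm{in}}$. These tori are invariant for $\omega'_N$ (after the isotopy of Step 2), so they are also invariant for $\omega'$. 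Keep only those orbits of $c'_N$ lying in that region. We may arrange that orbits meeting the collars carry little mass, by adding a test form concentrated there, so discarding them costs little. Call the resulting current $c'=\sum_N c'_N$.

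\textbf{Step 4: the error estimate.} For the terms $\chi_N\beta_i$, the estimate of Proposition~\ref{prop:eqCompSupp} holds up to $\varepsilon$. For the terms $\chi_{V_j}\beta_i$, bound both sides by $C\|\beta_i\|$ times the $\kappa$-mass in the collars. On the $\omega'$ side this mass is at most $\int_{V_j}\kappa\wedge\omega'\approx\delta$. On the current side, the mass bound $\int_{c'}\kappa<\int\kappa\wedge\omega'+\varepsilon$ from Proposition~\ref{prop:eqCompSupp} combines with near equidistribution of $\kappa$ outside the collars. Subtracting the two shows that $c'$ has $\kappa$-mass $O(\delta+\varepsilon)$ near the splitting tori. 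Choosing $\delta$ and $\varepsilon$ small gives the theorem.
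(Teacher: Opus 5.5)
Your proposal is correct and follows essentially the same route as the paper. You cut the test forms off near the splitting tori and insert auxiliary twist KAM tori via Theorem~\ref{thm:KAMstable}, which persist by Theorem~\ref{thm:KAMrobust}. You then apply Proposition~\ref{prop:eqCompSupp} on each domain with a framing among the test forms, glue by cutting off small primitives in a fixed collar, and keep only the orbits bounded by the persisting tori. Finally, you show that the mass does not concentrate near $\mathcal{T}$ by subtracting the equidistribution lower bound for the cut-off framing from the framing upper bound, exactly as in the paper's lemma.

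The differences are cosmetic. You conjugate by a small isotopy to pin the auxiliary tori, whereas the paper fixes the cutoffs $\psi_j$ in advance to equal $1$ on a region containing all nearby perturbed tori. The second application of Theorem~\ref{thm:KAMrobust} to the glued form in your Step 3 is unnecessary: the tori are invariant for $\omega'$ simply because $\omega'=\omega'_N$ near them.
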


\begin{proof}
     Let $\varepsilon>0$ and $\beta_1,\dots,\beta_k \in \Omega^1(M)$ be arbitrary. Without loss of generality, we assume that $\beta_1= \lambda$, a $1$-form such that $\lambda\wedge \omega>0$ and $\int_M \lambda \wedge \omega=1$. Let $\varphi:M\rightarrow [0,1]$ be a smooth cutoff function equal to $0$ in a small neighborhood $V$ of $\mathcal{T}$, and equal to $1$ outside a slightly larger neighborhood $V'\supset V$. Define forms $\alpha_i :=\varphi \beta_i$. Given an arbitrarily small $\varepsilon_1>0$ to be adjusted later in the proof, we can choose $V$ and $V'$ such that, for any $\beta_i$ and any $2$-form $\eta$ in a sufficiently small neighborhood of $\omega$, we have
    \begin{equation}\label{eq:cutoff}
        \Big| \int_M \beta_i \wedge \eta - \int_M \alpha_i \wedge \eta  \Big|<\varepsilon_1, \quad i=1,\dots,k.
    \end{equation}

    Let $N_1,\dots,N_m$ be the closures of the connected components of $M\setminus \mathcal{T}$. We let $\alpha_i^j$ denote the restriction of $\alpha_i$ to $N_j$. Note that $\alpha_i^j$ is compactly supported in the interior of $N_j$.

    We first fix the geometry for gluing local perturbations. By Theorem~\ref{thm:KAMstable}, for each boundary component $B$ of $N_j$ we can choose an inner parallel twist KAM torus $S_B$ of $\omega$, contained in $V$ and away from the support of $\varphi$. Let $N_j^0\subset\operatorname{Int}(N_j)$ be the domain bounded by these tori and containing the supports of the forms $\alpha_i^j$. Fix a smooth cutoff $\psi_j:M\to[0,1]$ supported in $\operatorname{Int}(N_j)$ and equal to $1$ on a neighborhood of $N_j^0$. The transition regions of $\psi_j$ lie between the tori $S_B$ and the corresponding boundary components $B$.

    By Theorem~\ref{thm:KAMrobust}, every sufficiently small cohomologous perturbation of $\omega|_{N_j}$ has nearby twist KAM tori bounding a domain $\widetilde N_j$ which contains the supports of all $\alpha_i^j$ in its interior and is contained in the region where $\psi_j=1$. Applying Proposition~\ref{prop:eqCompSupp} within this fixed neighborhood of $\omega|_{N_j}$ shows that for every $j = 1,\dots,m$ there is a $C^\infty$ small cohomologous perturbation $\omega_j\in \Omega^2(N_j)$ of $\omega|_{N_j}$ and a current $c_j\in\mathcal{C}_{\mathbb{R}}(\omega_j)$ such that
        \begin{equation}\label{eq:equidistNj}
            \Big|\int_{c_j} \alpha_i^j - \int_{N_j} \alpha_i^j\wedge \omega_j \Big|< \varepsilon_1, \quad i = 1,\dots, k
        \end{equation}  
    and
    \begin{equation}
    \label{eq:equidistNj_lambda}
        \int_{c_j} \lambda < \int_{N_j} \lambda\wedge \omega_j+\varepsilon_1.
    \end{equation}

    For each $j$, let $\widetilde N_j$ be the domain corresponding to the chosen perturbation $\omega_j$ in the preceding construction. Since $\omega_j$ and $\omega|_{N_j}$ are cohomologous, an auxiliary Hodge decomposition gives a primitive $\kappa_j$ of $\omega_j-\omega|_{N_j}$ which tends to zero in the $C^\infty$ topology as $\omega_j$ tends to $\omega|_{N_j}$. The cutoffs $\psi_j$ were fixed before choosing $\omega_j$. It follows that
    \[
        \omega'=\omega+\sum_{j=1}^m d(\psi_j\kappa_j)
    \]
    is an arbitrarily $C^\infty$-small cohomologous perturbation of $\omega$. By construction, $\omega'|_{\widetilde N_j}=\omega_j$.
 
    Since the characteristic foliation of $\omega_j$ is tangent to the boundary of $\widetilde N_j$, each closed characteristic appearing in $c_j$ with positive weight is either fully contained in $\widetilde N_j$ or does not intersect $\widetilde N_j$ at all. Let
    \begin{equation*}
     \tilde c_j=c_j|_{\widetilde N_j} \in \mathcal{C}_{\mathbb{R}}(\omega'|_{\widetilde N_j}) \subset \mathcal{C}_{\mathbb{R}}(\omega')
    \end{equation*}
    denote the restriction of $c_j$ to $\widetilde{N}_j$. Since the support of each $\alpha_i^j$ is contained in the interior of $\widetilde N_j$, estimate \eqref{eq:equidistNj} still holds with $\tilde c_j$ instead of $c_j$. Define $c'=\sum_{j=1}^m \tilde c_j$. Estimate~\eqref{eq:equidistNj} yields 
 \begin{equation}\label{eq:boundalpha}
     \Big|\int_{c'} \alpha_i - \int_M \alpha_i\wedge \omega' \Big|=\Big| \sum_{j=1}^m \Big(\int_{\tilde c_j} \alpha_i^j - \int_{N_j} \alpha_i^j \wedge \omega'\Big) \Big|< m\varepsilon_1. 
 \end{equation}
Since we can arrange $\omega'|_{N_j}$ to be arbitrarily $C^\infty$ close to $\omega_j$, we can in particular guarantee that
\begin{equation*}
    \Big|\int_{N_j} \lambda\wedge (\omega'-\omega_j)\Big| <\varepsilon_1.
\end{equation*}
Estimate~\eqref{eq:equidistNj_lambda} yields
\begin{equation}\label{eq:massbound}
     \int_{c'} \lambda=\sum_j \int_{\tilde c_j}\lambda \leq \sum_j \int_{c_j}\lambda<\sum_j\Big(\int_{N_j}\lambda\wedge \omega_j+ \varepsilon_1\Big) < \int_M\lambda\wedge \omega'+2m\varepsilon_1.
 \end{equation}

We claim that if $\varepsilon_1$ is chosen small enough, then
        \begin{equation}\label{eq:aprox}
            \Big|\int_{c'} \beta_i - \int_M \beta_i\wedge \omega' \Big|< \varepsilon,\enspace i=1,...,k,
        \end{equation}
    thus establishing the theorem. To obtain such an inequality, we will need a last estimate, which roughly says that the mass of $c'$ does not concentrate near $\mathcal{T}$.
    \begin{lemma}
       Let $\varepsilon_2>0$. If $\varepsilon_1$ is sufficiently small, then
    $$ \Big|\int_{c'} \beta_i - \int_{c'} \alpha_i \Big|< \varepsilon_2.$$ 
    \end{lemma}
    \begin{proof}
    Let $K>0$ be a constant such that $|\beta_i(R')| \leq K \lambda(R')$ for any vector field $R'$ positively tangent to the characteristic foliation of $\omega'$. Note that this constant $K$ can be chosen to be independent of the $C^\infty$ small perturbation $\omega'$.
    
    Recall that $\beta_1 = \lambda$ and $\alpha_1 = \varphi \lambda$. Estimate~\eqref{eq:boundalpha} therefore yields
    \begin{equation}
    \label{eq:auxiliary_estimate_lemma_integral_beta_alpha_over_c}
        \int_M \varphi\lambda \wedge \omega' -m\varepsilon_1 \leq 
        \int_{c'} \varphi \lambda.
    \end{equation}
    We estimate
    \begin{align*}
        \Big|\int_{c'} \beta_i - \int_{c'}\alpha_i \Big| &= \Big| \int_{c'}(1-\varphi)\beta_i \Big| \\
        &\leq K \int_{c'} (1-\varphi) \lambda \\
        &< K \Big( \int_M\lambda \wedge \omega' +2m\varepsilon_1 - \int_M\varphi\lambda \wedge\omega' +m\varepsilon_1 \Big) \\
        &= K\Big( 3m\varepsilon_1 + \int_M(\beta_1 - \alpha_1) \wedge \omega' \Big) \\
        &< K (3m+1) \varepsilon_1
    \end{align*}
    Here the first equality is immediate from the definition of $\alpha_i$. The first inequality follows from the inequality $|\beta_i(R')| \leq K\lambda(R')$. The second inequality uses estimates~\eqref{eq:massbound} and~\eqref{eq:auxiliary_estimate_lemma_integral_beta_alpha_over_c}. The second equality follows from the definition of $\alpha_1$. The final inequality is a consequence of estimate~\eqref{eq:cutoff}.

    Now simply take $\varepsilon_1$ small enough such that $K(3m+1)\varepsilon_1 < \varepsilon_2$. This concludes the proof of the lemma.

    \end{proof}

To conclude, we have
    \begin{align*}
        \Big|\int_{c'} \beta_i - \int_M \beta_i\wedge \omega' \Big|& \leq \Big|\int_{c'} \beta_i - \int_{c'} \alpha_i \Big| + \Big| \int_{c'}\alpha_i - \int_{M} \alpha_i \wedge \omega' \Big|\\
         & + \Big|\int_M \alpha_i \wedge \omega' - \int_M \beta_i\wedge \omega' \Big|\\
         &< \varepsilon_2 + m\varepsilon_1 + \varepsilon_1.
    \end{align*}
    Now simply choose $\varepsilon_1,\varepsilon_2$ small enough such that $\varepsilon_2 + m\varepsilon_1 + \varepsilon_1 < \varepsilon$. This concludes the proof of the theorem.
\end{proof}

\subsection{Proof of Theorem \ref{thm:KAMequi}}

Let $M \subset (W,\Omega)$ be a hypersurface such that the induced Hamiltonian structure $\omega$ on $M$ is rational and admits a twist KAM splitting $(M,\mathcal{T})$. Let us identify a tubular neighborhood of $M$ in $W$ with $\R\times M$. Given a hypersurface $N$ sufficiently close to $M$, the natural projection $\pi: \R\times M \rightarrow M$ restricts to a diffeomorphism $\pi|_N : N \rightarrow M$. Pushing forward the Hamiltonian structure on $N$ induced by $\Omega$ via this diffeomorphism yields a Hamiltonian structure $\omega_N$ on $M$ which is an exact perturbation of $\omega$.

By Theorem \ref{thm:KAMrobust}, there is a neighborhood $\mathcal{U}\subset \mathcal{HS}(W)$ of $M$ such that, for every hypersurface $N$ in $\mathcal{U}$, the Hamiltonian structure $\omega_N$ admits a twist KAM splitting $\mathcal{T}_N$ close to $\mathcal{T}$.

Let $\{\beta_i\}_{i\in \mathbb{N}}$ be a countable family of $1$-forms in $\Omega^1(M)$ which is dense. We define a subset $\mathcal{U}_{k,\varepsilon} \subset \mathcal{U}$ as follows. A hypersurface $N$ is in $\mathcal{U}_{k,\varepsilon}$ if there is a current $c_N\in \mathcal{C}_{\mathbb{R}}(\omega_N)$ which is a linear combination of non-degenerate closed characteristics of $\omega_N$ such that 
$$ \Big| \int_{c_N} \beta_i - \int_M \beta_i \wedge \omega_N\Big|<\varepsilon, \text{ for } i=1,...,k.$$
Here a closed characteristic is called non-degenerate if the linearization of the first return map has no eigenvalue equal to $1$.

We claim that $\mathcal{U}_{k,\varepsilon}$ is open and dense. For $N\in \mathcal{U}_{k,\varepsilon}$, the closed characteristics showing up in $c_N$ with positive weights are non-degenerate by definition and therefore robust. This implies openness of $\mathcal{U}_{k,\varepsilon}$.

 Let us prove that $\mathcal{U}_{k,\varepsilon}$ is dense. Let $N$ be any hypersurface in $\mathcal{U}$. An application of Theorem \ref{thm:nearEqRobSplit} shows that there is a $C^\infty$-small cohomologous perturbation $\omega'$ of $\omega_N$ and a current $c' \in \mathcal{C}_{\mathbb{R}}(\omega')$ such that 
\begin{equation}\label{eq:nearpert}
    \Big| \int_{c'} \beta_i - \int_M \beta_i \wedge \omega'\Big|<\varepsilon, \text{ for } i=1,...,k.  
\end{equation}
After a further arbitrarily small cohomologous perturbation, we may in addition assume that the closed characteristics of $c'$ are non-degenerate, see e.g. \cite[Lemma 19]{Rob}. 

Finally, an application of \cite[Lemma 49]{C} shows that there is an embedding $e: N\rightarrow W$ which is $C^\infty$ close to the inclusion of $N$ such that $e^*\Omega = (\pi|_N)^*\omega'$. The hypersurface $N' := e(N)$ is a $C^\infty$ approximation of $N$ which belongs to $\mathcal{U}_{k,\varepsilon}$.

To justify this claim, note that membership in $\mathcal U_{k,\varepsilon}$ requires \eqref{eq:nearpert} for the current and Hamiltonian structure obtained by transporting $c'$ and $\omega'$ to $N'$ via $e\circ(\pi|_N)^{-1}$ and then projecting back to $M$ via $\pi|_{N'}$. The resulting Hamiltonian structure is $C^\infty$-close to $\omega'$, and the resulting current is obtained from $c'$ by $C^\infty$-small deformations of its constituent closed characteristics, with their weights unchanged. To control the resulting change in \eqref{eq:nearpert}, we include a fixed framing among the test forms when applying Theorem~\ref{thm:nearEqRobSplit}, obtaining a uniform mass bound on $c'$, as in the proofs of Propositions~\ref{prop:equidistribution_mt} and~\ref{prop:eqCompSupp}. This bound controls the change in the current integrals, while the volume integrals vary continuously. By initially applying Theorem~\ref{thm:nearEqRobSplit} with a smaller tolerance, we therefore ensure that \eqref{eq:nearpert} still holds after these perturbations of the current and Hamiltonian structure. This concludes the proof of density.

Let $\varepsilon_k$ be a sequence of numbers such that $\varepsilon_k \rightarrow 0$. Then the set
$$\mathcal{V}= \bigcap_{k} \mathcal{U}_{k,\varepsilon_k}$$
is residual in $\mathcal{U}$. One easily checks that every hypersurface in $\mathcal{V}$ admits equidistributed closed characteristics.

\begin{Remark}\label{rem:rational}
For the proofs of Theorem \ref{thm:nearEqRobSplit} and hence of Theorem \ref{thm:KAMequi}, the rationality assumption on the Hamiltonian structure $\omega$ can be slightly weakened. It is enough that $\omega$ restricts to a rational Hamiltonian structure on each connected component of $M\setminus\bigcup_{T\in\mathcal T}T$ individually. Note that components of contact type are exact and therefore automatically rational. Hence it suffices to assume rationality for all components which are not of contact type and therefore admit a global cross section. This will be useful in the proof of Theorem \ref{thm:main2}.
\end{Remark}

\subsection{Some remarks}

We finish this section with some remarks about KAM splittings.
\medskip

\textbf{On the neighboring $C^\infty$-closing property.} First, let us comment that in order to deduce the neighboring $C^\infty$-closing property for a hypersurface $M$, it is enough to assume the following property, which is a priori weaker than the existence of a twist KAM splitting:

There should exist a $C^\infty$-neighborhood $\mathcal{V}$ of $M$ in $\mathcal{HS}(W)$, and a dense subset $\mathcal{U}\subset \mathcal{V}$ satisfying the following. Choose any $N \in \mathcal{U}$ with induced Hamiltonian structure $\omega_N$. Any point $p\in N$ belongs to the interior of some connected domain $A \subset N$ whose boundary is a finite collection of tori tangent to the characteristic foliation of $N$, and such that $(A, \omega_N)$ is either of contact type or $\omega_N|_A$ is rational and its characteristic foliation admits a global cross section.
\medskip

\textbf{Hypersurfaces without KAM splittings.} It is not clear whether admitting a KAM splitting is a common (say, dense or generic) property among hypersurfaces in $\mathbb{R}^4$ or other symplectic manifolds. It is certainly not a $C^k$-dense property with $k\geq 2$ in full generality, as one can find very concrete examples of closed hypersurfaces in (open) symplectic manifolds that $C^k$ robustly have no KAM splittings. This follows from the discussion in \cite[Section 3.1]{CG1}, where it is explained that there exist certain smooth volume-preserving Anosov flows, even on rational homology spheres, that are not Reeb-like or orbit equivalent to suspensions of Anosov diffeomorphisms. Such Anosov flows are of a particular kind known as ``non $\mathbb{R}$-covered". Recall that Anosov flows admit no linear invariant tori. Hence those examples do not admit a KAM splitting. This property persists under $C^1$-perturbations of the vector field, due to the structural stability of Anosov flows. The symplectization of a framed Hamiltonian structure whose Reeb field is parallel to this Anosov vector field yields an example of an open symplectic manifold and a closed hypersurface for which no hypersurface in a $C^2$-neighborhood admits a KAM splitting. Note, however, that Anosov flows admit an equidistributed sequence of periodic orbits anyway.

One can also construct examples of hypersurfaces with no KAM splitting by a local deformation of any given hypersurface, inserting a Reeb cylinder in its characteristic foliation using the symplectic plug construction in \cite[Section 2]{GG}; see also \cite{ReTh}. Such a cylinder cannot intersect a KAM torus and obstructs both contact type and the existence of a global cross section by the Stokes argument in \cite[Remark 2.6]{GG}. However, admitting such a cylinder is not a robust property, even in the $C^\infty$-topology.

\section{Equidistributed orbits near stable hypersurfaces}\label{sec:equi_stable}
In this section, we will use Theorem \ref{thm:KAMequi} to prove Theorems \ref{thm:main} and \ref{thm:main2}.

\subsection{KAM splittings of stable hypersurfaces}

We give a sufficient condition for a stable hypersurface to admit a twist KAM splitting. We will show in Subsection~\ref{ss:appstable} below that, for any stable hypersurface, this condition is satisfied after a $C^\infty$-small perturbation. Our sufficient condition is stated in terms of an improved structural decomposition with an additional non-constantness assumption on the slope of the integrable regions.

\begin{theorem}\label{thm:StRobust}
Let $M$ be a closed three-manifold with a stable Hamiltonian structure $(\lambda,\omega)$. Assume that it admits an improved structural decomposition $(N_0, N_c, N_{int})$ for which the kernel of $\omega$ has a non-constant slope on each connected component of $N_{int}$. Then $(M,\omega)$ admits a twist KAM splitting.
\end{theorem}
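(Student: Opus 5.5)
The plan is to place one twist KAM torus inside the ``transition'' part $T^2\times(a,b)$ of each integrable region, and then check that the complementary domains are contact or fibered. Throughout, fix a component $U\simeq T^2\times I$ of $N_{int}$, with coordinates $(x,y,t)$ in which $\omega=h_1(t)dt\wedge dx+h_2(t)dt\wedge dy$. On $U$ the Reeb field is tangent to each torus $T^2\times\{t\}$, and by Lemma~\ref{lem:integrable_regions} it is orbit equivalent to the constant field of slope $k_U(t)$.

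First I would choose the torus in $U$. By hypothesis $k_U$ is non-constant on $U$. I would first argue that it is in fact non-constant on $(a,b)$. On $U\cap N_0$ we have $f=0$, and on $U\cap N_c$ the function $f$ is a nonzero constant. If one computes $d\lambda=f\omega$ in the integrable coordinates, one gets $g_1'=-fh_1$ and $g_2'=-fh_2$, together with $g_1h_2-g_2h_1\neq0$ from $\lambda\wedge\omega>0$. From this I would try to show that on any interval where $f$ is constant the slope is rigid. For $f=0$ this is clear: $d\lambda=0$ and $\lambda(R)=1$ do not immediately fix $k$, so if this fails I would instead use that $k_U$ is non-constant on all of $U$ and pick $t_0$ anywhere in $U$. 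This is the delicate point, since we need the torus to separate a contact piece from a fibered piece.

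The cleaner route is to pick $t_0\in(0,1)$ with $k_U'(t_0)\neq0$ and $k_U(t_0)$ Diophantine. This is possible because $k_U$ is non-constant and smooth, so it is a local diffeomorphism onto an arc near some $t_0$, and Diophantine directions have full measure. The torus $T_U=T^2\times\{t_0\}$ is then linear, because the flow is a reparametrised constant flow whose rotation direction is the Diophantine vector $k_U(t_0)$; hence it is KAM. It is also twist: in the coordinate $r=t-t_0$ the form is already in the normal form of Proposition~\ref{prop:Birkhoff_normal_form}, with $a=h_2$ and $b=-h_1$ up to truncation, and twist is exactly $k_U'(t_0)\neq0$.

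Next I would verify the splitting condition for $\mathcal T=\{T_U\}$. Each component of the complement is a union of pieces of $N_0$, $N_c$ and half-integrable regions $T^2\times[0,t_0]$ or $T^2\times[t_0,1]$. I would enlarge $N_0$ and $N_c$ by absorbing these halves. To do so I would first move $t_0$ so that it lies in $(a,b)$, and use condition~4: $f$ is monotone there, so in particular the neighbours of $(a,b)$ in $U$ are one contact side and one fibered side, or two pieces of the same kind. On the side where $f\neq0$, $\lambda$ is contact up to $t_0$, provided $f(t_0)\neq0$, which can be arranged since $f$ has no critical points. On the side where $f$ changes sign through $0$, I would place one torus on each side of the zero level of $f$; these are two twist KAM tori chosen as above. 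The strip between them, where $f$ is near $0$, carries the closed form $\beta=dt$-like transversal, or a $dx,dy$ combination, and I would show that $\beta\wedge\omega\ne0$. Thus the strip admits a global cross section and glues to $N_0$. Gluing the closed $1$-form from $N_0$ across the strip needs cohomological compatibility; I would handle this by taking the form $c_1dx+c_2dy$ with $(c_1,c_2)$ transverse to the slope, which is closed and pairs nondegenerately with $\omega$. The main obstacle is exactly this matching: extending the fibered form of $N_0$ to the absorbed strips and ensuring that a contact form exists up to the splitting tori. If the matching fails, I would instead use Theorem~\ref{thm:KAMstable} to add extra tori, making each strip its own component with a global cross section, as in Remark~\ref{rem:KAMsplitting_endcompactification}.
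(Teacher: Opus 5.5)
Your check that $T^2\times\{t_0\}$ is a twist KAM torus whenever $k_U(t_0)$ is Diophantine and $k_U'(t_0)\neq0$ is fine. The construction of the splitting around it, however, has a genuine gap.

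First, the step ``on the side where $f\neq0$, $\lambda$ is contact up to $t_0$'' does not give what Definition~\ref{def:KAMsplit} requires. On $(a,b)$ the function $f$ is non-constant, and you need a contact form $\alpha$ with $d\alpha=\omega$, whereas $d\lambda=f\omega$. After $T^2$-averaging, such an $\alpha$ is $u_1dx+u_2dy+u_3dt$ with $(u_1',u_2')=(h_1,h_2)$ and $u_1h_2-u_2h_1\neq0$. The integration constant of $u$ is fixed, up to closed forms on $C_0$, by the primitive $\lambda/f$ of the adjacent contact piece. The coefficient curve $g$ of $\lambda$ satisfies $g'=fh$, which is parallel to $u'=h$ but with speed rescaled by $f$, so the nondegeneracy of $g$ says nothing about $u$. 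For example, if $f$ triples before the slope starts to turn, $u$ can run along a circle of radius $r$ not enclosing the origin while $g$ runs along one of radius about $3r$ enclosing it. Then $u_1h_2-u_2h_1$ vanishes after the slope has turned by less than $\pi$.

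Second, the fibered alternative has its own obstruction. Note that $dt$ vanishes on $\ker\omega$, which is tangent to the tori. After averaging, a closed $1$-form positive on the characteristic direction over a strip $T^2\times[s_1,s_2]$ must be $c_1dx+c_2dy+c_3\,dt$ with $c_1,c_2$ constant. So it exists if and only if the slope on $[s_1,s_2]$ stays in an open semicircle. You never impose this condition, and it fails as soon as the slope winds by $\pi$ or more between your tori.

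Third, the slope can be constant on all of $(a,b)$, with all its variation happening inside $N_0\cup N_c$. Then there is no twist torus in $(a,b)$ at all. Theorem~\ref{thm:KAMstable} cannot rescue this, since it only produces tori near an already existing twist torus.

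The missing idea is to use several tori per integrable region and to place the outermost ones inside the adjacent contact or fibered regions, not in $(a,b)$.

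\emph{Subdivision.} In each $U$ choose twist KAM tori $T^2\times\{r_1\},\dots,T^2\times\{r_m\}$ such that the slope on every $[r_i,r_{i+1}]$ lies in an open semicircle. This is possible because points with $k_U'\neq0$ and Diophantine slope exist in every interval on which $k_U$ varies. Each such strip then carries a rational closed form $c_1dx+c_2dy$ positive on the characteristic direction, hence a global cross section.

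\emph{Outer tori.} The outermost tori go inside the components $C_0,C_1$ of $N_0\cup N_c$ adjacent to $U$. All remaining pieces then lie either in $N_c$, where $f$ is locally constant and $\lambda/f$ is a contact primitive of $\omega$, or in $N_0$.

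\emph{Enlargement.} If the slope is constant on $U\cap C_0$, first enlarge $C_0$ across the maximal constant-slope interval $[0,v]$ to slightly beyond $v$. In the contact case, use the averaged primitive: writing $u_2=\mu u_1+\nu$ there, its contact condition reduces to $\nu u_1'\neq0$, so it persists while the slope is constant. In the fibered case, extend the averaged closed form as $A\,dx+B\,dy$ and apply Tischler's argument.

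This way no region where $f$ varies and the slope rotates is ever required to be of contact type.
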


\begin{proof}

Consider a component $U \cong T^2 \times [0,1]$ of $N_{int}$. With respect to coordinates $(x,y,r)$ on $T^2\times [0,1]$, the Hamiltonian structure is given by
\begin{equation}\label{eq:wT2}
\omega=k_1(r)dr\wedge dx + k_2(r)dr\wedge dy.
\end{equation}
Recall the slope $k_U$ given by
\begin{equation*}
    k_U = \frac{-k_2 + k_1\sqrt{-1}}{|-k_2+k_1\sqrt{-1}|}.
\end{equation*}
By assumption, this is a non-constant function. An invariant torus in $U$ of the form $T^2\times \{r\}$ is a twist KAM torus precisely if $k_U(r)$ is Diophantine and $k'_U(r)\neq 0$.

Recall that the intersection of $N_0\cup N_c$ and $U$ is the union of $T^2\times [0,a]$ and $T^2\times [b,1]$ for some $0<a<b<1$. Let $C_0$ and $C_1$ be the connected components of $N_0 \cup N_c$ containing $T^2\times \{0\}$ and $T^2\times \{1\}$, respectively.

\begin{lemma}\label{lem:finitetori}
There exist finitely many values $0\leq r_1<r_2<...<r_m\leq 1$ with $m\geq 2$ satisfying the following properties:
\begin{enumerate}
\item the image of the slope function restricted to each interval $[r_{i},r_{i+1}]$ with $i=1,...,m-1$ is contained in the interior of a semicircle of $S^1$;
\item on each torus $T^2\times \{r_i\}$, the slope $k_U(r_i)$ is Diophantine and $k'_U(r_i) \neq 0$, i.e.\ $T^2\times \{r_i\}$ is a twist KAM torus;
\item after possibly enlarging $N_0$ and $N_c$, we can choose $r_1, r_m$ such that $T^2\times \{r_1\} \subset C_0$ and $T^2\times \{r_m\}\subset C_1$.
\end{enumerate}
\end{lemma}
\begin{proof}
The first two statements follow immediately from the fact that the slope function is non-constant and that Diophantine vectors are dense in $S^1$. The only part that needs justification is the third item. Let us focus on $r_1$, since the argument for $r_m$ is identical. Let $B_0=T^2\times [0,a]$ be the intersection between $C_0$ and $U$. If the restriction of the slope function $k_U|_{B_0}$ is non-constant, then it is straightforward that one can choose $r_1\in [0,a]$. Hence, we shall assume that the slope is constant on $[0,a]$, and we will enlarge $C_0$ to a domain $C_0'$ such that the slope is non-constant on $C_0'\cap U$.\\

\textbf{Case 1:} We assume that $C_0\subset N_c$. In this case, by Lemma \ref{lem:lambda_contact_region} and Item 5 in the definition of improved structural decomposition (Definition \ref{def:struc}), the form $\lambda$ is a contact form and $\omega$ is a constant multiple of $d\lambda$. It follows that $\omega$ is of contact type on $C_0$ and hence also on $T^2\times [0,a]$. 

Let $\alpha$ be a contact primitive of $\omega|_{C_0}$ (one can take a constant multiple of $\lambda$). As in \cite[Lemma 3.9]{CV}, let $\hat \alpha$ be the one-form obtained by averaging $\alpha$ under the natural $T^2$-action on $T^2\times [0,a]$. The one-form $\hat \alpha$ is a contact form and it satisfies $d\hat \alpha=\omega$. Moreover, it satisfies $\hat \alpha=\alpha + dG$ for some function $G$. In order to see this, note that $\hat \alpha - \alpha$ is a closed form with vanishing average and that averaging of a closed form yields a cohomologous form.

If $\varphi(r)$ is a cut-off function taking values in $[0,1]$, equal to $0$ near $r=0$ and equal to $1$ near $r=\delta$ for some $\delta \ll 1$, the one-form $\tilde \alpha= \alpha + d(\varphi(r)G)$, extended as $\alpha$ on $C_0\setminus U$, is a primitive of $\omega$ on $C_0$. Since $dr\wedge\omega=0$, the form $\tilde\alpha\wedge\omega$ is a convex combination of $\alpha\wedge\omega$ and $\hat\alpha\wedge\omega$, which have the same nonzero sign. Thus $\tilde\alpha$ is contact on $T^2\times[0,a]$. We claim that the one-form $\tilde \alpha$ extends naturally as a $T^2$-invariant primitive of $\omega$ across $T^2\times [a,1]$. Indeed, $\tilde \alpha$ is given by
\begin{equation}\label{eq:avprim}
\tilde \alpha= h_1(r)dx+ h_2(r)dy+ h_3(r)dr,
\end{equation}
on $T^2\times [\delta,a]$, where 
\begin{equation}\label{eq:der}
(h_1'(r),h_2'(r))=(k_1(r),k_2(r)).
\end{equation}
Extend $h_3$ arbitrarily for $r\in [a,1]$, and extend the functions $h_1,h_2$ for $r\in [a,1]$ by integrating Equation \eqref{eq:der}. Then the expression \eqref{eq:avprim} extends to $T^2\times [a,1]$ as a primitive $\tilde \alpha$ of $\omega$ on all of $C_0\cup U$. This primitive is a contact form for $r\in [0,a]$, and the contact condition reads
$$ h_2(r)h_1'(r)-h_1(r)h_2'(r)\neq 0, \enspace \text{for } r\in [\delta,a]. $$
 Define 
$$ v:=\operatorname{max}_{r\in [0,1]} \{ r \mid k_U|_{[0,r]}\equiv k_U(0)\},$$
so that $[0,v]$ is the maximal interval containing $0$ such that $k_U(r)$ is constant. It clearly contains $[0,a]$ by assumption, and $v<1$. We claim that $\tilde \alpha \wedge d\tilde \alpha\neq 0$ on $T^2\times[0,d]$ for some $d\in(v,1)$. Since $\tilde\alpha$ is already a contact form on $T^2\times[0,a]$, it suffices to show that $W(r)=h_2(r)h_1'(r)-h_1(r)h_2'(r)$ does not vanish on $[a,v]$; extension slightly beyond $v$ then follows by continuity. Assume that there is $r^*\in [a,v]$ such that $W(r^*)=0$. The fact that the slope is constant can be written as the condition $h_2'(r)=Ah_1'(r)$ for some constant $A$ (we assume for simplicity that $h_1'(r)\neq 0$, the other case being analogous), and hence $h_2(r)=Ah_1(r)+B$ for some other constant $B$ on $[a,v]$. Thus
\begin{align*}
   W(r)&= h_2(r)h_1'(r)-h_2'(r)h_1(r)\\
    &=(Ah_1(r)+B)h_1'- Ah_1'(r)h_1(r)\\
    &= Bh_1'(r).
\end{align*}
The contact condition at $r=a$ gives $W(a)\neq0$, and hence $B\neq0$. We deduce that $h_1'(r^*)=0$. This is a contradiction, since then $h_2'(r^*)=0$ and $\omega$ would vanish along the torus $r=r^*$. We conclude that $\omega$ admits a contact primitive on $C_0\cup \left(T^2\times [a,d]\right)$ for some $d$ slightly larger than $v$. Redefining the contact region $N_c$ by changing $C_0$ into $C_0'=C_0\cup \left(T^2\times [a,d]\right)$, the slope on $C_0'\cap U$ is no longer constant, and hence we can choose $r_1$ such that $T^2\times \{r_1\}\subset C_0'\cap U$ where $T^2\times \{r_1\}$ has a Diophantine rotation vector.\\

\textbf{Case 2:} We assume that $C_0\subset N_0$. From Equation \eqref{eq:wT2}, it follows that the kernel of $\omega$ is spanned by 
$$X=k_2(r)\partial_x-k_1(r)\partial_y. $$
On $C_0$ choose a closed one-form $\tilde\lambda$ such that $\tilde\lambda\wedge\omega$ is nowhere zero and $\tilde\lambda(X)>0$ on $B_0$. Up to perturbing $\tilde \lambda$ within closed one-forms, we can assume that it defines a rational cohomology class. We can average $\tilde \lambda$ on $T^2\times [0,a]$, obtaining a $T^2$-invariant closed one-form $\lambda'$ that is positive on $X$ and cohomologous to $\tilde \lambda$. By \cite[Lemma 3.1]{CV2}, there is a closed one-form $\hat \lambda$ positive on $X$ that is equal to $\tilde \lambda$ near $T^2\times \{0\}$ and equal to $\lambda'$ for $r>\delta$ for some small $\delta$, keeping fixed the cohomology class. On $T^2\times [\delta,a]$, we can write $\hat \lambda$ as 
$$ \hat \lambda= Adx+Bdy+c(r)dr, $$
for some constants $A,B$ and a function $c$. To simplify, we can further assume that near $r=a$ the function $c$ vanishes, simply by considering $Adx+Bdy+\rho(r)c(r)dr$ for a cut-off function $\rho$ equal to $0$ near $r=a$. 
Let $v$ be as before the maximal value in  $[0,1]$ such that the slope of $\ker \omega$ is constant in $[0,v]$. Extend $\hat \lambda$ beyond $r=a$ to $T^2\times[a,1]$ by $A\,dx+B\,dy$. By construction,
\begin{equation}\label{eq:N0pos}
\hat \lambda(X)=k_2(r)A-k_1(r)B>0
\end{equation}
for $r$ near $a$. On $[0,v]$, the vector $X(r)$ is a positive scalar multiple of $X(a)$ because its oriented slope is constant. Thus the extension $A\,dx+B\,dy$ is positive on $X$ for $r\in[a,v]$, and by continuity for $r\in[a,d]$ with some $d\in(v,1)$. Since the original patched form is positive on $X$ on $T^2\times[0,a]$, the extended form $\hat\lambda$ is positive on $X$ on $T^2\times J$, where $J=[0,d]$. The slope is non-constant on $J$ by the definition of $v$. Extending the patched form by $\tilde\lambda$ on $C_0\setminus U$ yields a closed one-form on $C_0\cup\left(T^2\times J\right)$ that is nowhere zero on the characteristic direction. Arguing as in Tischler's theorem, since $[\hat \lambda]$ is a multiple of an integer cohomology class, the form $\hat \lambda$ is a multiple of the pullback of the standard form on $S^1$ by a fiber bundle $\pi:C_0\cup \left(T^2\times J\right)\rightarrow S^1$. Any fiber of $\pi$ is a cross section of $\ker\omega$ there, since $\hat\lambda$ is nonzero on the characteristic direction. We can thus redefine $N_0$ by replacing $C_0$ by $C_0'=C_0\cup \left(T^2\times J\right)$, and choose $r_1$ such that $T^2\times \{r_1\}\subset C_0'$.
\end{proof}

We apply Lemma \ref{lem:finitetori} to each connected component of the integrable region of $M$. This yields a finite family of twist KAM tori $T_1,...,T_s$. Let $V$ be the closure of a connected component of $M\setminus \bigsqcup_{i=1}^sT_i$. Then $V$ satisfies one of the following properties:
\begin{itemize}
\item[-] it is contained in $N_c$,
\item[-] it is contained in $N_0$,
\item[-] it is diffeomorphic to $T^2\times I$, the flow is linear on each torus fiber, and the image of the slope is contained in the interior of a semicircle.
\end{itemize}
In the first case, the domain $V$ is of contact type. In the second case, it is a surface bundle over the circle and any fiber is a cross section. In the last case, since the image of the slope function is contained in the interior of a semicircle, there exists a closed one-form $\eta_V$ on $V$ that evaluates positively on the characteristic foliation. Up to perturbation, it can be assumed to have a rational cohomology class, and hence it defines a surface bundle over the circle whose fibers are transverse to the characteristic foliation of $V$. Hence, the closure of any connected component of $M\setminus \bigsqcup_{i=1}^s T_i$ satisfies the required properties for $\mathcal{T}=\{T_1,...,T_s\}$ to define a twist KAM splitting of $(M,\omega)$.
\end{proof} 

\subsection{Density of stable hypersurfaces with the neighboring equidistribution property} \label{ss:appstable}

We proceed to apply the previous results to establish the main theorems of this work.
A straightforward consequence of Theorem \ref{thm:KAMequi} and Theorem \ref{thm:StRobust} is the following.
\begin{corollary}\label{cor:eqSt}
Let $M\subset (W,\Omega)$ be a closed stable hypersurface in a symplectic $4$-manifold such that the induced Hamiltonian structure $\omega$ on $M$ is rational. If there is a stabilizing one-form $\lambda$ of $\omega$ such that $(\lambda,\omega)$ admits an improved structural decomposition $(N_0, N_c, N_{int})$ for which the slope function is non-constant on each connected component of $N_{int}$, then $M$ satisfies the neighboring equidistribution property.
\end{corollary}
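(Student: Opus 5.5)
The plan is to chain Theorem~\ref{thm:StRobust} and Theorem~\ref{thm:KAMequi}, checking that each hypothesis holds.

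First, I would apply Theorem~\ref{thm:StRobust} to the closed three-manifold $M$ with the stable Hamiltonian structure $(\lambda,\omega)$. Here $\omega=\Omega|_M$ is the induced Hamiltonian structure, and $\lambda$ is the stabilizing one-form supplied by the hypothesis. By assumption, $(\lambda,\omega)$ admits an improved structural decomposition $(N_0,N_c,N_{int})$, and the slope of $\ker\omega$ is non-constant on each component of $N_{int}$. These are exactly the hypotheses of Theorem~\ref{thm:StRobust}, so $(M,\omega)$ admits a twist KAM splitting $\mathcal{T}=\{T_1,\dots,T_s\}$.

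One remark on this step: the twist KAM splitting is a property of $\omega$ alone. The stabilizing form $\lambda$ is only used to construct the splitting and plays no role afterwards. This matters because Definition~\ref{def:localclosing} refers only to the hypersurface and its induced Hamiltonian structure.

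Second, I would apply Theorem~\ref{thm:KAMequi} to the embedded hypersurface $M\subset(W,\Omega)$. Its induced Hamiltonian structure $\omega$ is rational by hypothesis and admits the twist KAM splitting from the first step. Theorem~\ref{thm:KAMequi} then gives the neighboring equidistribution property: there is a neighborhood $\mathcal{V}$ of $M$ in $\mathcal{HS}(W)$ and a residual subset $\mathcal{A}\subset\mathcal{V}$ whose members all have equidistributed closed characteristics. This is the conclusion of the corollary.

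The only point needing care is to confirm that the setting of Theorem~\ref{thm:StRobust}, a closed manifold with a stable Hamiltonian structure, matches the setting here. It does, since $M$ is a closed stable hypersurface and $\lambda$ stabilizes $\omega$. Beyond that, no real obstacle is expected.
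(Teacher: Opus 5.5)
Your proposal is correct and matches the paper, which records this corollary as a direct consequence of Theorem~\ref{thm:StRobust} (giving a twist KAM splitting of $(M,\omega)$) followed by Theorem~\ref{thm:KAMequi} (which uses that splitting together with rationality). Your observation that the splitting depends only on $\omega$, with $\lambda$ used solely to construct it, is the right reason the chain works.
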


To deduce Theorem \ref{thm:main}, we need to show that an arbitrary rational stable hypersurface $M$ can be perturbed into a stable hypersurface satisfying the hypotheses of Corollary \ref{cor:eqSt}. Even though it is not needed for our results, it is worth mentioning that the perturbation can be realized by a ``stable isotopy" \cite[Section 6.6]{CV}. This means that the perturbation is given by an isotopy of embedded submanifolds $M_t$ with $t\in [0,1]$ such that there exists a smooth family of one-forms $\lambda_t\in \Omega^1(M_t)$ stabilizing the induced family of Hamiltonian structures on $M_t$.

\begin{lemma}\label{lem:slope}
Let $M\subset (W,\Omega)$ be a stable hypersurface. Then there exists a stable hypersurface $M'$ which is $C^\infty$ close to $M$ and isotopic to $M$ via a stable isotopy such that the induced Hamiltonian structure $\omega'$ on $M'$ admits a stabilizing $1$-form $\lambda'$ with the property that $(\lambda', \omega')$ admits an improved structural decomposition for which the slope function is non-constant on each component of the integrable region.
\end{lemma}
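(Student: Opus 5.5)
We start from Theorem~\ref{thm:struc}: after a $C^1$-small change of stabilizing form (which does not move $M$ at all), we may assume that $(\lambda,\omega)$ admits an improved structural decomposition $(N_0,N_c,N_{int})$. On a component $U\cong T^2\times[0,1]$ of $N_{int}$ where the slope $k_U$ is non-constant there is nothing to do. The task is to modify $\omega$ on the components where $k_U$ is constant. The modification will be an exact perturbation supported in the interior of $U$, away from $N_0\cup N_c$, so the decomposition outside $U$ is untouched.

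On such a component, write $\omega=k_1(r)dr\wedge dx+k_2(r)dr\wedge dy$ with $(k_1,k_2)$ a positive multiple of a fixed vector, and $\lambda=g_1dx+g_2dy+g_3dr$. We perturb $\omega$ by $d\big(\epsilon\rho(r)(u\,dx+v\,dy)\big)$, where $\rho$ is a bump function supported in a small interval inside $(a,b)$ (the region where $f$ has no critical points). This keeps the $T^2$-invariant integrable form. The new $(k_1,k_2)$ becomes $(k_1+\epsilon(\rho u)',k_2+\epsilon(\rho v)')$, and choosing $(u,v)$ not parallel to $(k_1,k_2)$ makes the slope non-constant. The perturbation is exact and $C^\infty$-small.

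We then need a new stabilizing form. On $U$, a $T^2$-invariant form $\lambda'=g_1'dx+g_2'dy$ is stabilizing for the perturbed $\omega'$ if two conditions hold. First, it must be positive on the kernel $X'=k_2'\partial_x-k_1'\partial_y$. Second, $d\lambda'$ must be a multiple of $\omega'$, i.e.\ $(g_1',g_2')'$ must be parallel to $(k_1',k_2')$. Concretely, we set $(g_1',g_2')(r)=(g_1,g_2)(r_0)+\int_{r_0}^r f'(s)(k_1',k_2')(s)\,ds$, where $f'$ is a $C^\infty$-small perturbation of $f$ that still has no critical points and agrees with $f$ outside the perturbation interval. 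Since $\lambda\wedge\omega>0$ is an open condition and everything is close to the original data, positivity persists. Condition~4 of Definition~\ref{def:struc} holds because the new $f'$ is non-critical and constant on tori. The data outside the support are unchanged, so $(N_0,N_c,N_{int})$ is still an improved structural decomposition with non-constant slope.

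The main obstacle is the matching at the ends of the support of $\rho$: integrating $f'(k_1',k_2')$ must return $(g_1',g_2')$ exactly to $(g_1,g_2)$. This gives a two-dimensional constraint, $\int f'(k'-k)\,dr+\int(f'-f)k\,dr=0$. We solve it using the freedom in $f'$ and $\rho$. Specifically, we use two bumps with opposite sign of $\epsilon$, or adjust $f'$ along a one-parameter family, together with an implicit-function or degree argument for small $\epsilon$. Finally, the exact perturbation of $\omega$ is realized by a hypersurface $M'$ via \cite[Lemma 49]{C}. Applying the construction along the family $t\epsilon$ gives a stable isotopy, since the stabilizing forms depend smoothly on $t$.
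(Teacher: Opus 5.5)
Your overall route is the paper's. You apply Theorem~\ref{thm:struc}, then make a $T^2$-invariant exact perturbation of $(\lambda,\omega)$ inside each integrable component, run it along a one-parameter family, and finish with \cite[Lemma 49]{C}. The paper quotes \cite[Lemma 6.2]{CR} for the local step and adds a normalization so that the primitive is compactly supported. You build the local perturbation by hand, and your primitive $\epsilon\rho(u\,dx+v\,dy)$ is compactly supported by construction.

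The one step you leave open is the matching, and one of your two proposed fixes cannot work. Below, primes denote $d/dr$ and tildes denote the new data. Write $k=(k_1,k_2)=c(r)(p,q)$ with $c>0$ and $\tilde k=k+\epsilon\rho'(u,v)$. The matching constraint then reads
\[
\Big(\int(\tilde f-f)\,c\,dr\Big)(p,q)+\epsilon\Big(\int \tilde f\rho'\,dr\Big)(u,v)=0 .
\]
Since $(p,q)$ and $(u,v)$ are independent, you need $\int\tilde f\rho'\,dr=-\int\tilde f'\rho\,dr=0$. Condition 4 of Definition~\ref{def:struc} forces $\tilde f'$ to have a fixed sign on $(a,b)$. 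So for a single bump $\rho\ge 0$ the constraint fails for every admissible $\tilde f$, and no implicit-function or degree argument rescues the ``adjust $f$'' option.

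The two-bump option does work, and exactly, with no implicit function theorem. Keep $\tilde f=f$ and take $\rho=\rho_1-s\rho_2$ with $s=\int f'\rho_1\,dr\big/\int f'\rho_2\,dr>0$. Then both coefficients vanish, and
\[
\lambda'=\lambda+\epsilon\,\Phi(r)\,(u\,dx+v\,dy),\qquad \Phi(r)=\int_a^r f(s)\rho'(s)\,ds,
\]
is a compactly supported modification with $d\lambda'=f\,\omega'$ identically. Since $f$ is unchanged, conditions 4--6 are untouched. The slope becomes non-constant because $\rho'\not\equiv 0$. Everything is linear in $\epsilon$, so replacing $\epsilon$ by $t\epsilon$ directly gives the path of cohomologous stable Hamiltonian structures needed for the stable isotopy.

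A single bump also suffices if you place it in $T^2\times(0,a)$ or $T^2\times(b,1)$ instead of in $(a,b)$. On each of these connected pieces of $U\cap(N_0\cup N_c)$, $f$ is a constant $f_0$ by conditions 5 and 6. Then $\lambda'=\lambda+\epsilon f_0\rho\,(u\,dx+v\,dy)$ satisfies $d\lambda'=f_0\,\omega'$, and no matching condition arises at all.
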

\begin{proof}
By Theorem~\ref{thm:struc} the stabilizable Hamiltonian structure $\omega=\Omega|_M$ admits a stabilizing $1$-form $\lambda$ for which there exists an improved structural decomposition $(N_0,N_c,N_{int})$.

Given any connected component $U\cong T^2\times [0,1]$ of the integrable region, by \cite[Lemma 6.2]{CR} there exist arbitrarily $C^\infty$-small one-forms $\gamma, \eta$ compactly supported in $U$ such that $(\lambda+\gamma, \omega+d\eta)$ is a stable Hamiltonian structure integrable on $U$ and whose slope is non-constant there. Strictly speaking, \cite[Lemma 6.2]{CR} asserts compact support of the perturbation of $\omega$, but not of a primitive. To ensure the latter, we additionally require $\int_0^1(\widetilde h_2-h_2)\,dt=0$ in the construction of the cited proof, using its notation.

An inspection of the proof of \cite[Lemma 6.2]{CR} shows that this works parametrically, i.e.\ there exists a $C^\infty$-small family 
 $$(\gamma_t,\eta_t) \qquad \text{with} \qquad t\in [0,1],$$ 
 satisfying $\gamma_0=\eta_0\equiv 0$ and $\gamma_1=\gamma$, $\eta_1=\eta$ and such that $(\lambda+\gamma_t,\omega+d\eta_t)$ is a path of cohomologous stable Hamiltonian structures. Then \cite[Lemma 49]{C} and its proof, which is an application of Moser's path method, show that there exists a $C^\infty$-small family of embeddings $\varphi_t: M \longrightarrow W$ such that $\varphi_t^*\Omega=\omega+d\eta_t$.
 
 Apply this to each connected component of $N_{int}$ and observe that the resulting stable Hamiltonian structure admits the same improved structural decomposition, now with non-constant slope functions.
\end{proof}

\subsection{Equidistribution for geodesible conservative flows}
\label{ss:app3D}

In this section we prove Theorem~\ref{thm:main2}, which is an application to 3D conservative dynamics. Let $M$ be a closed three-dimensional manifold endowed with a volume form $\mu$. Recall that $\mathfrak{X}_{\mu}(M)$ denotes the set of nowhere-vanishing vector fields that preserve $\mu$ and that $\mathcal{SR}_\mu(M)$ denotes the subset of vector fields parallel to the Reeb field of a stable Hamiltonian structure on $M$. Equivalently, $\mathcal{SR}_\mu(M)$ can be characterized as the subset of vector fields $X \in \mathfrak{X}_\mu(M)$ such that $\iota_X\mu$ is stabilizable, or as the subset of vector fields that are geodesible, see \cite{Re}.

Theorem \ref{thm:main2} states that a vector field in $\mathcal{SR}_\mu(M)$ can be $C^\infty$ perturbed to a vector field in $\mathfrak{X}_\mu(M)$ that admits an equidistributed sequence of periodic orbits. This does not immediately follow from Theorem \ref{thm:main}. Indeed, the stabilizable Hamiltonian structure $\iota_X\mu$ associated to a vector field $X\in \mathcal{SR}_\mu(M)$ might not be rational. A priori, it is not clear whether $\iota_X\mu$ can be perturbed to a rational Hamiltonian structure while maintaining the property that it is stabilizable. We proceed differently, showing that rationality can be achieved exactly where it is needed in our proof, see Remark \ref{rem:rational}.

\begin{proof}[Proof of Theorem \ref{thm:main2}]
Let $X$ be a vector field in $\mathcal{SR}_\mu(M)$. Let $\omega := \iota_X\mu$ be the induced stabilizable Hamiltonian structure. By Theorem~\ref{thm:struc} we may choose a stabilizing $1$-form $\lambda$ for $\omega$ such that $(\lambda,\omega)$ admits an improved structural decomposition.

Consider the symplectization $M\times (-\varepsilon,\varepsilon)$ equipped with the symplectic form $\Omega := \omega + d(t\lambda)$, where $t$ is the coordinate of the second factor. The Hamiltonian structure induced on the hypersurface $M \cong M\times\{0\}$ is precisely given by $\omega$. It follows from Lemma~\ref{lem:slope} that there exists a $C^\infty$ perturbation $\tilde M$ of $M$ such that the induced Hamiltonian structure $\tilde \omega$ on $\tilde M$ admits a stabilizing $1$-form $\tilde\lambda$ such that $(\tilde\lambda,\tilde\omega)$ admits an improved structural decomposition satisfying the hypotheses of Theorem~\ref{thm:StRobust}. There are two cases.

\medskip
\noindent \textbf{Case 1.} If $\omega$ (and hence $\tilde\omega$) is rational, then Theorems~\ref{thm:StRobust} and~\ref{thm:KAMequi} imply that we can further $C^\infty$ perturb $\tilde M$ to a hypersurface $\hat M$ whose induced Hamiltonian structure $\hat \omega$ admits an equidistributed sequence of closed characteristics. The restriction of the natural projection $\pi: M\times (-\varepsilon,\varepsilon) \rightarrow M$ to $\hat{M}$ is a diffeomorphism $\pi|_{\hat M}:\hat M\to M$. We push forward $\hat\omega$ via this diffeomorphism and define $Y$ to be the vector field on $M$ characterized by $\iota_Y\mu = (\pi|_{\hat M})_*\hat\omega$. This yields the desired volume-preserving vector field $C^\infty$ close to $X$ with an equidistributed sequence of closed characteristics.

\medskip
\noindent \textbf{Case 2.} If $\omega$ (and hence $\tilde \omega$) is irrational, we proceed as follows. Via the diffeomorphism $\pi|_{\tilde M} : \tilde M \to M$, we may regard $\tilde \omega$ as a stabilizable Hamiltonian structure on $M$ which is $C^\infty$ close to $\omega$. By Theorem~\ref{thm:StRobust} it admits a KAM splitting. Let $N_1,\dots, N_r$ be the closures of the connected components of $M\setminus \left( T_1\cup \dots\cup T_k\right)$, where $T_1,\dots,T_k$ are the tori of the KAM splitting. Let $N$ be one of the domains $N_1,\dots,N_r$ on which $\tilde \omega$ admits a global cross section. This means that there exist a compact symplectic surface with boundary $(\Sigma,\tau)$, a symplectomorphism $\phi : (\Sigma,\tau)\to (\Sigma,\tau)$, and a diffeomorphism $\Psi : (Y_\phi,\tau_\phi) \to (N,\tilde\omega)$.

Using \cite[Proposition 3.3]{PP} exactly as in the proof of \cite[Theorem 1.5]{PP}, we find a rational symplectomorphism $\hat\phi : (\Sigma,\tau) \to (\Sigma, \tau)$ which is $C^\infty$ close to $\phi$ and agrees with $\phi$ near the boundary of $\Sigma$. We remove the mapping torus $(Y_\phi,\tau_\phi)$ from $(M,\tilde\omega)$ and replace it by $(Y_{\hat\phi},\tau_{\hat\phi})$. We can regard the result as a Hamiltonian structure on $M$ which is $C^\infty$ close to $\tilde \omega$ and agrees with $\tilde\omega$ outside of $N$. We repeat this step for every domain $N_i$ with a global cross section. Let $\hat\omega$ denote the resulting Hamiltonian structure on $M$. It is still $C^\infty$ close to $\tilde \omega$, admits the same KAM splitting $T_1,\dots,T_k$, and has the property that it restricts to a rational Hamiltonian structure on each domain $N_i$ with a global cross section.

Now choose a framing one-form $\eta$ for $\hat\omega$, consider the symplectic form $\Omega= \hat \omega + d(t\eta)$ on $M\times (-\varepsilon, \varepsilon)$ and apply Theorem \ref{thm:KAMequi} to the hypersurface $M\cong M\times \{0\}$, which is possible by Remark \ref{rem:rational}. This gives a hypersurface $M'$ which is $C^\infty$ close to $M$ and has an equidistributed sequence of closed characteristics. Let $\omega'$ be the pushforward to $M$ of the induced Hamiltonian structure on $M'$ via the restriction of the projection $M\times (-\varepsilon,\varepsilon)\to M$. Let $Y$ be the vector field on $M$ characterized by $\iota_Y\mu = \omega'$. It is $C^\infty$ close to $X$ and has an equidistributed sequence of periodic orbits, as desired.
\end{proof}

\section{Twist KAM tori of Hamiltonian structures}\label{sec:KAM_proofs}

In this section, we prove Proposition~\ref{prop:Birkhoff_normal_form}, i.e.\ we construct a Birkhoff normal form for Hamiltonian structures near KAM tori. This normal form was used in the definition of twist KAM tori in Subsection~\ref{subsec:KAM_splitting}. We also prove Theorems~\ref{thm:KAMrobust} and~\ref{thm:KAMstable} on the robustness and stability of twist KAM tori. We will deduce these results from theorems about invariant circles of surface maps, which we recall first.

\subsection{Review of invariant circles}

Let $(\Sigma,\tau)$ be a surface with an area form. Let $U \subset \Sigma$ be an open subset, and let $\varphi : (U,\tau) \hookrightarrow (\Sigma,\tau)$ be an area-preserving embedding. An \emph{invariant circle} of $\varphi$ is an embedded circle $C \subset U$ such that $\varphi(C) = C$. Choose an orientation of $C$. If $\varphi|_C$ preserves orientation, it has a well-defined rotation number $\alpha\in\mathbb{R}/\mathbb{Z}$. We say that $C$ is \emph{Diophantine} if $\varphi|_C$ preserves orientation and its rotation number is Diophantine. If this is the case, then $\varphi|_C$ is smoothly conjugate to a rigid circle rotation of rotation number $\alpha$; see~\cite{H8,Y2}. There is a Birkhoff normal form theorem near Diophantine invariant circles; see \cite{B2}, \cite[\S 1.3]{yoc92}, \cite[Proposition 5]{fk09}.

\begin{theorem}
\label{thm:birkhoff_normal_form_invariant_circle}
    Suppose that $C$ is a Diophantine invariant circle of $\varphi$ of rotation number $\alpha$. Let $n>0$ be a positive integer. Then there exists an identification of a tubular neighborhood of $C$ with $S^1 \times (-\varepsilon,\varepsilon)$ such that, with respect to the coordinates $(\theta,r)$ on $S^1 \times (-\varepsilon,\varepsilon)$, the circle $C$ is given by $\left\{ r = 0\right\}$, the area form $\tau$ is given by $d\theta \wedge dr$, and $\varphi$ is given by
    \begin{equation}
    \label{eq:birkhoff_normal_form_invariant_circle}
        \varphi(\theta,r) = \left(\theta+\alpha +\sum_{i=1}^{n} b_i r^i,\enspace r\right) + O(r^{n+1})
    \end{equation}
    near $C$. The numbers $b_i$ for $i \geq 1$ are uniquely determined. 
\end{theorem}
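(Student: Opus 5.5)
The plan is to prove Theorem~\ref{thm:birkhoff_normal_form_invariant_circle} by the classical inductive elimination of the angle dependence, one order in $r$ at a time. Each step reduces to a cohomological equation over the rotation $\theta\mapsto\theta+\alpha$, which is solvable because $\alpha$ is Diophantine.

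\textbf{Step 1: linearize the circle.} By the Herman--Yoccoz theorem quoted above, $\varphi|_C$ is smoothly conjugate to the rigid rotation $R_\alpha$. Extend this conjugacy to a tubular neighborhood and apply a Moser/Weinstein-type argument, normalizing the area form along the zero section. This gives symplectic coordinates $(\theta,r)$ near $C$ with $\tau=d\theta\wedge dr$, $C=\{r=0\}$ and $\varphi(\theta,0)=(\theta+\alpha,0)$.

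\textbf{Step 2: fix the first order.} The differential of $\varphi$ along $C$ is area-preserving and has the form $\begin{pmatrix}1 & *\\ 0 & c(\theta)\end{pmatrix}$. Hence $\varphi(\theta,r)=(\theta+\alpha+O(r),\,c(\theta)r+O(r^2))$, and area preservation forces $c\equiv 1$.

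\textbf{Step 3: inductive step.} Suppose that
\[
\varphi(\theta,r)=\Big(\theta+\alpha+\sum_{i<k}b_ir^i+r^kA(\theta),\ r+r^{k+1}B(\theta)\Big)+\text{h.o.t.}
\]
We conjugate $\varphi$ by the time-one map of a Hamiltonian of the form $r^{k+1}S(\theta)$, or a generating function of order $k+1$. This changes $A$ and $B$ by coboundary terms, namely multiples of $S(\theta+\alpha)-S(\theta)$ and its derivative. The order-$(k+1)$ part of the map is symplectic, which ties the $\theta$-dependence of $B$ to that of $A$: it makes $B$ a derivative term with zero mean. One then solves
\[
S(\theta+\alpha)-S(\theta)=A(\theta)-\bar A
\]
in Fourier series, $\hat S_m=\hat A_m/(e^{2\pi i m\alpha}-1)$. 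The Diophantine bound $|e^{2\pi i m\alpha}-1|\ge c|m|^{-\tau}$ shows that $S$ is smooth. Setting $b_k:=\bar A$ removes the angle dependence at this order. After $n$ steps we obtain \eqref{eq:birkhoff_normal_form_invariant_circle}, with a composition of finitely many smooth symplectic changes of coordinates. The bookkeeping of which Fourier modes are killed, including the $r$-component, is the main obstacle. The cleanest way to handle it is to work with generating functions $\theta r'+\alpha r'+\sum b_i r'^{i+1}/(i+1)+\dots$, where symplecticity is automatic and only one scalar cohomological equation appears per order.

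\textbf{Step 4: uniqueness of the $b_i$.} For each $s$, the rotation number of $\varphi$ restricted to the invariant curve near $r=s$, or the average of the twist along orbits, has the Taylor expansion $\alpha+\sum b_is^i$ with respect to the symplectic-area coordinate $s$. Here $s$ is the area enclosed between the curve and $C$. These are intrinsic quantities. Alternatively, one checks directly that two normal forms related by a symplectic map fixing $C$ must coincide order by order: the conjugating map has the form $(\theta+g(\theta),\,r)+O(r)$, and comparing the averages at each order forces equal constants.
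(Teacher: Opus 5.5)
Your argument is correct and is the standard Birkhoff normal form procedure. You linearize $\varphi|_C$ by Herman--Yoccoz, take symplectic tubular coordinates, and then remove the angle dependence order by order via cohomological equations over $\theta\mapsto\theta+\alpha$, with area preservation forcing $B=-A'/(k+1)$. The paper gives no proof of its own; it cites Birkhoff, Yoccoz and Fayad--Krikorian, and your argument is the standard one in that literature.

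For uniqueness, drop your first route, since $\varphi$ need not have an invariant curve near $r=s$ for every $s$. Use the direct comparison instead. Here $g$ is constant because $g(\theta+\alpha)=g(\theta)$ with $\alpha$ irrational. Invariance under $\theta\mapsto\theta+\alpha$ together with area preservation then forces the conjugacy to be $(\theta+c+p(r)+r^nP_n(\theta),\,r)+O(r^{n+1})$. Averaging the resulting identity $b_n+P_n(\theta+\alpha)=b_n'+P_n(\theta)$ over $\theta$ gives $b_n=b_n'$, and the lower-order coefficients of the same identity give $b_i=b_i'$ for $i<n$.
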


The numbers $b_i$ in the above theorem are called \emph{Birkhoff invariants}. We say that a Diophantine invariant circle is \emph{twist} if the first Birkhoff invariant $b_1$ does not vanish.

Suppose now that $\Sigma$ is an annulus and that $C$ is an essential circle. We say that an area-preserving embedding $\varphi : (U,\tau)\hookrightarrow (\Sigma,\tau)$ defined in a neighborhood $U$ of $C$ and sending $C$ to a circle $\varphi(C)$ homologous to $C$ is \emph{exact} if $\int_Z \tau$ vanishes for one (and hence any) $2$-chain $Z$ in $\Sigma$ with boundary $\partial Z = \varphi(C) -C$. The following result is a consequence of R\"ussmann's translated curve theorem~\cite{R1}; see~\cite[\S 1.4, Rem. 3]{yoc92}. 

\begin{theorem}
    \label{thm:perturbation_of_twist_diophantine_invariant_circles}
    Suppose that $C$ is a twist Diophantine invariant circle of $\varphi$ of rotation number $\alpha$. Then for every exact $C^\infty$ small perturbation $\tilde{\varphi}$ of $\varphi$ there exists a twist Diophantine invariant circle $\tilde{C}$ of $\tilde{\varphi}$ of the same rotation number $\alpha$. Moreover, the assignment $\tilde{\varphi} \mapsto \tilde{C}$ is smooth and maps $\varphi$ to $C$.
\end{theorem}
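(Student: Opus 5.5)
The plan is to reduce to Rüssmann's translated curve theorem by putting $\varphi$ in the Birkhoff normal form of Theorem~\ref{thm:birkhoff_normal_form_invariant_circle} near $C$, and then to use exactness to kill the translation parameter.

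\textbf{Normal form and rescaling.} First apply Theorem~\ref{thm:birkhoff_normal_form_invariant_circle} with $n=1$ (or larger). In the coordinates $(\theta,r)$ we have $\tau = d\theta\wedge dr$ and
\[
\varphi(\theta,r)=(\theta+\alpha+b_1 r,\ r)+O(r^2), \qquad b_1\neq 0 .
\]
Next restrict to the thin annulus $|r|<\delta$ and rescale $r=\delta s$. The map $\varphi$ then becomes $(\theta,s)\mapsto(\theta+\alpha+\delta b_1 s,\ s)+O(\delta^2)$ in $C^\infty$, where the $O(\delta^2)$ is measured in the rescaled coordinates. A $C^\infty$-small perturbation $\tilde\varphi$ of $\varphi$ adds a further error. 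This error is small relative to $\delta$ provided $\tilde\varphi$ is taken close enough to $\varphi$ once $\delta$ is fixed. The result is a near-integrable twist map of the form required by Rüssmann's translated curve theorem~\cite{R1}. Its unperturbed rotation function is $\alpha+\delta b_1 s$, which is a twist since $b_1\neq 0$, and the frequency $\alpha$ is Diophantine and is attained at $s=0$. The theorem then produces a graph $\tilde C=\{s=u(\theta)\}$ close to $\{s=0\}$ and a constant $c$ such that $\tilde\varphi(\tilde C)=\tilde C+(0,c)$. It also yields that $\tilde\varphi|_{\tilde C}$, composed with the vertical translation, is smoothly conjugate to rotation by $\alpha$.

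\textbf{Exactness forces $c=0$.} Let $Z$ be a $2$-chain with $\partial Z=\tilde C-C$. Then
\[
\int_{\tilde\varphi(\tilde C)-\tilde C}\tau=\int_{C-\tilde C}\tau+\int_{\tilde\varphi(C)-C}\tau+\int_{\tilde\varphi(\tilde C)-\tilde\varphi(C)}\tau ,
\]
where each term means the area of a $2$-chain with the indicated boundary. The middle term vanishes because $\tilde\varphi$ is exact. The first and last terms cancel, because $\tilde\varphi_*Z$ bounds $\tilde\varphi(\tilde C)-\tilde\varphi(C)$ and $\tilde\varphi$ preserves $\tau$. On the other hand, the area between the graphs $\tilde C$ and $\tilde C+(0,c)$ equals $c$, up to the fixed normalisation of $\theta$ and the rescaling. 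Hence $c=0$. So $\tilde C$ is a genuine invariant circle with rotation number $\alpha$, and $\tilde\varphi|_{\tilde C}$ preserves orientation, which gives that $\tilde C$ is Diophantine.

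\textbf{Twist and smoothness.} For the twist property, note that the first Birkhoff invariant of $\tilde C$ is computed from finitely many derivatives of $\tilde\varphi$ along $\tilde C$ via the normal form construction. It therefore depends continuously on $(\tilde\varphi,\tilde C)$, and remains nonzero near $b_1\neq 0$. For the smoothness of $\tilde\varphi\mapsto\tilde C$, I would use that Rüssmann's proof, being a KAM/Newton scheme equivalently phrased as a Nash--Moser implicit function theorem (as in~\cite[\S 1.4]{yoc92}), yields the pair $(u,c)$ depending smoothly on the map. At $\tilde\varphi=\varphi$ the output is $u=0$ and $c=0$, i.e.\ $\tilde C=C$.

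I expect the main obstacle to be this last point together with the precise bookkeeping in the rescaling step. One must check that the hypotheses of the translated curve theorem hold uniformly, and that the smooth-dependence statement, which is usually only implicit in the literature, indeed follows from the implicit-function formulation. The exactness argument itself is routine.
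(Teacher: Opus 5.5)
Your proposal is correct and follows essentially the paper's route: the paper gives no independent argument. It deduces the statement from R\"ussmann's translated curve theorem, with the exactness (intersection) property forcing the translation constant to vanish, citing Yoccoz's Bourbaki expos\'e (\S 1.4, Remark 3).

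One bookkeeping point: with $n=1$, your $O(\delta^2)$ bound on the rescaled radial component holds only because area preservation kills the $r^2$-term of the radial component (the order-$r$ term of $\det D\varphi$ is twice that coefficient). Either state this, or simply take $n\ge 2$.
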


The next result follows from Moser's invariant curve theorem~\cite{Mo1}, see also \cite[Thm. 1]{fk09} for a more general result.

\begin{theorem}
    \label{thm:accumulation_twist_diophantine_invariant_circles}
    Suppose that $C$ is a twist Diophantine invariant circle of $\varphi$. Then $C$ is accumulated on both sides by twist Diophantine invariant circles of $\varphi$.
\end{theorem}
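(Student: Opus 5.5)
The plan is to deduce Theorem~\ref{thm:accumulation_twist_diophantine_invariant_circles} from Moser's invariant curve theorem, applied in the Birkhoff normal form coordinates of Theorem~\ref{thm:birkhoff_normal_form_invariant_circle}.

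First, fix $n\geq 2$ (any $n$ large enough for the smoothness loss in Moser's theorem will do) and pass to the coordinates $(\theta,r)$ on $S^1\times(-\varepsilon,\varepsilon)$ given by Theorem~\ref{thm:birkhoff_normal_form_invariant_circle}. In these coordinates $\tau=d\theta\wedge dr$ and
\[
\varphi(\theta,r)=\bigl(\theta+\alpha+b_1r+O(r^2),\ r\bigr)+O(r^{n+1}),
\]
with $b_1\neq 0$ because $C$ is twist. Next, for small $\delta>0$, restrict to the annulus $A_\delta=S^1\times[\delta,2\delta]$ (and symmetrically $S^1\times[-2\delta,-\delta]$ for the other side). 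Rescale by $r=\delta(1+s)$ with $s\in[0,1]$. The map becomes
\[
(\theta,s)\mapsto\bigl(\theta+\alpha+b_1\delta+b_1\delta s+O(\delta^2),\ s\bigr)+O(\delta^{n})
\]
in $C^k$ norms. Dividing the twist by $\delta$, this is an integrable twist map $(\theta,s)\mapsto(\theta+\beta_\delta+b_1\delta s,s)$ perturbed by an error that is $O(\delta^{n})$ in $C^k$, while the twist is of size $\delta$. Moser's theorem requires the perturbation to be small relative to a power of the twist, roughly $\delta^{n}\ll \delta^{m}$ for a fixed $m$ depending on the Diophantine constants chosen. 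We therefore choose $n$ larger than this $m$.

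The remaining hypotheses of Moser's theorem also hold. The map is area-preserving, and it has the intersection property: the image of any essential curve intersects that curve. This follows from exactness, since $\tau=d\theta\wedge dr$ and the map preserves $\tau$ near $C$ while fixing $C$, so the flux through any essential circle in the neighborhood vanishes. Moser's theorem then gives, for every Diophantine $\omega$ of fixed type $(c,\tau)$ lying in the range $\beta_\delta+b_1\delta[\tfrac14,\tfrac34]$, an invariant circle $C_\omega\subset A_\delta$ with rotation number $\omega$. It is a smooth graph on which $\varphi$ is conjugate to the rotation. Such $\omega$ exist for every small $\delta$, since Diophantine numbers of a fixed type with small $c$ are dense. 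This gives invariant circles in $A_\delta$ for all small $\delta$, hence accumulating on $C$ from both sides, with $C^\infty$ convergence following from the graph estimates.

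It remains to show that these circles are twist. Apply Theorem~\ref{thm:birkhoff_normal_form_invariant_circle} at $C_\omega$. Its first Birkhoff invariant is the derivative of the rotation number across the foliation near $C_\omega$, and this is approximately $b_1\neq 0$, up to an error of size $O(\delta)$ coming from the near-integrable form. To make this rigorous, I would use the fact that Moser's construction yields $C_\omega$ as a leaf of a Whitney-smooth family of invariant circles parametrized by a positive-measure set of rotation numbers. Differentiating the conjugacy in the Whitney sense then shows that $b_1(C_\omega)$ is close to $b_1$. Alternatively, I would cite \cite[Thm.~1]{fk09}, which directly gives accumulation by twist Diophantine circles.

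I expect this twist verification, together with the bookkeeping of losses of derivatives in the scaling, to be the main obstacle. The existence part is standard once the normal form order $n$ is chosen large enough.
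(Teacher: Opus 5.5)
Your proposal is correct and follows the paper. The paper gives no argument beyond deducing the statement from Moser's invariant curve theorem \cite{Mo1} near $C$, which your Birkhoff-normal-form rescaling carries out in the standard way, and citing \cite[Thm.~1]{fk09} for a more general result, which is also your fallback for the twist property.

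Two small remarks. First, for fixed $c$ the $(c,\tau)$-Diophantine numbers are nowhere dense, not dense. What you actually use is that each interval $\beta_\delta+b_1\delta[\tfrac14,\tfrac34]$ contains Diophantine numbers whose constant $c$ is a small multiple of $\delta$, and this is why $n$ must be large. Second, the twist of $C_\omega$ can be checked more directly than via Whitney smoothness. In area-normalized coordinates in which $\varphi|_{C_\omega}$ is a rotation, $b_1(C_\omega)$ is the mean of the shear coefficient of $D\varphi$ along $C_\omega$. This mean is close to $b_1$ because Moser's curve and conjugacy are $C^k$-close to the integrable ones.
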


\subsection{Birkhoff normal form and the twist condition}\label{ss:Birkhoff}

In this subsection, we prove Proposition~\ref{prop:Birkhoff_normal_form}. Let $(M, \omega)$ be a Hamiltonian $3$-manifold, and let $T \subset M$ be an invariant KAM torus. The goal is to construct a Birkhoff normal form for $\omega$ near $T$.

Since the characteristic foliation on $T$ is diffeomorphic to a linear foliation, it admits a global cross section $C\subset T$ which is an embedded circle. Choose a thin embedded annulus $\Sigma \subset M$ which intersects the torus $T$ transversely in the circle $C$ and is moreover transverse to the characteristic foliation on $M$. The annulus $\Sigma$ serves as a local Poincar\'e section for the characteristic foliation on $M$, with a local return map $\varphi : U \hookrightarrow \Sigma$ defined on some open neighborhood $U\subset \Sigma$ of the circle $C$. The Hamiltonian structure $\omega$ restricts to an area form $\tau$ on $\Sigma$, and the local return map $\varphi$ is area-preserving with respect to $\tau$.

The circle $C$ is an invariant circle of $\varphi$, and since $T$ is a KAM torus, its rotation number $\alpha$ is Diophantine. We may therefore apply Theorem~\ref{thm:birkhoff_normal_form_invariant_circle}. For a given $n\geq 1$, this yields local coordinates $(\theta,r) \in \mathbb{T}\times (-\varepsilon,\varepsilon)$ on $\Sigma$ in a neighborhood of $C$ such that $C \simeq \mathbb{T}\times \{0\}$, the area form is given by $\tau = d\theta \wedge dr$, and $\varphi$ satisfies identity~\eqref{eq:birkhoff_normal_form_invariant_circle}. Let us write this identity as
\begin{equation*}
    \varphi(\theta,r) = (\theta + f(r),r) + O(r^{n+1}),
\end{equation*}
where $f(r) = \alpha + \sum_{i=1}^n b_i r^i$. Let $\varphi_0$ be the area-preserving diffeomorphism given by
\begin{equation*}
    \varphi_0(\theta,r) = (\theta +f(r),r).
\end{equation*}
Let $F(r)$ be the primitive of $f(r)$ which vanishes at $r = 0$. Viewing $F$ as a Hamiltonian on the annulus, we have $\varphi_0 = \psi_{F}^1$.

\begin{lemma}
There exists a $1$-periodic Hamiltonian $H$ of the form
\begin{equation*}
    H(t,\theta,r) = F(r) + O(r^{n+2})
\end{equation*}
such that $\varphi = \psi_H^1$.
\end{lemma}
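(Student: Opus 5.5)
We write $\varphi = \varphi_0\circ\chi$ with $\chi := \varphi_0^{-1}\circ\varphi$. By \eqref{eq:birkhoff_normal_form_invariant_circle}, $\chi$ is an area-preserving embedding defined near $C$ and satisfying $\chi(\theta,r) = (\theta,r) + O(r^{n+1})$. The plan is to generate $\chi$ by a Hamiltonian isotopy whose Hamiltonian is $O(r^{n+2})$, and then concatenate it with the autonomous flow of $F$.

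\emph{Step 1: $\chi$ is exact.} Since $\chi$ fixes $C=\{r=0\}$ pointwise to order $n+1$, it maps $C$ to a circle $r = O(\cdot)$ agreeing with $C$. Concretely, $\chi(C)=C$, because $\varphi(C)=C=\varphi_0(C)$. The flux of $\chi$ through $C$ therefore vanishes. It follows that the closed $1$-form $\iota$ obtained from the standard generating-function construction is exact. Concretely, consider the $1$-form $\chi^*(r\,d\theta) - r\,d\theta$ on a neighborhood of $C$. It is closed because $\chi^*\tau=\tau$. Its integral over $C$ is zero, since $r=0$ on both $C$ and $\chi(C)=C$. It is therefore $dS$ for a function $S$, and since $\chi-\mathrm{id}=O(r^{n+1})$ we obtain $dS = O(r^{n+1})$. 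Normalizing $S|_C=0$ gives $S=O(r^{n+2})$.

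\emph{Step 2: isotopy to the identity with small Hamiltonian.} Rather than using $S$ directly, I would take the straight-line isotopy in a generating-function form. Since $\chi$ is $C^1$-close to the identity near $r=0$, it has a generating function of mixed type $(\Theta, r)\mapsto$ with $\theta' = \theta + \partial_{r'}G$, $r = r' + \partial_\theta G$ (or the standard type-two variant). Exactness ensures $G$ is single-valued on the annulus, and the $O(r^{n+1})$ estimate gives $G = O(r^{n+2})$. The maps $\chi_s$ generated by $sG$, $s\in[0,1]$, form a symplectic isotopy from $\mathrm{id}$ to $\chi$. Each $\chi_s$ fixes $C$ to order $n+1$, and the isotopy is generated by a time-dependent Hamiltonian $K_s = O(r^{n+2})$. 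This holds because $\partial_s\chi_s = O(r^{n+1})$ and $K_s$ vanishes on $C$ after normalization (again by exactness). We reparametrize in $s$ so that $K_s$ vanishes for $s$ near $0$ and $1$.

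\emph{Step 3: concatenation.} Define $H$ on $t\in[0,1/2]$ as $K$ run at double speed, and on $[1/2,1]$ as $2F$. The time-one map is then $\varphi_0\circ\chi=\varphi$. The smoothing cutoffs are arranged so that $H$ extends $1$-periodically. Here $H = F + O(r^{n+2})$ would fail on the first half if we concatenate in time. Instead, I would use the composition formula: $\psi^1_{F}\circ\psi^1_{K}$ is generated by $F(r) + K_t\circ(\psi^t_F)^{-1}$, where the inner flow is run simultaneously with the flow of $F$. This is a single time-$1$ Hamiltonian with $H = F + O(r^{n+2})$, since $\psi^t_F$ preserves $r$ and hence preserves the order of vanishing in $r$. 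Choosing $K_t$ to vanish near $t=0,1$ makes $H$ smooth and $1$-periodic.

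The main obstacle is Step 2: producing a generating Hamiltonian that is single-valued on the annulus and vanishes to order $n+2$. Single-valuedness uses the zero flux, which comes from $\chi(C)=C$. The order of vanishing must be tracked uniformly through the isotopy; the remaining steps are bookkeeping.
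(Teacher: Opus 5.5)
Your proof is correct and follows essentially the same route as the paper. Both arguments use a single-valued mixed-type generating function $G=O(r^{n+2})$ for $\varphi_0^{-1}\circ\varphi$, interpolate by $sG$ to obtain an isotopy with Hamiltonian $K_s=O(r^{n+2})$, and then take the composed isotopy $\psi_F^t\circ\chi_t$, generated by $F+K_t\circ(\psi_F^t)^{-1}$ and reparametrized to be constant near $t=0,1$. The differences are minor: you bound $K_s$ via $\partial_s\chi_s=O(r^{n+1})$ plus the normalization on $C$ rather than via the Hamilton--Jacobi equation, and you spell out why $G$ is single-valued, which the paper leaves implicit. Your Step 1 and the time-concatenation detour in Step 3 are not needed.
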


\begin{proof}
Set $\eta=\varphi_0^{-1}\circ\varphi$.
Then
\[
    \eta(\theta,r)=(\theta,r)+O(r^{n+1}).
\]
Near $\mathbb{T}\times \{0\}$, the diffeomorphism $\eta$ admits a generating function $W$ such that $(\Theta,R) = \eta(\theta,r)$ is characterized by
\[
    \Theta-\theta=\partial_2W(\Theta,r),
    \qquad
    R-r=-\partial_1W(\Theta,r).
\]
We normalize $W$ such that $W|_{\mathbb{T}\times \{0\}} = 0$. From
\[
    \partial_2W(\Theta,r)=\Theta-\theta=O(r^{n+1})
\]
we deduce that
\[
    W(\Theta,r)=O(r^{n+2}).
\]

For $t\in[0,1]$, let
\[
    W_t=tW,
\]
and let $\eta_t$ denote the corresponding isotopy of area-preserving
diffeomorphisms connecting $\operatorname{id}$ to $\eta$. Let $K_t$ be its generating Hamiltonian, normalized to vanish on $\mathbb{T}\times \{0\}$. The Hamilton--Jacobi equation gives
\[
    \partial_tW_t(\Theta,r)
    =
    K_t\bigl(\Theta,r-\partial_1W_t(\Theta,r)\bigr).
\]
Since $\partial_tW_t=W=O(r^{n+2})$ and
$\partial_1W_t=O(r^{n+2})$, it follows that
\[
    K_t(\theta,r)=O(r^{n+2}).
\]

Finally, consider the Hamiltonian isotopy $\psi_{F}^t\circ\eta_t$ with time-one map $\varphi_0\circ\eta=\varphi$. It is generated by the Hamiltonian
\[
    H(t,\theta,r)
    =
    F(r)+K_t\circ(\psi_{F}^t)^{-1}(\theta,r)
    =
    F(r)+K_t\bigl(\theta-tf(r),r\bigr).
\]
Consequently,
\[
    H(t,\theta,r)=F(r)+O(r^{n+2}).
\]
The Hamiltonian $H$ might not extend to be $1$-periodic in time, but this can be easily fixed by reparametrizing the isotopy $\eta_t$ such that it is constant near $t=0,1$.
\end{proof}

Now consider $T^2 \times (-\varepsilon,\varepsilon)$ with coordinates $(t,\theta,r)$ and equipped with the Hamiltonian structure
\begin{equation*}
    \omega' = d\theta \wedge dr + dH \wedge dt = d\theta \wedge dr - f(r) dt \wedge dr + O(r^{n+1}).
\end{equation*}
Note that, up to relabeling the coordinates, this $2$-form exactly has the desired normal form of identity~\eqref{eq:omega_birkhoff_normal_form}.

We observe that $T^2\times \{0\}$ is an invariant torus of the Hamiltonian structure $\omega'$. The annulus $\{0\} \times \mathbb{T} \times (-\varepsilon,\varepsilon)$ is a local Poincar\'e section of the characteristic foliation, with local first return map given by $\varphi = \psi_H^1$ near the invariant circle $\{0\} \times \mathbb{T} \times \{0\}$. This implies that a neighborhood of $T$ in $(M,\omega)$ is diffeomorphic to a neighborhood of $T^2\times \{0\}$ in $(T^2\times (-\varepsilon,\varepsilon),\omega')$. An explicit diffeomorphism can be constructed by first choosing vector fields on $(M,\omega)$ and $(T^2\times (-\varepsilon,\varepsilon),\omega')$ positively tangent to the respective characteristic foliations such that the local first return times of the respective local annular Poincar\'e sections $\Sigma$ and $\{0\}\times \T\times (-\varepsilon,\varepsilon)$ are identically equal to $1$. Recall that we have identified a neighborhood of $C$ in $\Sigma$ with $\T\times (-\varepsilon,\varepsilon)$. Using the flows of the chosen vector fields, this identification can be extended to a local diffeomorphism between the Hamiltonian structures $\omega$ and $\omega'$ near the respective invariant tori. We refer to \cite[Proposition 3.8]{Ed2014} for a more detailed exposition of an analogous argument.

We have thus constructed the desired Birkhoff normal form of $\omega$ near the invariant KAM torus $T$. This finishes the proof of Proposition~\ref{prop:Birkhoff_normal_form}.\qed

\begin{Remark}
\label{rem:twist_tori_and_circles}
We conclude this subsection by pointing out that $T$ is a twist KAM torus if and only if $C$ is a twist Diophantine invariant circle of $\varphi$. Indeed, by definition, the twist condition on $T$ says that
\begin{equation*}
    f(0) +\sqrt{-1} \qquad \text{and} \qquad f'(0)
\end{equation*}
are $\mathbb{R}$-linearly independent complex numbers. This is the case if and only if $f'(0)$ does not vanish. But $f'(0)$ is precisely the first Birkhoff invariant $b_1$ of the invariant circle $C$ of $\varphi$, so this is further equivalent to $C$ being a twist Diophantine invariant circle of $\varphi$.
\end{Remark}

\subsection{Robustness and stability of invariant tori}

The goal of this subsection is to deduce Theorems~\ref{thm:KAMrobust} and~\ref{thm:KAMstable} on the robustness and stability of twist KAM tori of Hamiltonian structures from Theorems~\ref{thm:perturbation_of_twist_diophantine_invariant_circles} and~\ref{thm:accumulation_twist_diophantine_invariant_circles} on the robustness and stability of twist Diophantine invariant circles.

We begin with the proof of Theorem~\ref{thm:KAMrobust}, whose statement we recall for convenience.

\begin{theorem*}[Theorem \ref{thm:KAMrobust}]

    Let $T \subset (M,\omega)$ be a twist KAM torus. Then, for every closed $2$-form $\tilde{\omega}$ cohomologous to $\omega$ and sufficiently $C^\infty$ close to $\omega$, there exists a twist KAM torus $\tilde{T}$ of $(M,\tilde{\omega})$ close to $T$ such that the rotation direction of $\tilde{T}$ agrees with the rotation direction of $T$ under the identification $\BP_+H_1(T;\BR) \cong \BP_+H_1(\tilde{T};\BR)$ induced by the isomorphisms $H_1(T;\BR) \cong H_1(N;\BR) \cong H_1(\tilde{T};\BR)$, where $N$ is a small tubular neighborhood of $T$ containing $\tilde{T}$. Moreover, the assignment $\tilde{\omega} \mapsto \tilde{T}$ is smooth and maps $\omega$ to $T$.
\end{theorem*}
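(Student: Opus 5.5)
The plan is to reduce the statement to Theorem~\ref{thm:perturbation_of_twist_diophantine_invariant_circles} by passing to a local annular Poincar\'e section, exactly as in Subsection~\ref{ss:Birkhoff}.

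\emph{Step 1: set-up.} Choose a global cross section circle $C\subset T$ of the linear foliation on $T$. Choose a thin embedded annulus $\Sigma$ meeting $T$ transversely in $C$ and transverse to $\ker\omega$. For $\tilde\omega$ sufficiently $C^\infty$-close to $\omega$, the annulus $\Sigma$ remains transverse to $\ker\tilde\omega$. The area form $\tilde\tau=\tilde\omega|_\Sigma$ is then $C^\infty$-close to $\tau$, and the local return map $\tilde\varphi:U\hookrightarrow\Sigma$ is defined on a fixed neighborhood $U$ of $C$ and is $C^\infty$-close to $\varphi$. By Remark~\ref{rem:twist_tori_and_circles}, $C$ is a twist Diophantine invariant circle of $\varphi$.

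\emph{Step 2: normalize the area form.} The forms $\tau$ and $\tilde\tau$ have nearby total areas on $U$, but they need not agree. Apply a Moser isotopy $g$ of $\Sigma$, $C^\infty$-close to the identity, with $g^*\tau=\tilde\tau$. This requires adjusting the annulus slightly, or equivalently rescaling in the radial coordinate on a fixed sub-annulus; this is harmless since everything is local. Then $g\circ\tilde\varphi\circ g^{-1}$ is $\tau$-area-preserving and $C^\infty$-close to $\varphi$.

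\emph{Step 3: exactness.} This is the main obstacle. I need to show that $g\tilde\varphi g^{-1}$ is exact in the sense preceding Theorem~\ref{thm:perturbation_of_twist_diophantine_invariant_circles}, i.e.\ that the flux of $\tilde\varphi(C)-C$ vanishes, and I would use cohomologous-ness here. Consider the torus $T'$ swept out by flowing $C$ along $\ker\tilde\omega$ until it returns to $\Sigma$, capped by the region $Z\subset\Sigma$ between $\tilde\varphi(C)$ and $C$. The flow tube is tangent to $\ker\tilde\omega$, so $\tilde\omega$ vanishes on it, and hence $\int_Z\tilde\tau=\int_{T'\cup Z}\tilde\omega$. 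The resulting closed surface $T'\cup Z$ is homologous to $T$ in a tubular neighborhood $N$. Since $\tilde\omega-\omega=d\kappa$ and $\int_T\omega=0$ (as $T$ is invariant), we get $\int_{T'\cup Z}\tilde\omega=\int_T\omega=0$. One must also check that conjugation by $g$ preserves this flux; I would arrange this by choosing $g$ supported appropriately or by tracking the flux directly.

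\emph{Step 4: conclude.} Theorem~\ref{thm:perturbation_of_twist_diophantine_invariant_circles} now gives a twist Diophantine invariant circle of $g\tilde\varphi g^{-1}$ with rotation number $\alpha$, depending smoothly on $\tilde\omega$. Pull it back by $g$ to obtain a circle $\tilde C$, and saturate $\tilde C$ by the characteristic flow of $\tilde\omega$ to get an invariant torus $\tilde T$ close to $T$. The foliation on $\tilde T$ has $\tilde C$ as a cross section with Diophantine rotation number $\alpha$. A circle diffeomorphism with Diophantine rotation number is smoothly conjugate to a rotation by Herman--Yoccoz, so $\tilde T$ is linear, and its rotation direction is the same class in $\mathbb{P}_+H_1(N;\mathbb{R})$ as that of $T$, determined by $[C]$ and $\alpha$. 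It remains to see that $\tilde T$ is twist. The first Birkhoff invariant varies continuously with the map, so it stays nonzero; by Remark~\ref{rem:twist_tori_and_circles}, $\tilde T$ is then twist. Smoothness of $\tilde\omega\mapsto\tilde T$ follows from smoothness of $\tilde\varphi$, of $g$ and of $\tilde C$ in $\tilde\omega$.
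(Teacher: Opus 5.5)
Your proposal is correct and follows the paper's proof essentially step by step: a local annular Poincar\'e section, Moser normalization of the induced area form, vanishing flux from the cycle formed by the characteristic annulus and $Z$ together with $\int_T\omega=0$, and then R\"ussmann's theorem followed by saturating $\tilde C$ by characteristics. The only difference is cosmetic: the paper extends the Moser map to a diffeomorphism $\psi$ of $M$ near the identity and replaces $\tilde\omega$ by the still-cohomologous form $\psi_*\tilde\omega$, so no separate flux check under conjugation is needed; in your version that check is immediate, since $g(Z)$ bounds $g\tilde\varphi g^{-1}(g(C))-g(C)$ and $\int_{g(Z)}\tau=\int_Z\tilde\tau=0$, and the flux does not depend on which essential circle is used.
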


\begin{proof}
    Let us fix an annular local Poincar\'e section $\Sigma$ intersecting $T$ transversely in a circle $C$. Let $\varphi : (U,\tau) \hookrightarrow (\Sigma,\tau)$ be the local first return map defined on some neighborhood $U$ of $C$ in $\Sigma$. Since $T$ is a twist Diophantine invariant torus, $C$ is a twist Diophantine invariant circle of $\varphi$, see Remark~\ref{rem:twist_tori_and_circles}.

    Consider a $C^\infty$ small cohomologous perturbation $\tilde\omega$ of $\omega$. We may reduce ourselves to the case that $\tilde\omega$ induces the same area form $\tau$ on $\Sigma$ as $\omega$. In order to see this, let $\tilde\tau$ be the area form on $\Sigma$ induced by $\tilde\omega$. Using Moser's path method, we can construct an area-preserving embedding $(U,\tilde\tau) \hookrightarrow (\Sigma,\tau)$ which is $C^\infty$ close to the inclusion of $U \subset \Sigma$. We extend this embedding to a diffeomorphism $\psi$ of $M$ which is $C^\infty$ close to the identity. Now simply replace $\tilde\omega$ by the pushforward $\psi_*\tilde\omega$. After possibly shrinking the local Poincar\'e section $\Sigma$, the perturbation $\tilde\omega$ then induces the area form $\tau$ on $\Sigma$.

    After the above reduction, the perturbation $\tilde{\omega}$ gives rise to a perturbed return map $\tilde{\varphi} : (U,\tau) \hookrightarrow (\Sigma,\tau)$. The assumption that $\tilde{\omega}$ is cohomologous to $\omega$ implies that $\tilde{\varphi}$ is an exact perturbation of $\varphi$. Indeed, let $Z$ be a $2$-chain in $\Sigma$ such that $\partial Z = \tilde\varphi(C) - C$. Let $A \subset M$ be the annulus tangent to the characteristic foliation of $\tilde\omega$ with boundary $\partial A = C - \tilde\varphi(C)$. Then $A + Z$ forms a $2$-cycle in $M$ which is homologous to $T$. We compute
    \begin{equation*}
        0 = \int_T \omega = \int_{Z+A} \tilde\omega = \int_Z \tau.
    \end{equation*}
    Here the first equality uses that $T$ is tangent to the characteristic foliation of $\omega$. The second equality uses that $\omega$ and $\tilde\omega$ are cohomologous and that $T$ and $Z+A$ are homologous. The last equality follows because $A$ is tangent to the characteristic foliation of $\tilde\omega$. But the identity $0 = \int_Z \tau$ precisely means that $\tilde\varphi$ is exact.

     We are therefore in a position to apply Theorem~\ref{thm:perturbation_of_twist_diophantine_invariant_circles}. We deduce that there exists a twist Diophantine invariant circle $\tilde{C}$ of $\tilde{\varphi}$ such that $\tilde{\varphi}|_{\tilde{C}}$ has the same rotation number as $\varphi|_C$ and such that $\tilde{C}$ depends smoothly on $\tilde{\varphi}$ and hence on $\tilde{\omega}$. The union of all characteristic leaves of $\tilde{\omega}$ intersecting $\tilde{C}$ forms the desired twist Diophantine invariant torus $\tilde{T}$ of $\tilde{\omega}$.
\end{proof}

Next, we turn to the proof of Theorem~\ref{thm:KAMstable}.

\begin{theorem*}[Theorem \ref{thm:KAMstable}]
    Suppose that $T \subset (M,\omega)$ is a twist KAM torus. Then $T$ is accumulated by twist KAM tori on both sides. More precisely, let $N$ be a tubular neighborhood of $T$ and let $N_\pm$ denote the two components of $N\setminus T$. Then there exist sequences $\left\{T_k^\pm\right\}$ of twist KAM tori contained in $N_\pm$, respectively, that converge to $T$ in the $C^\infty$ topology.
\end{theorem*}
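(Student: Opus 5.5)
The plan is to reduce to the surface statement, Theorem~\ref{thm:accumulation_twist_diophantine_invariant_circles}, using an annular local Poincar\'e section, exactly as in the proof of Theorem~\ref{thm:KAMrobust}.

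\textbf{Setting up the section.} Fix an annulus $\Sigma$ transverse to the characteristic foliation and meeting $T$ transversely in a circle $C$. Let $\varphi:(U,\tau)\hookrightarrow(\Sigma,\tau)$ be the local first return map. By Remark~\ref{rem:twist_tori_and_circles}, $C$ is a twist Diophantine invariant circle of $\varphi$. Theorem~\ref{thm:accumulation_twist_diophantine_invariant_circles} then gives twist Diophantine invariant circles $C_k^\pm$ of $\varphi$ on both sides of $C$ in $\Sigma$ accumulating on $C$.

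\textbf{Sweeping out tori.} For each $k$, let $T_k^\pm$ be the union of the characteristic leaves through $C_k^\pm$. This is an embedded torus: flowing $C_k^\pm$ until the first return traces out an annulus, and the annulus closes up because $\varphi(C_k^\pm)=C_k^\pm$. Since $\Sigma\setminus C$ has two components and the flow near $T$ maps each side of $T$ to itself, $T_k^\pm$ lies in $N_\pm$ once $C_k^\pm$ is close to $C$.

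\textbf{$T_k^\pm$ are twist KAM tori.} The foliation on $T_k^\pm$ is the suspension of $\varphi|_{C_k^\pm}$. This circle map is smoothly conjugate to a Diophantine rotation (the Herman--Yoccoz theorem quoted before Theorem~\ref{thm:birkhoff_normal_form_invariant_circle}), so $T_k^\pm$ is linear. Its rotation direction is determined by the rotation number of $\varphi|_{C_k^\pm}$ through a fixed integral change of basis coming from the section, so it is Diophantine. Here I would need to check that Diophantineness of a number $\alpha$ corresponds to Diophantineness of a direction such as $(\alpha,1)$ up to $\operatorname{SL}(2,\mathbb{Z})$; this is standard. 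The twist property transfers back by Remark~\ref{rem:twist_tori_and_circles}, applied with $C_k^\pm$ in place of $C$ using the same construction of the Birkhoff normal form.

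\textbf{Main obstacle: $C^\infty$ convergence.} Theorem~\ref{thm:accumulation_twist_diophantine_invariant_circles} only asserts accumulation, and $C^\infty$ convergence of $T_k^\pm$ to $T$ requires $C^\infty$ convergence of $C_k^\pm$ to $C$. I would get this from Moser's theorem in the Birkhoff normal form coordinates of Theorem~\ref{thm:birkhoff_normal_form_invariant_circle} with large $n$. There $\varphi$ is an integrable twist map $(\theta,r)\mapsto(\theta+f(r),r)$ plus a perturbation of order $O(r^{n+1})$. After rescaling $r=\delta s$, the perturbation on a strip of width $\delta$ is $C^\infty$ small as $\delta\to0$. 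The KAM tori produced then have the form $r=\delta(s_0+u_\delta(\theta))$ with $u_\delta\to0$ in $C^\infty$, and choosing Diophantine rotation numbers $f(r_k)$ with $r_k\to0^\pm$ yields circles converging smoothly to $C$. Alternatively, and more simply, I would cite \cite[Thm.~1]{fk09}, which directly gives accumulation in the $C^\infty$ topology. Finally, flowing along a smooth vector field spanning the characteristic line field, with smooth return times, transfers $C^\infty$ convergence of the circles to $C^\infty$ convergence of the tori.
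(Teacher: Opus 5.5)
Your proposal is correct and matches the paper's proof: take an annular local Poincar\'e section, apply Theorem~\ref{thm:accumulation_twist_diophantine_invariant_circles} to get twist Diophantine circles $C_k^\pm$ on both sides of $C$, and sweep them out by the characteristic foliation to obtain $T_k^\pm$. The paper leaves several points implicit, and your checks of them are sound: that the $T_k^\pm$ are embedded and lie on the correct side, that their rotation direction is Diophantine, that twist transfers via Remark~\ref{rem:twist_tori_and_circles}, and that convergence holds in $C^\infty$ (via Birkhoff normal form rescaling or \cite{fk09}).
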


\begin{proof}
    Let $\Sigma$ be an annular local Poincar\'e section intersecting $T$ transversely in a circle $C$, and let $\varphi : (U,\tau) \hookrightarrow (\Sigma,\tau)$ be the local first return map. The circle $C$ is a twist Diophantine invariant circle of $\varphi$, see Subsection~\ref{ss:Birkhoff}. By Theorem~\ref{thm:accumulation_twist_diophantine_invariant_circles}, the circle $C$ is accumulated on both sides by twist Diophantine invariant circles. We can pick sequences $\left\{C_k^\pm\right\}$ of such invariant circles converging to $C$ from both sides. Define $T_k^\pm$ to be the union of all characteristics intersecting $C_k^\pm$. This defines sequences of twist Diophantine invariant tori converging to $T$ from both sides.
\end{proof}

\bibliographystyle{amsplain}
\bibliography{biblio}

@article{AGZ22,
  author  = {Albers, Peter and Geiges, Hansj{\"o}rg and Zehmisch, Kai},
  title   = {Pseudorotations of the 2-disc and {Reeb} flows on the 3-sphere},
  journal = {Ergodic Theory and Dynamical Systems},
  volume  = {42},
  number  = {2},
  year    = {2022},
  pages   = {402--436},
  doi     = {10.1017/etds.2021.15}
}

@article{FlHr25,
  author  = {Florio, Anna and Hryniewicz, Umberto},
  title   = {Quantitative conditions for right-handedness of flows},
  journal = {Annales Scientifiques de l'\'Ecole Normale Sup\'erieure},
  volume  = {58},
  number  = {4},
  year    = {2025},
  pages   = {899--941},
  doi     = {10.24033/asens.2620}
}

@article{HT,
  author={Hutchings, Michael and Taubes, Clifford Henry},
  title={{The Weinstein conjecture for stable Hamiltonian structures}},
  journal={Geometry \& Topology},
  volume={13},
  number={2},
  pages={901--941},
  year={2009}
}

@article{GG,
  author={Ginzburg, Viktor L. and G{\"u}rel, Ba{\c s}ak Z.},
  title={{Fragility and persistence of leafwise intersections}},
  journal={Mathematische Zeitschrift},
  volume={280},
  number={3-4},
  pages={989--1004},
  year={2015},
  doi={10.1007/s00209-015-1459-y}
}

@article{C,
  title={Stability is not open or generic in symplectic four-manifolds},
  author={Cardona, Robert},
  journal={arXiv preprint arXiv:2305.13158},
  year={2023}
}

@article{CR,
  title={{Periodic orbits and Birkhoff sections of stable Hamiltonian structures}},
  author={Cardona, Robert and Rechtman, Ana},
  journal={Journal de l’{\'E}cole polytechnique—Math{\'e}matiques},
  volume={12},
  pages={235-286},
  year={2025}
}

@article{CV,
  title={{First steps in stable Hamiltonian topology}},
  author={Cieliebak, Kai and Volkov, Evgeny},
  journal={Journal of the European Mathematical Society},
  volume={17},
  number={2},
  pages={321--404},
  year={2015}
}

@article{CV2,
  title={{A note on the stationary Euler equations of hydrodynamics}},
  author={Cieliebak, Kai and Volkov, Evgeny},
  journal={Ergodic Theory and Dynamical Systems},
  volume={37},
  number={2},
  pages={454--480},
  year={2017},
  publisher={Cambridge University Press}
}

@article{CGPZ,
  title={{Periodic Floer homology and the smooth closing lemma for area-preserving surface diffeomorphisms}},
  author={Cristofaro-Gardiner, Dan and Prasad, Rohil and Zhang, Boyu},
  journal={arXiv preprint arXiv:2110.02925},
  year={2021}
}

@article{CGPPZ,
  title={{A note on U-cyclic elements in monopole Floer homology}},
  author={Cristofaro-Gardiner, Dan and Pomerleano, Daniel and Prasad, Rohil and Zhang, Boyu},
  journal={Proceedings of the American Mathematical Society, Series B},
  volume={12},
  pages={218--228},
  year={2025},
  doi={10.1090/bproc/146}
}

@article{EH,
  title={{PFH spectral invariants and $C^\infty$ closing lemmas}},
  author={Edtmair, Oliver and Hutchings, Michael},
  journal={arXiv preprint arXiv:2110.02463},
  year={2021}
}

@article{Ed2014,
  title={Disk-like surfaces of section and symplectic capacities},
  author={Edtmair, Oliver},
  journal={Geometric and Functional Analysis},
  volume={34},
  number={5},
  pages={1399--1459},
  year={2024},
  publisher={Springer}
}

@article{FH,
  title={Almost existence from the feral perspective and some questions},
  author={Fish, Joel W. and Hofer, Helmut H. W.},
  journal={Ergodic Theory and Dynamical Systems},
  volume={42},
  number={2},
  pages={792--834},
  year={2022},
  publisher={Cambridge University Press}
}

@article{Gir,
  title={Convexit{\'e} en topologie de contact},
  author={Giroux, Emmanuel},
  journal={Commentarii Mathematici Helvetici},
  volume={66},
  pages={637--677},
  year={1991},
  publisher={Springer}
}

@article{Ir,
  title={Dense existence of periodic {R}eeb orbits and {ECH} spectral invariants},
  author={Irie, Kei},
  journal={Journal of Modern Dynamics},
  volume={9},
  number={1},
  pages={357--363},
  year={2015},
  publisher={Journal of Modern Dynamics}
}

@article{Ir2,
  title={Equidistributed periodic orbits of ${C}^\infty$-generic three-dimensional {R}eeb flows},
  author={Irie, Kei},
  journal={Journal of Symplectic Geometry},
  volume={19},
  number={3},
  pages={531--566},
  year={2021},
  publisher={International Press of Boston}
}

@article{PP,
  title={Generic equidistribution for area-preserving diffeomorphisms of compact surfaces with boundary},
  author={Pirnapasov, Abror and Prasad, Rohil},
  journal={Revista Matem{\'a}tica Iberoamericana},
  volume={41},
  number={3},
  pages={1101--1128},
  year={2025}
}

@article{Re,
  title={Existence of periodic orbits for geodesible vector fields on closed 3-manifolds},
  author={Rechtman, Ana},
  journal={Ergodic Theory and Dynamical Systems},
  volume={30},
  number={6},
  pages={1817--1841},
  year={2010},
  publisher={Cambridge University Press}
}

@article{Rob,
  title={{Generic properties of conservative systems II}},
  author={Robinson, R Clark},
  journal={American Journal of Mathematics},
  volume={92},
  number={4},
  pages={897--906},
  year={1970},
  publisher={JSTOR}
}

@article{P,
  title={Generic equidistribution of periodic orbits for area-preserving surface maps},
  author={Prasad, Rohil},
  journal={International Mathematics Research Notices},
  volume={2024},
  number={24},
  pages={14802--14834},
  year={2024},
  doi={10.1093/imrn/rnac340},
  publisher={Oxford University Press}
}

@article{Y,
  title={{Travaux de Herman sur les tores invariants}},
  author={Yoccoz, Jean-Christophe},
  journal={S{\'e}minaire Bourbaki},
  volume={1991},
  pages={92},
  year={1992}
}

@article{CG1,
  title={Nondensity results in high-dimensional stable {Hamiltonian} topology},
  author={Cardona, Robert and Gironella, Fabio},
  journal={Journal of the London Mathematical Society},
  volume={111},
  number={4},
  pages={e70143},
  year={2025},
  publisher={Wiley Online Library}
}

@article{AI,
  title={A {$C^\infty$} closing lemma for {H}amiltonian diffeomorphisms of closed surfaces},
  author={Asaoka, Masayuki and Irie, Kei},
  journal={Geometric and Functional Analysis},
  volume={26},
  number={5},
  pages={1245--1254},
  year={2016},
  publisher={Springer}
}

@book{HZ,
  title={Symplectic invariants and {H}amiltonian dynamics},
  author={Hofer, Helmut and Zehnder, Eduard},
  year={2012},
  publisher={Birkh{\"a}user}
}

@phdthesis{ReTh,
  title={Use and disuse of plugs in foliations},
  author={Rechtman, Ana},
  year={2009},
  school={Ecole normale sup{\'e}rieure de Lyon - ENS Lyon}
}

@article{Herm1,
  title={{Exemples de flots hamiltoniens dont aucune perturbation en topologie $C^{\infty}$ n'a d'orbites p{\'e}riodiques sur un ouvert de surfaces d'{\'e}nergies}},
  author={Herman, M-R},
  journal={CR Acad. Sci. Paris S{\'e}r. I Math.},
  volume={312},
  pages={989--994},
  year={1991}
}

@article{Smale,
  title={Mathematical problems for the next century},
  author={Smale, Steve},
  journal={The mathematical intelligencer},
  volume={20},
  number={2},
  pages={7--15},
  year={1998},
  publisher={Springer}
}

@article{Herm2,
  title={{Diff{\'e}rentiabilit{\'e} optimale et contre-exemples {\`a} la fermeture en topologie $C^\infty$ des orbites r{\'e}currentes de flots hamiltoniens}},
  author={Herman, Michael-R},
  journal={Comptes rendus de l'Acad{\'e}mie des sciences. S{\'e}rie 1, Math{\'e}matique},
  volume={313},
  number={1},
  pages={49--51},
  year={1991}
}

@article{Pr,
  title={Periodic points of rational area-preserving homeomorphisms},
  author={Prasad, Rohil},
  journal={Ergodic Theory and Dynamical Systems},
  volume={45},
  number={9},
  pages={2890--2907},
  year={2025},
  doi={10.1017/etds.2025.10}
}

@article{GLP,
  title={Area preserving homeomorphisms of surfaces with rational rotational direction},
  author={Guih{\'e}neuf, Pierre-Antoine and Le Calvez, Patrice and Passeggi, Alejandro},
  journal={Annales Henri Lebesgue},
  volume={8},
  pages={329--372},
  year={2025},
  doi={10.5802/ahl.237}
}

@article{B2,
  author  = {Birkhoff, George D.},
  title   = {Note sur la stabilit\'e en Dynamique},
  journal = {J. Math. Pures Appl.},
  volume  = {15},
  year    = {1936},
  pages   = {339--344}
}

@article{fk09,
  author  = {Fayad, Bassam and Krikorian, Rapha\"el},
  title   = {Herman's last geometric theorem},
  journal = {Ann. Sci. \'Ecole Norm. Sup. (4)},
  volume  = {42},
  number  = {2},
  year    = {2009},
  pages   = {193--219}
}

@article{H8,
  author  = {Herman, Michael R.},
  title   = {Sur la conjugaison diff\'erentiable des diff\'eomorphismes du cercle \`a des rotations},
  journal = {Publ. Math. Inst. Hautes \'Etudes Sci.},
  volume  = {49},
  year    = {1979},
  pages   = {5--233}
}

@article{Mo1,
  author  = {Moser, J\"urgen},
  title   = {On invariant curves of area-preserving mappings of an annulus},
  journal = {Nachr. Akad. Wiss. G\"ottingen Math.-Phys. Kl. II},
  year    = {1962},
  pages   = {1--20}
}

@article{R1,
  author  = {R\"ussmann, Helmut},
  title   = {{Kleine Nenner I: \"Uber invariante Kurven differenzierbarer Abbildungen eines Kreisringes}},
  journal = {Nachr. Akad. Wiss. G\"ottingen Math.-Phys. Kl. II},
  year    = {1970},
  pages   = {67--105}
}

@article{Y2,
  author  = {Yoccoz, Jean-Christophe},
  title   = {Conjugaison diff\'erentiable des diff\'eomorphismes du cercle dont le nombre de rotation v\'erifie une condition diophantienne},
  journal = {Ann. Sci. \'Ecole Norm. Sup. (4)},
  volume  = {17},
  year    = {1984},
  pages   = {333--359}
}

@incollection{yoc92,
  author    = {Yoccoz, Jean-Christophe},
  title     = {Travaux de {Herman} sur les tores invariants},
  booktitle = {S\'eminaire Bourbaki, Vol. 1991/92, Expos\'es 745--759},
  series     = {Ast\'erisque},
  number     = {206},
  year       = {1992},
  pages      = {311--344},
  publisher  = {Soci\'et\'e math\'ematique de France},
  note       = {Expos\'e no.~754, 34 pp.}
}

@article{Gotay,
  author={Gotay, Mark J.},
  title={{On coisotropic imbeddings of presymplectic manifolds}},
  journal={Proceedings of the American Mathematical Society},
  volume={84},
  number={1},
  pages={111--114},
  year={1982},
  doi={10.1090/S0002-9939-1982-0633290-X}
}

@article{CTdL,
  title={Contact type solutions and non-mixing of the 3D Euler equations},
  author={Cardona, Robert and de Lizaur, Francisco Torres},
  journal={arXiv preprint arXiv:2312.03514},
  year={2023}
}

@article{Sch,
  title={Asymptotic cycles},
  author={Schwartzman, Sol},
  journal={Annals of Mathematics},
  volume={66},
  number={2},
  pages={270--284},
  year={1957},
  publisher={JSTOR}
}

@article{Z,
    title={Generic density of periodic orbits of area-preserving maps on punctured surfaces},
    author={Zhou, Shaoyang},
    journal={arXiv preprint arXiv:2411.15429},
    year={2024}
    }

\Addresses

\end{document}